\documentclass[12pt]{amsart}

\usepackage{amsmath}
\usepackage{amsfonts}
\usepackage{amssymb}
\usepackage{latexsym}

\usepackage[all]{xy}

\textwidth=16.5cm \textheight=22.5cm \hoffset=-17mm \voffset=-16mm
\headheight=14pt \headsep=22pt
\usepackage{amsmath}
\usepackage{amsfonts}
\usepackage{amssymb}
\usepackage{latexsym}
\usepackage[all]{xy}
\newtheorem{theorem}{Theorem}[section]
\newtheorem{proposition}[theorem]{Proposition}
\newtheorem{corollary}[theorem]{Corollary}

\theoremstyle{remark}
\newtheorem{remark}[theorem]{Remark}
\theoremstyle{definition}

\numberwithin{equation}{section}

\def\sD{{\mathfrak D}}      
   \def\sH{{\mathfrak H}}   
   \def\sK{{\mathfrak K}}   
\def\sM{{\mathfrak M}}   \def\sN{{\mathfrak N}}

      \def\dC{{\mathbb C}}
\def\dD{{\mathbb D}}

   \def\dN{{\mathbb N}}   
      
   \def\dT{{\mathbb T}}

   \def\cH{{\mathcal H}}   
   \def\cK{{\mathcal K}}   \def\cL{{\mathcal L}}
\def\cM{{\mathcal M}}      
\def\cP{{\mathcal P}}

\def\bL{{\mathbf L}}
\def\bS{{\mathbf S}}

\def\clos{{\rm clos\,}}
\def\RE{{\rm Re\,}}

\def\wt{\widetilde}
\def\wh{\widehat}

\def\f{\varphi}
\def\uphar{{\upharpoonright\,}}

\def\ran{{\rm ran\,}}
\def\dom{{\rm dom\,}}
\def\cran{{\rm \overline{ran}\,}}
\def\cspan{{\rm \overline{span}\, }}
\begin{document}
\title
 {Schur parameters, Toeplitz matrices, and Kre\u{\i}n shorted operators}

\author{
Yury~Arlinski\u{i}}
\address{Department of Mathematical Analysis \\
East Ukrainian National University \\
Kvartal Molodyozhny 20-A \\
Lugansk 91034 \\
Ukraine} \email{yury$\_$arlinskii@yahoo.com}
 \subjclass
 {47A48, 47A56, 47A57, 47A64, 47B35, 47N70, 93B05, 93B07}

\keywords{Contraction, conservative system, transfer function,
realization, Schur class function, Schur parameters, Toeplitz
matrix, Kre\u{\i}n shorted operator}

\thispagestyle{empty}

\begin{abstract}
We establish connections between Schur parameters of the Schur class
operator-valued functions, the corresponding simple conservative
realizations, lower triangular Toeplitz matrices, and Kre\u\i n
shorted operators. By means of Schur parameters or shorted operators
for defect operators of Toeplitz matrices necessary and sufficient
conditions for a simple conservative discrete-time system to be
controllable/observable and for a completely non-unitary contraction
to be completely non-isometric/completely non-co-isometric are
obtained. For the Schur problem a characterization of central
solution and uniqueness criteria to the solution are given in terms
of shorted operators for defect operators of contractive Toeplitz
matrices, corresponding to data.
\end{abstract}
\maketitle

\section{Introduction}
In this Section we briefly describe notations, the basic objects,
and the main goal of this paper.
\subsection{Notations}
In what follows the class of all continuous linear operators defined
on a complex Hilbert space $\sH_1$ and taking values in a complex
Hilbert space $\sH_2$ is denoted by $\bL(\sH_1,\sH_2)$ and
${\bL}(\sH):= {\bL}(\sH,\sH)$. All infinite dimensional Hilbert
spaces are supposed to be separable. We denote by $I$ the identity
operator in a Hilbert space and by $P_\cL$ the orthogonal projection
onto the subspace (the closed linear manifold) $\cL$. The notation
$T\uphar \cL$ means the restriction of a linear operator $T$ on the
set $\cL$. The range and the null-space of a linear operator $T$ are
denoted by $\ran T$ and $\ker T$, respectively. We use the usual
symbols $\dC$, $\dN$, and $\dN_0$ for the sets of complex numbers,
positive integers, and nonnegative integers, respectively. The
\textit{Schur class} ${\bf S}(\sH_1,\sH_2)$ is the set of all
function $\Theta(\lambda)$ analytic on the unit disk
$\dD=\{\lambda\in\dC:|\lambda|<1\}$ with values in
$\bL(\sH_1,\sH_2)$ and such that $\|\Theta(\lambda)\|\le 1$ for all
$\lambda\in\dD$. An operator $T\in\bL(\sH_1,\sH_2)$ is said to be
\begin{itemize}
\item \textit{contractive} if $\|T\|\le 1$;

\item \textit{isometric} if $\|Tf\|=\|f\|$ for all $f\in \sH_1$
$\iff T^*T=I$;

\item \textit{co-isometric} if $T^*$ is isometric $\iff
TT^*=I$;
\item \textit{unitary} if it is both isometric and co-isometric.
\end{itemize}
Given a contraction $T\in \bL(\sH_1,\sH_2)$, the operators
$
D_T:=(I-T^*T)^{1/2}$ and $D_{T^*}:=(I-TT^*)^{1/2}
$
are called the \textit{defect operators} of $T$, and the subspaces
$\sD_T=\cran D_T,$ $\sD_{T^*}=\cran D_{T^*}$ the \textit{defect
subspaces} of $T$. 
The defect operators
satisfy the following relations
$ TD_{T}=D_{T^*}T,$ $T^*D_{T^*}=D_{T}T^*.$

\subsection{The Schur algorithm}
Given a scalar Schur class function $f(\lambda)$, which is not a finite Blaschke
product, define inductively
\[
f_0(\lambda)=f(\lambda),\;
f_{n+1}(\lambda)=\frac{f_n(\lambda)-f_n(0)}{\lambda(1-\overline{f_n(0)}f_n(\lambda))},\;
n\in\dN_0.
\]
It is clear that  $\{f_n\}$ is an {\it infinite} sequence of Schur
functions called the \textit{associated functions} and neither of
its terms is a finite Blaschke product. The numbers
$\gamma_n:=f_n(0)$ are called the {\it Schur parameters}.
Note that
\[
f_n(\lambda)=\frac{\gamma_n+\lambda
f_{n+1}(\lambda)}{1+\bar\gamma_n\lambda
f_{n+1}}=\gamma_n+(1-|\gamma_n|^2)\frac{\lambda
f_{n+1}(\lambda)}{1+\bar\gamma_n\lambda f_{n+1}(\lambda)},\;
n\in\dN_0.
\]
The method of labeling $f\in{\bf S}$ by its Schur parameters is
known as the \textit{Schur algorithm} and is due to I.~Schur
\cite{Schur}. In the case when $f$
is a finite Blaschke product of order $N$, the Schur algorithm
terminates at the $N$-th step, i.e., the sequence of Schur parameters
$\{\gamma_n\}_{n=0}^N$ is finite, $|\gamma_n|<1$ for
$n=0,1,\ldots,N-1$, and $|\gamma_N|=1$.

The next theorem goes back to Shmul'yan \cite{Shmul1, Shmul2} and
T.~Constantinescu \cite{Const} (see also \cite{ARL1, BC, Const2,
DGK, DFK}) and plays a key role in the Schur algorithm for
operator-valued functions.
\begin{theorem}
Let $\sM$ and $\sN$ be Hilbert spaces and let the function
$\Theta(\lambda)$ be from the Schur class ${\bf S}(\sM,\sN).$ Then
there exists a function $Z(\lambda)$ from the Schur class ${\bf
S}(\sD_{\Theta(0)},\sD_{\Theta^*(0)})$ such that
\begin{equation}
\label{MREP}
\Theta(\lambda)=\Theta(0)+D_{\Theta^*(0)}Z(\lambda)(I+\Theta^*(0)Z(\lambda))^{-1}D_{\Theta(0)},\;\lambda\in\dD.
\end{equation}
\end{theorem}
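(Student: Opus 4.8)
The plan is to fix $\lambda\in\dD$, abbreviate $A:=\Theta(0)$ and $T:=\Theta(\lambda)$ (both contractions in $\bL(\sM,\sN)$), and to read \eqref{MREP} as a purely operator-theoretic assertion: for every contraction $T$ there is a contraction $Z\in\bL(\sD_A,\sD_{A^*})$ with $T=A+D_{A^*}Z(I+A^*Z)^{-1}D_A$; afterwards I would restore the analytic dependence $\lambda\mapsto Z(\lambda)$. The scalar case $A=\gamma$, $|\gamma|<1$, is the guide: there \eqref{MREP} collapses to the disc automorphism $T=(\gamma+z)/(1+\bar\gamma z)$, whose inverse $z=(T-\gamma)/(1-\bar\gamma T)$ produces $Z$, and the operator task is to mimic this inverse while avoiding the unbounded factors $D_A^{-1}$, $D_{A^*}^{-1}$. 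Note first that $A^*$ maps $\sD_{A^*}$ into $\sD_A$ and $A$ maps $\sD_A$ into $\sD_{A^*}$ (from $A D_A=D_{A^*}A$), so $I+A^*Z$ is a genuine operator on $\sD_A$, and the identity $\|A^*w\|^2=\|w\|^2-\|D_{A^*}w\|^2$ shows it is injective there.

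For the pointwise statement I would establish the two algebraic identities
\[
I-A^*T=D_A(I+A^*Z)^{-1}D_A,
\]
\[
I-T^*T=D_A(I+Z^*A)^{-1}(I-Z^*Z)(I+A^*Z)^{-1}D_A,
\]
verified by direct block manipulation (and checked against the scalar computation $1-|T|^2=(1-|\gamma|^2)(1-|z|^2)/|1+\bar\gamma z|^2$). The first identity, with its dual for $I-TT^*$, pins down that exactly the inverses on the defect spaces are what the formula needs; the second shows that a contractive $Z$ yields a contractive $T$, and, read backwards, that $I-T^*T\ge0$ forces $I-Z^*Z\ge0$ because $(I+A^*Z)^{-1}D_A$ has dense range in $\sD_A$. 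For the \emph{existence} of $Z$ given $T$, I would deduce from $\|A\|\le1$, $\|T\|\le1$ the range inclusions $\ran(T-A)\subseteq\ran D_{A^*}$ and $\ran(T-A)^*\subseteq\ran D_A$; Douglas' factorization lemma then produces a bounded $Y\in\bL(\sD_A,\sD_{A^*})$ with $T-A=D_{A^*}YD_A$, and I would set $Z:=Y(I-A^*Y)^{-1}$. One checks $(I+A^*Z)^{-1}=I-A^*Y$, so $Y=Z(I+A^*Z)^{-1}$ and hence $T-A=D_{A^*}Z(I+A^*Z)^{-1}D_A$, while the second identity gives $\|Z\|\le1$. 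The main obstacle sits precisely here: the boundedness of $(I+A^*Z)^{-1}$ and of $(I-A^*Y)^{-1}$ on $\sD_A$ when $\|A\|=1$ (so $D_A$ has no spectral gap), together with the two range inclusions — this is the genuine content of the Shmul'yan--Constantinescu lemma.

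Finally I would recover analyticity. The function $Z(\lambda)$ is now defined on all of $\dD$ with $\|Z(\lambda)\|\le1$ and $Z(0)=0$; being uniformly bounded, its holomorphy reduces to weak holomorphy, so it suffices that $\lambda\mapsto\langle Z(\lambda)D_Au,D_{A^*}v\rangle$ be holomorphic for $u\in\sM$, $v\in\sN$. Near $\lambda=0$ one has $\|T-A\|\to0$, hence $\|Z(\lambda)\|\to0$, so $(I+A^*Z(\lambda))^{-1}=\sum_{k\ge0}(-A^*Z(\lambda))^k$ converges in norm and exhibits $Z(\lambda)$ as a norm-convergent series in the holomorphic datum $\Theta(\lambda)-A$, giving holomorphy in a neighbourhood of $0$. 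To propagate it to all of $\dD$ I would use the two displayed identities: on the dense ranges of $D_A$ and $D_{A^*}$ the matrix elements of $Z(\lambda)$ are recovered from the holomorphic families $\Theta(\lambda)-A$ and $I-A^*\Theta(\lambda)$ through the bounded, holomorphic inverse $(I+A^*Z(\lambda))^{-1}$ on $\sD_A$. The subtlety to keep in mind is that $I-A^*\Theta(\lambda)$ need not be invertible on all of $\sM$ — for instance $\Theta=\mathrm{diag}(\lambda,c)$ with $|c|=1$ forces a kernel in the direction orthogonal to $\sD_A$ — so the whole argument must be run \emph{inside} the defect subspaces, where the operators involved stay invertible; with weak holomorphy established, $Z\in{\bf S}(\sD_A,\sD_{A^*})$ and \eqref{MREP} holds by construction.
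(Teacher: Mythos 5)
Your reduction of \eqref{MREP} to a pointwise statement about an arbitrary pair of contractions is where the argument breaks. The assertion ``for every contraction $T$ there is a contraction $Z\in\bL(\sD_A,\sD_{A^*})$ with $T=A+D_{A^*}Z(I+A^*Z)^{-1}D_A$'' is false: take $\sM=\sN=\dC$, $A=1$, $T=0$; then $D_A=D_{A^*}=0$ and the right-hand side can only equal $A$. The same example refutes your claimed deduction of the range inclusions $\ran(T-A)\subseteq\ran D_{A^*}$ and $\ran(T-A)^*\subseteq\ran D_A$ from $\|A\|\le1$, $\|T\|\le1$ alone: here $T-A=-1$ while $\ran D_{A^*}=\{0\}$. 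These inclusions, together with the norm control that makes your Douglas-factorization step and the invertibility of $I-A^*Y$ work, are not consequences of contractivity of the two values; they encode precisely the two-point Schwarz--Pick positivity of the kernel $K(\mu,\lambda)=(I-\Theta(\mu)^*\Theta(\lambda))/(1-\bar\mu\lambda)$ at the points $0$ and $\lambda$ of a \emph{single} Schur-class function --- and that analytic input never enters your existence argument (you invoke analyticity only a posteriori, for holomorphy of $Z$). You concede this yourself when you write that the boundedness of the inverses and the range inclusions are ``the genuine content of the Shmul'yan--Constantinescu lemma'': that is the theorem being proved, so deferring it leaves the proof empty at its core. Two further steps would fail even if the inclusions were granted: Douglas' lemma gives only $T-A=D_{A^*}X$, and the second factorization needs $\ran X^*\subseteq\ran D_A$, which does not follow from $\ran(T-A)^*\subseteq\ran D_A$ since $(T-A)^*=X^*D_{A^*}$; and your holomorphy bootstrap near $0$ presupposes $\|Z(\lambda)\|\to0$, inferred from $\|\Theta(\lambda)-A\|\to0$, but smallness of $D_{A^*}Y(\lambda)D_A$ does not control $\|Y(\lambda)\|$ or $\|Z(\lambda)\|$ when the defect operators are not boundedly invertible.

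For the record, the paper does not prove this theorem --- it cites Shmul'yan and Constantinescu --- but its own machinery yields a complete proof that bypasses Douglas factorization entirely, and it is instructive to compare. Realize $\Theta$ as the transfer function of a simple conservative system and write the unitary system operator in the form of Theorem~\ref{ParContr1}(iii) with parameters $F$, $G$, $L$; unitarity gives $B=FD_{\Gamma_0}$ and $C=D_{\Gamma_0^*}G$ with $F$, $G^*$ isometric on the defect spaces, so $\Theta(\lambda)-\Gamma_0=\lambda D_{\Gamma_0^*}G(I-\lambda A)^{-1}FD_{\Gamma_0}$ is \emph{born} factored through the defects, with a manifestly holomorphic middle term. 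Defining $\Theta_1$ as the transfer function of the conservative system $\zeta_1$ of \eqref{newzeta} (cf.\ Theorem~\ref{Schuriso}) and $Z(\lambda):=\lambda\Theta_1(\lambda)$, one verifies \eqref{MREP} by a direct resolvent computation; then $Z$ is automatically in the Schur class, $\|Z(\lambda)\|\le|\lambda|<1$ makes $(I+\Gamma_0^*Z(\lambda))^{-1}$ a norm-convergent Neumann series, and holomorphy is immediate. Your two displayed algebraic identities are correct (and are exactly the right verification tool once $Z$ exists), but they cannot substitute for this construction.
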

The representation \eqref{MREP} of a function $\Theta(\lambda)$ from
the Schur class is called \textit{the M\"obius representation} of
$\Theta(\lambda)$ and the function $Z(\lambda)$ is called
\textit{the M\"obius parameter} of $\Theta(\lambda)$. Clearly,
$Z(0)=0$ and from Schwartz's lemma one obtains that
\[
\lambda^{-1}Z(\lambda)\in\bS(\sD_{\Theta(0)},\sD_{\Theta^*(0)}).
\]
\textit{The operator Schur's algorithm} \cite{BC}. For
$\Theta\in{\bf S}(\sM,\sN)$ put
$\Theta_0(\lambda)=\Theta(\lambda)$ and let $Z_0(\lambda)$ be the
M\"obius parameter of $\Theta$. Define
\[
\Gamma_0=\Theta(0),\;
\Theta_1(\lambda)=\lambda^{-1}Z_0(\lambda)\in {\bf
S}(\sD_{\Gamma_0},\sD_{\Gamma^*_0}),\;\Gamma_1= \Theta_1(0)=Z'_0(0).
\]
If $\Theta_0(\lambda),\ldots,\Theta_n(\lambda)$ and
$\Gamma_0,\ldots, \Gamma_n$ have been chosen, then let $Z_{n+1}\in
{\bf S}(\sD_{\Gamma_n},\sD_{\Gamma^*_n})$ be the M\"obius parameter
of $\Theta_n$. Put
\[
\Theta_{n+1}(\lambda)=\lambda^{-1}Z_{n+1}(\lambda),\;
\Gamma_{n+1}=\Theta_{n+1}(0).
\]
 The contractions $\Gamma_0\in\bL(\sM,\sN),$
$\Gamma_n\in\bL(\sD_{\Gamma_{n-1}},\sD_{\Gamma^*_{n-1}})$,
$n=1,2,\ldots$ are called the \textit{Schur parameters} of
$\Theta(\lambda)$ and the function $\Theta_n \in {\bf
S}(\sD_{\Gamma_{n-1}},\sD_{\Gamma^*_{n-1}})$ is called the $n-th$
\textit{associated function}. Thus,
\[
\Theta_n(\lambda)=\Gamma_n+\lambda D_{\Gamma^*_n}\Theta_{n+1}(\lambda)(I+\lambda\Gamma^*\Theta_{n+1}(\lambda))^{-1}D_{\Gamma_n},\;\lambda\in\dD,
\]
and
\[
\Theta_{n+1}(\lambda)\uphar\ran
D_{\Gamma_n}=\lambda^{-1}D_{\Gamma^*_n}(I-\Theta_n(\lambda)\Gamma^*_n)^{-1}
(\Theta_n(\lambda)-\Gamma_n)D^{-1}_{\Gamma_n}\uphar\ran
D_{\Gamma_n}.
\]
Clearly, the sequence of Schur parameters $\{\Gamma_n\}$ is infinite
if and only if the operators $\Gamma_n$ are non-unitary. The
sequence of Schur parameters consists of finite number of operators
$\Gamma_0,$ $\Gamma_1,\ldots, \Gamma_N$ if and only if
$\Gamma_N\in\bL(\sD_{\Gamma_{N-1}},\sD_{\Gamma^*_{N-1}})$ is
unitary. If $\Gamma_N$ is non-unitary but isometric (respect.,
co-isometric), then $\Gamma_n=0\in\bL(0,\sD_{\Gamma^*_N})$
(respect., $\Gamma_n=0\in\bL(\sD_{\Gamma_N},0)$) for all $n>N$. The
following theorem is the operator generalization of Schur's result.
\begin{theorem} \label{SchurAlg}\cite{BC, Const}. There is a one-to-one
correspondence between the Schur class ${\bf S}(\sM,\sN)$ and the
set of all sequences of contractions $\{\Gamma_n\}_{n\ge 0}$ such
that
\begin{equation}
\label{CHSEQ} \Gamma_0\in\bL(\sM,\sN),\;\Gamma_n\in
\bL(\sD_{\Gamma_{n-1}},\sD_{\Gamma^*_{n-1}}),\; n\ge 1.
\end{equation}
\end{theorem}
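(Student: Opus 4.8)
The plan is to show that the operator Schur algorithm and its inverse set up the asserted bijection, using the M\"obius representation (the first Theorem) as the only nontrivial input. First I would record that the algorithm produces a well-defined map $\mathcal S\colon\Theta\mapsto\{\Gamma_n\}_{n\ge0}$ into admissible sequences. Indeed, at each stage the M\"obius representation applied to the associated function $\Theta_n\in{\bf S}(\sD_{\Gamma_{n-1}},\sD_{\Gamma^*_{n-1}})$ yields a M\"obius parameter $Z_{n+1}\in{\bf S}(\sD_{\Gamma_n},\sD_{\Gamma^*_n})$ with $Z_{n+1}(0)=0$, so that $\Theta_{n+1}=\lambda^{-1}Z_{n+1}$ again lies in ${\bf S}(\sD_{\Gamma_n},\sD_{\Gamma^*_n})$ and $\Gamma_{n+1}=\Theta_{n+1}(0)$ is a contraction of the type required by \eqref{CHSEQ}. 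Thus $\{\Gamma_n\}$ is admissible, and it remains to prove that $\mathcal S$ is injective and surjective.

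Second, for injectivity I would prove the key structural fact that finitely many Schur parameters determine finitely many Taylor coefficients. Expanding $(I+\lambda\Gamma_n^*\Theta_{n+1}(\lambda))^{-1}$ as a Neumann series in the recursion
\[
\Theta_n(\lambda)=\Gamma_n+\lambda D_{\Gamma^*_n}\Theta_{n+1}(\lambda)\bigl(I+\lambda\Gamma_n^*\Theta_{n+1}(\lambda)\bigr)^{-1}D_{\Gamma_n},
\]
one sees that $\Theta_n(0)=\Gamma_n$ and that the $m$-th Taylor coefficient of $\Theta_n$ is a fixed universal expression in $\Gamma_n,D_{\Gamma_n},D_{\Gamma^*_n}$ and the Taylor coefficients of $\Theta_{n+1}$ of order at most $m-1$. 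A downward induction then shows that the $m$-th Taylor coefficient of $\Theta=\Theta_0$ depends only on $\Gamma_0,\dots,\Gamma_m$. Since a Schur function is determined by its Taylor coefficients, two functions with the same Schur parameters coincide, proving injectivity. Solving the same recursion for $\Theta_{n+1}$ in terms of $\Theta_n$ and $\Gamma_n$ (as recorded in the excerpt) shows conversely that $\Gamma_0,\dots,\Gamma_m$ are recovered from the Taylor coefficients of $\Theta$ of order at most $m$.

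Third, for surjectivity I would reconstruct a function from a prescribed admissible sequence by truncation. Given $\{\Gamma_n\}$, for each $N$ put $\Theta^{(N)}_{N+1}\equiv0$ and define $\Theta^{(N)}_n$ for $n=N,N-1,\dots,0$ by the downward recursion above, setting $\Theta^{(N)}:=\Theta^{(N)}_0$. Each downward step is exactly the inverse M\"obius transform (with $\lambda\Theta^{(N)}_{n+1}$ playing the role of the M\"obius parameter), which carries a Schur-class function to a Schur-class function; hence $\Theta^{(N)}\in{\bf S}(\sM,\sN)$, and by construction its Schur parameters are $\Gamma_0,\dots,\Gamma_N,0,0,\dots$. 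By the coefficient lemma of the previous paragraph, for each fixed $m$ the $m$-th Taylor coefficient of $\Theta^{(N)}$ is independent of $N$ once $N\ge m$, so these coefficients stabilize and define a formal power series.

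Finally I would establish convergence and identify the limit. Since the coefficients of $\Theta^{(N)}$ stabilize, a candidate limit $\Theta$ is defined by the resulting power series. To see that $\Theta^{(N)}\to\Theta$ on $\dD$ one may either invoke normality of the uniformly bounded family $\{\Theta^{(N)}\}$ together with coefficient stabilization, or argue directly that $\{\Theta^{(N)}(\lambda)\}$ is Cauchy in norm on each disk $|\lambda|\le r<1$: since $\Theta^{(N)}$ and $\Theta^{(N+1)}$ share the parameters $\Gamma_0,\dots,\Gamma_N$ and the downward recursion carries a factor $\lambda$ at each stage, their difference is of order $O(r^{N})$. The limit, being a pointwise norm limit of contractions, satisfies $\|\Theta(\lambda)\|\le1$, so $\Theta\in{\bf S}(\sM,\sN)$; applying the algorithm to $\Theta$ and using the coefficient lemma once more shows that its Schur parameters are exactly $\{\Gamma_n\}$, which yields surjectivity. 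I expect this limiting step to be the main obstacle: one must make the damping estimate precise in the operator-valued, possibly infinite-dimensional, setting, ensure that contractivity passes to the limit, and—throughout the construction—verify that the downward recursion never leaves the Schur class, both points resting on the reverse direction of the M\"obius representation.
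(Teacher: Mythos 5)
Your argument is sound, but it is not the paper's route, because the paper does not prove Theorem \ref{SchurAlg} at all: it imports it from \cite{BC, Const}, and the proof it recounts (see \cite[Theorem 2.1]{BC} as summarized in the subsection on lower triangular Toeplitz matrices, together with Section \ref{parcontmatr}) is matrix-theoretic rather than function-theoretic. There one uses the equivalence $\Theta\in{\bf S}(\sM,\sN)\iff T_\Theta$ is a contraction, and builds the bijection between contractive lower-triangular Toeplitz matrices and choice sequences by successive one-step extensions of the finite sections $T_0,T_1,T_2,\ldots$: at step $n$ the parametrization of contractive block operator matrices (Theorem \ref{ParContr1}) exhibits exactly one free contractive parameter, which, after unitary identification of defect spaces, is $\Gamma_n$ --- precisely the computation carried out inside the proof of Theorem \ref{Main44}. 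That route obtains injectivity and surjectivity simultaneously and needs no limiting argument: the infinite Toeplitz matrix is the strong limit of its uniformly contractive sections, so contractivity of $T_\infty$, hence Schur class membership of $\Theta$, is automatic, and the ``reverse direction'' of the M\"obius representation that you rightly single out as your main obligation is built into the equivalence of (i), (ii), (iii) in Theorem \ref{ParContr1}. Your function-level proof --- downward M\"obius recursion from truncated sequences, the coefficient lemma (the $m$-th Taylor coefficient depends only on $\Gamma_0,\dots,\Gamma_m$, which is \eqref{ConstForm} in disguise, and conversely $\Gamma_0,\dots,\Gamma_m$ are recovered from $C_0,\dots,C_m$ since the products $D_{\Gamma_{n-1}}\cdots D_{\Gamma_0}$ have dense range in $\sD_{\Gamma_{n-1}}$), coefficient stabilization, and the geometric estimate of order $r^{N}$ on $|\lambda|\le r$ using $\|C_m\|\le 1$ --- is correct and is closer to Schur's classical scalar algorithm, with the convergence rate made explicit; what it buys is a self-contained analytic construction, at the cost of having to prove separately the two facts you flag: that the inverse M\"obius step maps the Schur class to itself (deducible from Theorem \ref{ParContr1}), and that the M\"obius parameter is unique, which follows from the inversion formula for $\Theta_{n+1}\uphar\ran D_{\Gamma_n}$ quoted after the operator Schur algorithm and is what legitimizes your claim that the algorithm applied to $\Theta^{(N)}$ returns the parameters $\Gamma_0,\dots,\Gamma_N,0,0,\dots$.
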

Notice that a sequence of contractions of the form \eqref{CHSEQ} is
called the \textit{choice sequence} \cite{CF}.

\subsection{The lower triangular Toeplitz matrices}

Let $\Theta$ be holomorphic in $\dD$ operator valued function acting between Hilbert spaces $\sM$ and $\sN$ and let
\[
\Theta(\lambda)=\sum\limits_{n=0}^{\infty}\lambda^n C_n,\; \lambda\in\dD,\; C_n\in\bL(\sM,\sN), n\ge 0
\]
be the Taylor expansion of $\Theta$. Consider the lower triangular
(analytic) Toeplitz matrix
\begin{equation}
\label{toep} T_\Theta:=\begin{bmatrix}C_0&0&0&0&\ldots&\ldots\cr
C_1&C_0&0&0&\ldots&\ldots\cr
C_2&C_1&C_0&0&0&\ldots\cr
C_3&C_2&C_1&C_0&0&\ldots\cr
\vdots&\vdots&\vdots&\vdots&\vdots&\vdots
\end{bmatrix}.
\end{equation}
As is well known \cite{BC, FoFr}
\[
\Theta\in\bS(\sM,\sN)\iff
T_\Theta\in\bL\left(l_2(\sM),l_2(\sN)\right)\quad\mbox{is a
contraction}.
\]
Set for $n=0,1,\ldots$
\[\begin{array}{l}
\sM^{n+1}=\underbrace{\sM\oplus\sM\oplus\cdots\oplus\sM}_{n+1},\; 
\sN^{n+1}=\underbrace{\sN\oplus\sN\oplus\cdots\oplus\sN}_{n+1}.\\
\end{array}
\]
 Clearly, if $T_\Theta$ is a contraction, then the
operator $T_{\Theta,n}\in \bL\left(\sM^{n+1},\sN^{n+1}\right)$ given
by the block operator matrix
\begin{equation}
\label{toepn} T_{\Theta,n}:=\begin{bmatrix}C_0&0&0&\ldots&0\cr
C_1&C_0&0&\ldots&0\cr \vdots&\vdots&\vdots&\vdots&\vdots\cr
C_n&C_{n-1}&C_{n-2}&\ldots&C_0\end{bmatrix}
\end{equation}
is a contraction  for each $n$. There are connections, established
by T.~Constantinescu \cite{Const}, between the Taylor coefficients
$\{C_n\}_{n\ge 0}$ and Schur parameters of $\Theta\in\bS(\sM,\sN)$.
These connections are given by the relations
\begin{equation}
\label{ConstForm}
\begin{array}{l}
C_0=\Gamma_0,\\
 C_n={formula}_n(\Gamma_0,\Gamma_1,\cdots,
\Gamma_{n-1})+\\
\qquad \qquad D_{\Gamma^*_0}D_{\Gamma^*_1}\cdots D_{\Gamma^*_{n-1}}
\Gamma_n D_{\Gamma_{n-1}}\cdots D_{\Gamma_1}D_{\Gamma_0},\; n\ge 1.
\end{array}
\end{equation}
 Here ${formula}_n(\Gamma_0,\Gamma_1,\cdots, \Gamma_{n-1})$
is a some expression, depending on $\Gamma_0,\Gamma_1,\cdots,
\Gamma_{n-1}$.

Let now $\{C_k\}_{k=0}^\infty$ be a sequence of operators from $\bL(\sM,\sN)$.
Then (\cite[Theorem 2.1]{BC}) there is a one-to-one correspondence between the set of contractions
\[
 T_\infty:=\begin{bmatrix}C_0&0&0&0&0&\ldots\cr
C_1&C_0&0&0&0&\ldots\cr
C_2&C_1&C_0&0&0&\ldots\cr
C_3&C_2&C_1&C_0&0&\ldots\cr
\vdots&\vdots&\vdots&\vdots&\vdots&\vdots
\end{bmatrix}: l_2(\sM)\to l_2(\sN)
\]
and the set of all choice sequences $\Gamma_0\in\bL(\sM,\sN),$ $\Gamma_k\in\bL(\sD_{\Gamma_{k-1}},\sD_{\Gamma^*_{k-1}})$, $k=1,\ldots$.
The connections between $\{C_k\}$ and $\{\Gamma_k\}$ is also given by \eqref{ConstForm}.
The operators $\{\Gamma_k\}$ can be by successively defined \cite[proof of Theorem 2.1]{BC},
 using parametrization of contractive
block-operator matrices (see Section  \ref{parcontmatr}), from the matrices
\[
T_0=C_0=\Gamma_0, \;T_1=\begin{bmatrix}C_0&0\cr C_1&C_0\end{bmatrix}, \;T_2=\begin{bmatrix}C_0&0&0\cr C_1&C_0&0\cr
C_2&C_1&C_0\end{bmatrix},\ldots.
\]
 Moreover, $T_\infty=T_\Theta$,  $\Theta(\lambda)=\sum\limits_{n=0}^{\infty}\lambda^n C_n$, $\lambda\in\dD$, and $\{\Gamma_k\}_{k\ge 0}$
are the Schur parameters of $\Theta$ \cite[Proposition 2.2]{BC}. Put
\[
\wt\Theta(\lambda):=\Theta^*(\bar \lambda),\;|\lambda|<1.
\]
Then $ \wt\Theta(\lambda)=\sum\limits_{n=0}^{\infty}\lambda^n
C^*_n.$ Clearly, if $\{\Gamma_0,\Gamma_1,\ldots\}$ are the Schur
parameters of $\Theta$, then $\{\Gamma^*_0,\Gamma^*_1,\ldots\}$ are
the Schur parameters of $\wt \Theta$.

\subsection{The Schur problem}\label{SchPr}
The following problem is called the \textit{Schur problem}:

\textit{Let $\sM$ and $\sN$ be Hilbert spaces. Given the operators
$C_k\in\bL(\sM,\sN)$, $k=0,1,\ldots, N$, it is required to
({\bf{a}}) find conditions for the existence of
$\Theta\in\bS(\sM,\sN)$ such that $C_0,C_1,\ldots, C_N$ are the
first $N+1$ Taylor coefficients of $\Theta$,({\bf{b}}) give an
explicit description of all solutions $\Theta$ (if there any) to
problem ({\bf{a}}).}

The Schur problem is often called the Carath\'eodory or the
Carath\'eodory-Fej\'{e}r problem. This problem was studied in many
papers, see monographs \cite{BC, DFK, FoFr} and references therein.
It is well known that the Schur problem has a solution if and only
if the Toeplitz operator from $\bL(\sM^{N+1},\sN^{N+1})$
\begin{equation}
\label{trToe} T_N=T_N(C_0,C_1,\ldots,
C_N):=\begin{bmatrix}C_0&0&0&\ldots&0\cr C_1&C_0&0&\ldots&0\cr
\vdots&\vdots&\vdots&\vdots&\vdots\cr
C_N&C_{N-1}&C_{N-2}&\ldots&C_0\end{bmatrix}
\end{equation}
is a contraction. By means of relations \eqref{ConstForm}
contractions $T_0, T_1,\ldots, T_N$ determine choice parameters
\[
\Gamma_0:=C_0,\; \Gamma_1\in\bL(\sD_{\Gamma_0},\sD_{\Gamma_0^*}),\ldots,\Gamma_N\in\bL(\sD_{\Gamma_{N-1}},\sD_{\Gamma_{N-1}^*})
\]
If $T_N$ is a contraction, then operators $\{C_k\}_{k=0}^N$ are said
to be \textit{the Schur sequence} \cite{DFK}. Let us formulate known
conditions for a uniqueness solution to the Schur problem.
\begin{theorem}
\label{SchPr1} \cite[Proposition 2.3]{BC}. Let the complex numbers
$\{C_k\}_{k=0}^N$ be the Schur sequence. Then the following
assertions are equivalent:
\begin{enumerate}
\def\labelenumi{\rm (\roman{enumi})}
\item the Schur problem with data $\{C_k\}_{k=0}^N$ has a unique solution;
\item there exists a number $r$, $0\le r\le N$ such that $|\Gamma_r|=1$;
\item $\det D^2_{T_r}=0$ for some $0\le r\le N$, but $\det D^2_{T_p}\ne 0$ for $0\le p<r;$
\item $\det D^2_{T_N}=0$.
\end{enumerate}
\end{theorem}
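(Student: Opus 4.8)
The plan is to establish (i) $\Leftrightarrow$ (ii) from the one-to-one correspondence of Theorem~\ref{SchurAlg}, and then to reduce (ii) $\Leftrightarrow$ (iii) $\Leftrightarrow$ (iv) to an explicit determinant formula expressing $\det D^2_{T_r}$ through the Schur parameters $\Gamma_0,\ldots,\Gamma_r$.

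For (i) $\Leftrightarrow$ (ii) I would argue as follows. By the Constantinescu relations \eqref{ConstForm} the data $C_0,\ldots,C_N$ depends only on $\Gamma_0,\ldots,\Gamma_N$, and conversely these parameters are recovered from the contractions $T_0,T_1,\ldots,T_N$ as recalled before the theorem. Hence, under Theorem~\ref{SchurAlg}, every solution $\Theta$ of the Schur problem corresponds to a choice sequence whose first $N+1$ terms coincide with $\Gamma_0,\ldots,\Gamma_N$, and the solution set is parametrized by the admissible tails $\Gamma_{N+1},\Gamma_{N+2},\ldots$. If $|\Gamma_r|<1$ for every $r\le N$, then $\sD_{\Gamma_N}\ne\{0\}$, so $\Gamma_{N+1}\in\bL(\sD_{\Gamma_N},\sD_{\Gamma_N^*})$ admits at least two distinct values and the problem has more than one solution. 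If instead $|\Gamma_r|=1$ for some $r\le N$, then $\sD_{\Gamma_r}=\{0\}$ forces $\Gamma_{r+1}=\Gamma_{r+2}=\cdots=0$, so the choice sequence and therefore $\Theta$ are uniquely determined; that this $\Theta$ really reproduces $C_0,\ldots,C_N$ follows from \eqref{ConstForm} together with the standing hypothesis that $\{C_k\}_{k=0}^N$ is a Schur sequence. This proves (i) $\Leftrightarrow$ (ii).

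The core of the rest is the identity
\begin{equation}
\label{detformula}
\det D^2_{T_r}=\prod_{k=0}^{r}\bigl(1-|\Gamma_k|^2\bigr)^{\,r+1-k},\qquad 0\le r\le N,
\end{equation}
which I would prove by induction on $r$. The case $r=0$ is immediate, since $T_0=\Gamma_0$ and $D^2_{T_0}=1-|\Gamma_0|^2$. For the inductive step, write $T_r$ as the bordered matrix with upper-left corner $T_{r-1}$, new bottom row $(C_r,\ldots,C_1)$ and corner entry $C_0=\Gamma_0$, and compute $\det\bigl(I-T_r^*T_r\bigr)$ by the Schur-complement (bordered-determinant) formula; the rank-one correction produced by the border is then evaluated using the Constantinescu expressions \eqref{ConstForm} for $C_1,\ldots,C_r$. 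This yields the recursion $\det D^2_{T_r}=\bigl(\prod_{k=0}^{r}(1-|\Gamma_k|^2)\bigr)\det D^2_{T_{r-1}}$, which is equivalent to \eqref{detformula}. Equivalently, one may obtain \eqref{detformula} from the factorization of the contractive block matrix $T_r$ through $T_{r-1}$ and the new rotation $\Gamma_r$ supplied by the parametrization of Section~\ref{parcontmatr}.

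Granting \eqref{detformula}, the remaining equivalences are immediate. Since $0\le|\Gamma_k|\le1$ for all $k$, the product in \eqref{detformula} vanishes exactly when $|\Gamma_k|=1$ for some $k\le r$; taking $r=N$ gives (ii) $\Leftrightarrow$ (iv). Furthermore, if $r$ is the least index with $|\Gamma_r|=1$, then for $p<r$ every factor of $\det D^2_{T_p}$ is strictly positive while the new factor $(1-|\Gamma_r|^2)$ first enters at stage $r$; hence $\det D^2_{T_p}\ne0$ for $p<r$ and $\det D^2_{T_r}=0$, which is precisely (iii), and conversely (iii) singles out this least index and returns (ii). I expect the inductive step for \eqref{detformula} to be the main obstacle: the bordered-determinant computation is routine, but matching the off-diagonal border against the defect-operator products coming from \eqref{ConstForm}, so that the clean factor $\prod_{k=0}^{r}(1-|\Gamma_k|^2)$ emerges, is the only genuinely computational part and requires careful bookkeeping.
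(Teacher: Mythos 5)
The paper does not actually prove this theorem: it is quoted verbatim from \cite[Proposition 2.3]{BC} as background, so there is no internal proof to match your argument against; judged on its own, your proof is correct in substance. The equivalence (i) $\Leftrightarrow$ (ii) via the bijection of Theorem \ref{SchurAlg} is the standard argument and agrees with the scalar specialization of Theorem \ref{SchPr2}, and your determinant identity $\det D^2_{T_r}=\prod_{k=0}^{r}(1-|\Gamma_k|^2)^{r+1-k}$ is a true classical formula (e.g. $\det D^2_{T_1}=(1-|\Gamma_0|^2)^2(1-|\Gamma_1|^2)$, using $C_1=(1-|\Gamma_0|^2)\Gamma_1$). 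The step you flag as the main obstacle --- extracting the factor $\prod_{k=0}^{r}(1-|\Gamma_k|^2)$ from the bordered determinant via the unspecified expressions ${formula}_n$ in \eqref{ConstForm} --- can be bypassed entirely with the paper's own machinery: write $T_r$ along $\sM\oplus\sM^{r}$ as in Proposition \ref{KrShortforT}; when $\det D^2_{T_{r-1}}\ne 0$, the Schur-complement formula \eqref{shormat1} gives $\det D^2_{T_r}=\det D^2_{T_{r-1}}\cdot\det\bigl((D^2_{T_r})_\sM\uphar\sM\bigr)$, and in the scalar case \eqref{Vazhno1} of Theorem \ref{Main44} says $(D^2_{T_r})_\sM\uphar\sM=(1-|\Gamma_r|^2)\prod_{k=0}^{r-1}(1-|\Gamma_k|^2)$, which is exactly your claimed recursion with no bookkeeping at all.

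Two small repairs are needed for full rigor. First, your product formula only makes literal sense while $|\Gamma_k|<1$ for all $k<r$: after the first unimodular parameter the later $\Gamma_j$ live in trivial defect spaces, so state the identity up to the least $r$ with $|\Gamma_r|=1$ and then propagate singularity separately, e.g. from the block decomposition in the proof of Proposition \ref{unique1} one has $P_{\sM^{r+1}}D^2_{T_{r+1}}\uphar\sM^{r+1}=D^2_{T_r}-B_r^*B_r\le D^2_{T_r}$, and a positive semidefinite matrix with a singular principal submatrix is singular, so $\det D^2_{T_r}=0$ forces $\det D^2_{T_{r+1}}=0$ (the determinant analogue of Corollary \ref{zero}). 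Second, in the direction (ii) $\Rightarrow$ (i) you should say explicitly that the unique solution is the finite Blaschke product obtained by terminating the Schur algorithm at the unimodular $\Gamma_r$, and that the standing hypothesis that $\{C_k\}_{k=0}^N$ is a Schur sequence is what guarantees this function reproduces all of $C_0,\dots,C_N$ --- the same point made in the second proof of Theorem \ref{OPUNI}, which is the paper's operator-valued replacement for the present scalar statement.
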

\begin{theorem}
\label{SchPr2} \cite[Theorem 2.6]{BC}. Consider a solvable Schur problem with the data
$$C_0,\ldots, C_N \in\bL(\sM,\sN).$$
Then the solution is unique if and only if the corresponding choice parameters $\{\Gamma_n\}_{n=0}^N$, determined by the operator $T_N$, satisfy
the condition: one of $\Gamma_n$, $0\le n\le N$ is an isometry or a co-isometry.
\end{theorem}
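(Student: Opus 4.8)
The plan is to reduce the uniqueness question to a rigidity statement about extensions of choice sequences, using the one-to-one correspondence of Theorem \ref{SchurAlg}. Since the problem is solvable, $T_N$ is a contraction and, as recalled in the excerpt, it determines the choice parameters $\Gamma_0,\ldots,\Gamma_N$ through \eqref{ConstForm}; conversely these parameters recover $C_0,\ldots,C_N$. Consequently, a function $\Theta\in\bS(\sM,\sN)$ solves the Schur problem if and only if its Schur parameters $\{\Gamma'_n\}_{n\ge 0}$ satisfy $\Gamma'_n=\Gamma_n$ for $0\le n\le N$. By Theorem \ref{SchurAlg} the solutions are therefore in bijection with the infinite choice sequences that extend the finite string $(\Gamma_0,\ldots,\Gamma_N)$, i.e. with the admissible choices of $\Gamma_{N+1},\Gamma_{N+2},\ldots$ subject to $\Gamma_n\in\bL(\sD_{\Gamma_{n-1}},\sD_{\Gamma^*_{n-1}})$ as in \eqref{CHSEQ}. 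Uniqueness of the solution is thus equivalent to uniqueness of such an extension.

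For the sufficiency direction I would assume that some $\Gamma_r$ with $0\le r\le N$ is an isometry or a co-isometry. If $\Gamma_r$ is isometric then $D_{\Gamma_r}=0$, so $\sD_{\Gamma_r}=\cran D_{\Gamma_r}=\{0\}$; if $\Gamma_r$ is co-isometric then $\sD_{\Gamma^*_r}=\{0\}$. By the collapse property noted before Theorem \ref{SchurAlg}, once a defect (co-)space degenerates to $\{0\}$ every subsequent parameter is forced to be the zero operator acting on, or into, the zero space, and these zero operators are themselves (trivially) isometric or co-isometric with again-degenerate defect spaces. In particular $\Gamma_N$ inherits the property, so $\sD_{\Gamma_N}=\{0\}$ or $\sD_{\Gamma^*_N}=\{0\}$; then $\Gamma_{N+1}$ lies in $\bL(\{0\},\sD_{\Gamma^*_N})$ or $\bL(\sD_{\Gamma_N},\{0\})$ and must equal $0$, and by induction all tail terms vanish. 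Hence the extension is unique, and so is the solution.

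For the necessity direction I would argue by contraposition: assume none of $\Gamma_0,\ldots,\Gamma_N$ is an isometry or a co-isometry. Then in particular $\Gamma_N$ is neither, so $D_{\Gamma_N}\ne 0$ and $D_{\Gamma^*_N}\ne 0$, whence both $\sD_{\Gamma_N}\ne\{0\}$ and $\sD_{\Gamma^*_N}\ne\{0\}$. Choosing unit vectors $u\in\sD_{\Gamma_N}$ and $v\in\sD_{\Gamma^*_N}$, the space $\bL(\sD_{\Gamma_N},\sD_{\Gamma^*_N})$ contains the two distinct contractions $\Gamma_{N+1}=0$ and $\Gamma_{N+1}=v\langle\,\cdot\,,u\rangle$, each completable to a full choice sequence (e.g. by taking all further terms zero). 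By the bijection of Theorem \ref{SchurAlg} these produce two distinct Schur functions, both having $\Gamma_0,\ldots,\Gamma_N$ as their first $N+1$ Schur parameters and hence $C_0,\ldots,C_N$ as their first $N+1$ Taylor coefficients. Thus the problem has at least two solutions and the solution is not unique.

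The step I expect to require the most care is the bookkeeping around degenerate defect spaces: one must verify precisely that the collapse of a domain or codomain to $\{0\}$ propagates forward and that the resulting zero operators count as isometries or co-isometries, which is exactly what makes the condition ``$\Gamma_N$ is an isometry or a co-isometry'' equivalent to ``some $\Gamma_n$ with $0\le n\le N$ is''. Once this equivalence is settled, the remainder is a direct application of the already-established correspondences between $T_N$, the parameters $\Gamma_0,\ldots,\Gamma_N$, and the Schur-class solutions.
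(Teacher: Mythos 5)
Your proof is correct. Note that the paper itself gives no proof of this statement: it is quoted from \cite{BC}, so the fair comparison is with the machinery the paper develops around it. Your route --- identifying the solution set, via Theorem \ref{SchurAlg} and \eqref{ConstForm}, with the set of infinite choice sequences extending $(\Gamma_0,\ldots,\Gamma_N)$, then showing that an isometric or co-isometric $\Gamma_r$ collapses $\sD_{\Gamma_r}$ or $\sD_{\Gamma^*_r}$ to $\{0\}$ and freezes the tail, while a $\Gamma_N$ that is neither admits the two distinct extensions $\Gamma_{N+1}=0$ and $\Gamma_{N+1}=v\langle\cdot,u\rangle$ --- is the standard choice-sequence argument; indeed your sufficiency half appears almost verbatim inside the paper's second proof of Theorem \ref{OPUNI}, where $\Gamma_p$ isometric forces $\sD_{\Gamma_p}=\cdots=\sD_{\Gamma_{N-1}}=\{0\}$, $\Gamma_{p+1}=\cdots=\Gamma_N=0$, and the unique solution $\cM_{\Gamma_0}\circ\cdots\circ\cM_{\Gamma_{p-1}}(\lambda\Gamma_p)$. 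The paper's own apparatus would instead argue through Kre\u\i n shorted operators: Theorem \ref{central} parametrizes all admissible next coefficients as $C_{N+1}=\dot{C}_{N+1}+\left(\left(D^2_{\wt T_N}\right)_\sN\uphar\sN\right)^{1/2}Y\left(\left(D^2_{T_N}\right)_\sM\uphar\sM\right)^{1/2}$, so uniqueness holds iff $\left(D^2_{T_N}\right)_\sM=0$ or $\left(D^2_{\wt T_N}\right)_\sN=0$, and Theorem \ref{Main44} translates this via \eqref{Vazhno1}--\eqref{Vazhno2} into the vanishing of $D_{\Gamma_0}\cdots D_{\Gamma_{N-1}}D^2_{\Gamma_N}D_{\Gamma_{N-1}}\cdots D_{\Gamma_0}$, i.e., into some $\Gamma_n$ being isometric or co-isometric. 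Your argument buys independence from that analytic apparatus (no shorted operators, no one-step-lifting parametrization) at the cost of the degenerate-space bookkeeping you correctly flagged; the shorted-operator route buys, in addition, the quantitative statements of Theorems \ref{crcent} and \ref{entropy}.

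Two background facts in your write-up deserve an explicit word, though neither is a genuine gap since the paper invokes them when saying that $T_N$ ``determines'' the choice parameters. First, the claim that every solution's first $N+1$ Schur parameters coincide with $\Gamma_0,\ldots,\Gamma_N$ rests on the injectivity of the map $\Gamma_n\mapsto D_{\Gamma^*_0}\cdots D_{\Gamma^*_{n-1}}\Gamma_n D_{\Gamma_{n-1}}\cdots D_{\Gamma_0}$ in \eqref{ConstForm}, which holds because $D_{\Gamma_{n-1}}\cdots D_{\Gamma_0}$ has dense range in $\sD_{\Gamma_{n-1}}$ and $D_{\Gamma^*_0}\cdots D_{\Gamma^*_{n-1}}$ is injective on $\sD_{\Gamma^*_{n-1}}$. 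Second, in your necessity step the two extensions yield genuinely distinct solutions either by the injectivity of the correspondence in Theorem \ref{SchurAlg} or, more concretely, because the same flanking injectivity applied to \eqref{ConstForm} at index $N+1$ shows the two choices of $\Gamma_{N+1}$ produce different coefficients $C_{N+1}$.
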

\subsection{Simple conservative discrete time-invariant systems and their transfer functions}
Here we recall some results from the theory of conservative discrete
time-invariant systems cf. \cite{Alpay, ADRS, A, Arov, Br1, Helton2,
Ball-Coehn, Staf1}.

A collection
\[
\tau=\left\{\begin{bmatrix}D&C\cr B& A\end{bmatrix}; \mathfrak M,\mathfrak N,\mathfrak H\right\}
\]
is called the linear discrete time-invariant systems with the state space $\mathfrak H$ and the input and output
spaces $\mathfrak M$ and $\mathfrak N $, respectively. A system $\tau$ is called conservative if the
linear operator
\[
T_\tau=\begin{bmatrix} D&C \cr B&A\end{bmatrix} :
\begin{array}{l} \sM \\\oplus\\ \sH \end{array} \to
\begin{array}{l} \sN \\\oplus\\ \sH \end{array}
\]
is unitary.
The \textit{transfer function}
\[
\Theta_\tau(\lambda):=D+\lambda C(I-\lambda A)^{-1}B, \quad \lambda
\in \dD,
\]
of a conservative system $\tau$ belongs to the Schur class ${\bf
S}(\sM,\sN)$.
Conservative systems are also called unitary colligations  and their transfer
functions are called the characteristic functions \cite{Br1}.
The subspaces
\[
\sH^c_\tau:=\cspan\{A^{n}B\sM:\,n=0,1,\ldots\} \mbox{ and
} \sH^o_\tau=\cspan\{A^{*n}C^*\sN:\,n=0,1,\ldots\}
\]
are said to be the \textit{controllable} and \textit{observable}
subspaces of the system $\tau$, respectively. The system $\tau$ is
said to be \textit{controllable} (respect., \textit{observable}) if
$\sH^c_\tau=\sH$ (respect., $\sH^o_\tau=\sH$), and it is called
\textit{minimal} if $\tau$ is both controllable and observable. The
system $\tau$ is said to be \textit{simple} if $\sH=\clos
\{\sH^c_\tau+\sH^o_\tau\}$ (the closure of the span).
Two discrete time-invariant systems
\[
\tau_1=\left\{\begin{bmatrix} D&C_1 \cr
B_1&A_1\end{bmatrix};\sM,\sN,\sH_{1}\right\} \quad \mbox{and} \quad
\tau_2=\left\{\begin{bmatrix} D&C_2 \cr
B_2&A_2\end{bmatrix};\sM,\sN,\sH_{2}\right\}
\]
are said to be \textit{unitarily similar} if there exists a unitary
operator $U$ from $\sH_{1}$ onto $\sH_{2}$ such that
\[
A_1 =U^{-1}A_2U,\quad B_1=U^{-1}B_2,\quad C_1=C_2 U.
\]
As is well known, two simple conservative systems with the same
transfer function are unitarily similar. It is important that any
function $\Theta\in {\bf S}(\sM,\sN)$ can be realized as the
transfer function of a linear conservative and simple discrete-time
system.

\subsection{M.~Kre\u{\i}n's shorted operators} For every nonnegative bounded operator
$S$ in the Hilbert space $\cH$ and every subspace $\cK\subset \cH$
M.G.~Kre\u{\i}n \cite{Kr} defined the operator $S_{\cK}$ by the
relation
\[
 S_{\cK}=\max\left\{\,Z\in \bL(\cH):\,
    0\le Z\le S, \, {\ran}Z\subseteq{\cK}\,\right\}.
\]
The equivalent definition
\begin{equation}
\label{Sh1}
 \left(S_{\cK}f, f\right)=\inf\limits_{\f\in \cK^\perp}\left\{\left(S(f + \varphi),f +
 \varphi\right)\right\},
\quad  f\in\cH.
\end{equation}
Here $\cK^\perp:=\cH\ominus{\cK}$. The properties of $S_{\cK}$, were
studied by M.~Kre\u{\i}n and by other authors (see \cite{ARL1} and
references therein).
 $S_{\cK}$ is called the \textit{shorted
operator} (see \cite{And, AT}). Let the subspace $\Omega$ be defined
as follows
\[
 \Omega=\{\,f\in \cran S:\,S^{1/2}f\in {\cK}\,\}=\cran S\ominus
S^{1/2}\cK^\perp.
\]
It is proved in \cite{Kr} that $S_{\cK}$ takes the form
\[
 S_{\cK}=S^{1/2}P_{\Omega}S^{1/2}.
\]
 Hence, $\ker S_\cK\supseteq\cK^\perp.$
Moreover \cite{Kr},
\begin{equation}
\label{rangeSh}
 {\ran}S_{\cK}^{1/2}={\ran}S^{1/2}\cap{\cK}.
\end{equation}
It follows that
\begin{equation}
\label{nol}
  S_{\cK}=0 \iff  \ran S^{1/2}\cap \cK=\{0\}.
\end{equation}
\subsection{The goal of this paper}
In this paper we establish connections between the Schur parameters
of $\Theta\in\bS(\sM,\sN)$, a simple conservative realization of
$\Theta,$ the operators $T_\Theta$ and $T_{\Theta,n}$, and the
Kre\u{\i}n shorted operators. These connections allows to
\begin{enumerate}
\item give criterions of controllability and observability for the corresponding to $\Theta$ simple conservative system in terms of Schur parameters/
Kre\u\i n shorted operators $\left(D^2_{T_\Theta}\right)_\sM$ and
$\left(D^2_{T_{\wt\Theta}}\right)_\sN$,
\item to obtain necessary and sufficient conditions for a completely non-unitary contraction $A$ to be completely non-isometric
 or completely non-co-isometric  \cite{Ball-Coehn} in
terms of Schur parameters / Kre\u\i n shorted operators
$\left(D^2_{T_\Psi}\right)_{\sD_A}$,
$\left(D^2_{T_{\wt\Psi}}\right)_{\sD_{A^*}}$ of Sz-Nagy--Foias
characteristic function $\Psi$ of $A$ \cite{SF},
\item give a characterization of the central (maximal entropy)
solution to the Schur problem,
\item give a uniqueness criterion to the solution of the operator Schur problem in terms of the Kre\u{\i}n shorted operators for the defect operators
of the Toeplitz matrices, constructed from problem's data.
\end{enumerate}
The paper is organized as follows. Sections \ref{parcontmatr},
\ref{secS}, \ref{CRSCAL} deal with additional background material
concerning parametrization of $2\times 2$ contractive and unitary
block operator matrices, the theory of completely non-unitary
contractions, defect functions of
 the Schur class functions, and conservative realization of the Schur algorithm.
New results about the Kre\u\i n shorted operators are given in
Section \ref{KRSHOP}. Main results of the paper are presented in
Section \ref{MARE}. Relying on the results of Section \ref{KRSHOP},
we prove that the Kre\u\i n shorted operators
$\left\{\left(D^2_{T_k}\right)_\sM\uphar\sM\right\}$ forms a
non-increasing sequence, where $T_k=T_k(C_0, C_1,\ldots C_k)$ are
the Toeplitz operators constructed from the Schur sequence. We study
in more detail the central solution to the Schur problem and obtain
a uniqueness solution criteria. The latter is closed to results of
V.M.~Adamyan, D.Z.~Arov, and M.G.~Kre\u\i n obtained in \cite{AAK1}
and \cite{AAK2} concerning to scalar and operator Nehari problem
\cite{Nehari}. These authors did not use the Kre\u\i n shorted
operators in explicit form, their approach is essentially rely on
the extension theory of isometric operators. Different approaches to
the descriptions of all  solutions to the Schur problem can be found
in \cite{DFK} for finite dimensional $\sM$ and $\sN$, in \cite{BC,
FoFr} for general case. The Schur problem can be reduced to the
above mentioned Nehari problem \cite{ArovKrein}. All solutions to
this problem are obtained in \cite{AAK1, AAK2, ArovKrein, Kh} (see
also \cite{Peller}).

\section{Parametrization of contractive block-operator matrices}\label{parcontmatr}
Let $\sH,\,\sK,$ $\sM$ and $\sN$ be Hilbert spaces. The following
theorem goes back to \cite{AG, DaKaWe, ShYa}; other proofs of the
theorem can be found in \cite{ARL, AHS1, KolMal, Mal2, Peller}.
\begin{theorem} \label{ParContr1}
Let $A \in \bL(\cH,\cK)$, $B \in \bL(\sM,\cK)$, $C \in
\bL(\cH,\sN)$, and $D \in \bL(\sM,\sN)$.
The following conditions are equivalent:
\begin{enumerate}
\def\labelenumi{\rm (\roman{enumi})}
\item the operator
$
T= \begin{bmatrix} D&C\cr B&A \end{bmatrix}:
\begin{array}{l}\sM\\\oplus\\\cH\end{array}\to
\begin{array}{l}\sN\\\oplus\\\cK\end{array}
$
is a contraction;
\item the operator $A \in \bL(\cH,\cK)$ is a contraction and
\begin{equation}\label{twee}
B=D_{A^*}M,\; C=KD_{A},\;D=-KA^*M+D_{K^*}XD_{M},
\end{equation}
where $M\in\bL(\sM,\sD_{A^*})$,
$K\in\bL(\sD_{A},\sN)$, and $X\in\bL(\sD_{M},\sD_{K^*})$
are contractions;
\item  the operator $D\in\bL(\sM,\sN)$ is a contraction and
\begin{equation}
\label{BLOCKS}
B=FD_D,\; C=D_{D^*}G,\; A=-FD^*G+D_{F^*}LD_{G}.
\end{equation}
where the operators $F\in\bL(\sD_D,\cK)$, $G\in\bL(\cH,\sD_{D^*})$
and $L\in\bL(\sD_{G},\sD_{F^*})$ are contractions.
\end{enumerate}
Moreover, if $T$ is a contraction, then the operators $K,$ $M$, and $X$ in \eqref{twee} and
operators $F,\,G,$ and $L$ in \eqref{BLOCKS} are uniquely determined.
\end{theorem}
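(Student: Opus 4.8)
The plan is to reduce the two-sided completion problem to two one-sided (column and row) completions handled by Douglas's factorization lemma, and then to parametrize the remaining corner $D$ by a positive $2\times 2$ block criterion. The single tool I would invoke is the classical lemma: for operators $R,S$ one has $RR^*\le SS^*$ if and only if $R=SW$ for some contraction $W$, with $W$ unique once one imposes $\cran W^*\subseteq\cran S^*$. In column form this reads: $\bigl[\begin{smallmatrix}C\\ A\end{smallmatrix}\bigr]$ is a contraction exactly when $A$ is a contraction and $C^*C\le I-A^*A=D_A^2$, i.e.\ $C=KD_A$ for a unique contraction $K\in\bL(\sD_A,\sN)$; dually $\bigl[\begin{smallmatrix}B&A\end{smallmatrix}\bigr]$ is a contraction exactly when $B=D_{A^*}M$ for a unique contraction $M\in\bL(\sM,\sD_{A^*})$.

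The equivalence (i)$\Leftrightarrow$(ii) then rests on a single computation of $I-T^*T$. For (i)$\Rightarrow$(ii) I would first observe that a contraction restricts to contractions on its blocks: the second column $\bigl[\begin{smallmatrix}C\\ A\end{smallmatrix}\bigr]$ of $T$ and the second column $\bigl[\begin{smallmatrix}B^*\\ A^*\end{smallmatrix}\bigr]$ of $T^*$ are contractions, whence the column lemma gives $A$ contractive together with $C=KD_A$ and $B=D_{A^*}M$. Substituting these and using the intertwining identities $AD_A=D_{A^*}A$ and $A^*D_{A^*}=D_AA^*$ (the latter yielding $A^*(\sD_{A^*})\subseteq\sD_A$, so that $KA^*M$ is meaningful), together with $K^*K=I-D_K^2$, one finds that $I-T^*T$ has $(2,2)$ block $D_AD_K^2D_A=(D_AD_K)(D_AD_K)^*$, off-diagonal block $-D_A(K^*D+A^*M)$, and $(1,1)$ block $D_M^2-D^*D+M^*AA^*M$. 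By Shmul'yan's criterion for nonnegativity of a $2\times 2$ block operator, $I-T^*T\ge0$ forces the off-diagonal to factor through the square roots of the diagonal blocks, which is precisely the statement that $D+KA^*M=D_{K^*}XD_M$ for a contraction $X\in\bL(\sD_M,\sD_{K^*})$; the range/corange normalization $X\in\bL(\sD_M,\sD_{K^*})$ makes $X$ unique. For (ii)$\Rightarrow$(i) I would run the computation backwards: inserting also $D=-KA^*M+D_{K^*}XD_M$ and using $K^*D_{K^*}=D_KK^*$ turns the off-diagonal into $-D_AD_K\bigl(D_KA^*M+K^*XD_M\bigr)$ and the Schur complement of the $(2,2)$ block into $D_M(I-X^*X)D_M$, exhibiting $I-T^*T\ge0$ exactly when $\|X\|\le1$.

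Finally, (i)$\Leftrightarrow$(iii) requires no new work. Conjugating $T$ by the flip $J=\bigl[\begin{smallmatrix}0&I\\ I&0\end{smallmatrix}\bigr]$, which is unitary and hence preserves contractivity, carries $T$ to $\bigl[\begin{smallmatrix}A&B\\ C&D\end{smallmatrix}\bigr]$, whose lower-right corner is $D$. Applying the already-proved equivalence (i)$\Leftrightarrow$(ii) to this matrix and relabeling $K\rightsquigarrow F$, $M\rightsquigarrow G$, $X\rightsquigarrow L$ reproduces exactly \eqref{BLOCKS}, and the uniqueness of $F,G,L$ is inherited from that of $K,M,X$ in \eqref{twee}.

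I expect the main obstacle to be the middle step: the bookkeeping in the Schur-complement reduction of $I-T^*T$, and in particular verifying the range inclusions $\ran(D+KA^*M)\subseteq\ran D_{K^*}$ and $\ran(D+KA^*M)^*\subseteq\ran D_M$ that are needed for $X$ to exist and to be pinned down uniquely as an operator between the defect spaces $\sD_M$ and $\sD_{K^*}$. These inclusions are forced by the nonnegativity of the full block matrix, but extracting them cleanly demands careful use of the defect intertwining identities rather than any single formal manipulation.
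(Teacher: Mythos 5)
First, a point of reference: the paper does not actually prove Theorem \ref{ParContr1} --- it states it as a known result going back to \cite{AG, DaKaWe, ShYa}, with further proofs in \cite{ARL, AHS1, KolMal, Mal2, Peller}. So your proposal can only be measured against the classical arguments, and it is in substance exactly the standard route those sources take. Your reductions are sound: Douglas factorization applied to the second column of $T$ and the second column of $T^*$ gives $A$ contractive, $C=KD_A$, $B=D_{A^*}M$; your block computation of $I-T^*T$ is correct (the $(2,2)$ entry is $D_AD_K^2D_A$, the off-diagonal entry is $-D_A(K^*D+A^*M)$, and the $(1,1)$ entry is $D_M^2+M^*AA^*M-D^*D$); the backward substitution with $D=-KA^*M+D_{K^*}XD_M$, using $K^*D_{K^*}=D_KK^*$, indeed turns the off-diagonal into $-D_AD_K(D_KA^*M+K^*XD_M)$; and your claim that the resulting Schur complement is $D_M(I-X^*X)D_M$ is precisely what the paper itself records later, in Corollary \ref{iscois1}(1), via \eqref{shormat1}. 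The flip-conjugation derivation of (i)$\iff$(iii) is clean and correct (strictly you need two flips, one on $\sM\oplus\cH$ and one on $\sN\oplus\cK$, since the spaces differ, but that is cosmetic), and uniqueness of $K,M,X$ and hence of $F,G,L$ does follow from the density of $\ran D_A$, $\ran D_{A^*}$, $\ran D_M$ in the respective defect subspaces and injectivity of $D_{A^*}$, $D_{K^*}$ on $\sD_{A^*}$, $\sD_{K^*}$.

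The one step you assert rather than carry out --- and candidly flag as the main obstacle --- is the implication from positivity of $I-T^*T$ to $D+KA^*M=D_{K^*}XD_M$ with $X\in\bL(\sD_M,\sD_{K^*})$. Be aware that the raw Shmul'yan criterion \eqref{POZ} factors the off-diagonal through $S_{22}^{1/2}=\left(D_AD_K^2D_A\right)^{1/2}$, which produces a factor $D_K$ acting in $\sD_A$ on the \emph{wrong} side; converting this into $D_{K^*}$ on the left and $D_M$ on the right needs the intertwinings $KD_K=D_{K^*}K$, $A^*D_{A^*}=D_AA^*$ and a further positivity argument in the $(1,1)$ corner, so ``precisely the statement that $D+KA^*M=D_{K^*}XD_M$'' overstates the immediacy. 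The most economical way to close the gap is to prove directly the bilinear estimate $|((D+KA^*M)f,g)|\le \|D_Mf\|\,\|D_{K^*}g\|$ for $f\in\sM$, $g\in\sN$, extracted from $\|T(f\oplus h)\|^2\le\|f\|^2+\|h\|^2$ with a suitably chosen $h\in\cH$; the bounded form then defines $X$ on the dense set $\ran D_M$ against $\ran D_{K^*}$, giving existence, contractivity and uniqueness in one stroke --- this is how the cited proofs proceed. One small slip in your statement of the Douglas lemma: the uniqueness normalization should read $\cran W\subseteq\cran S^*$ (your condition $\cran W^*\subseteq\cran S^*$ does not typecheck, as $W^*$ and $S^*$ have different target spaces). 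With these repairs your argument is complete and coincides with the literature the paper relies on.
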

\begin{corollary}
\label{iscois1} \cite{ARL1}, \cite{OAM2009}. Let
\[
T=
\begin{bmatrix}-KA^*M+D_{K^*}XD_{M} &KD_{A} \cr
D_{A^*}M&A\end{bmatrix}=\begin{bmatrix} D&D_{D^*}G\cr FD_D&-FD^*G+D_{F^*}LD_G
\end{bmatrix}:
\begin{array}{l}\sM\\\oplus\\\cH\end{array}\to
\begin{array}{l}\sN\\\oplus\\\cK\end{array}
\]
be a contraction. Then
\begin{enumerate}
\item
$(D^2_T)_\sM=D_MD^2_XD_MP_\sM,\; (D^2_{T^*})_\sN=D_{K^*}D^2_{X^*}D_{K^*}P_\sN,$
\item $(D^2_T)_\cH=D_GD^2_LD_GP_\cH,\;  (D^2_{T^*})_\cK=D_{F^*}D^2_{L^*}D_{F^*}P_\cK,$
\item $T$ is isometric if
and only if
\[
D_KD_A=0,\;D_XD_M=0,\;D_FD_D=0,\; D_LD_G=0,
\]
\item
$T$ is co-isometric if and only if
\[
D_{M^*}D_{A^*}=0,\; D_{X^*}D_{K^*}=0,\;D_{G^*}D_{D^*}=0,\; D_{L^*}D_{F^*}=0.
\]
\end{enumerate}
If $T$ is unitary, then $D_{K^*}=0$ $\iff$ $D_{M}=0$ and
$D_{F^*}=0\iff D_G=0$.
\end{corollary}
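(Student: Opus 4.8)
The plan is to establish the four shorted-operator formulas first; the isometry and co-isometry criteria and the final unitary statement then follow by elementary positivity and dimension arguments. Moreover, all four formulas reduce to a single core computation, that of $(D^2_T)_\sM=D_MD^2_XD_MP_\sM$: the formula for $(D^2_{T^*})_\sN$ is obtained by applying the core identity to $T^*$, whose parametrization \eqref{twee} is carried by $A^*,K^*,M^*,X^*$ in place of $A,M,K,X$; and the two formulas involving $\cH,\cK$ are obtained by applying it to the flipped contraction $\widehat T=\begin{bmatrix}A&B\cr C&D\end{bmatrix}$ and to $\widehat T^{\,*}$, since by \eqref{BLOCKS} $\widehat T$ is parametrized with corner $D$ and data $F,G,L$, and a flip is a unitary conjugation intertwining the shortening to $\cH$ with the shortening to the input space.

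For the core identity I would start from the infimum form \eqref{Sh1} with $\cK=\sM$, so that $\cK^\perp=\cH$ and, for $f\in\sM$,
\[
((D^2_T)_\sM f,f)=\inf_{\varphi\in\cH}\|D_T(f\oplus\varphi)\|^2=\inf_{\varphi\in\cH}\big(\|f\|^2+\|\varphi\|^2-\|Df+C\varphi\|^2-\|Bf+A\varphi\|^2\big).
\]
Substituting $B=D_{A^*}M$, $C=KD_A$, $D=-KA^*M+D_{K^*}XD_M$ and using $\|\varphi\|^2-\|A\varphi\|^2-\|KD_A\varphi\|^2=\|D_KD_A\varphi\|^2$, the dependence on $\varphi$ collapses to a quadratic in $u:=D_KD_A\varphi$, the linear part being controlled by $C^*D+A^*B$. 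Minimizing over $u$ running through $\cran(D_KD_A)=\sD_K$ and completing the square gives
\[
((D^2_T)_\sM f,f)=(S_{11}f,f)-\|P\,Wf\|^2,\qquad S_{11}=I-D^*D-B^*B,
\]
where $P$ is the orthogonal projection onto $\sD_K$ and $W=-(D_KA^*M+K^*XD_M)$.

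The step I expect to be the main obstacle is to show that both the projection $P$ and an apparent surplus term disappear, leaving exactly $D_MD^2_XD_M$. Two intertwining relations do the work. Since $A^*D_{A^*}=D_AA^*$ gives $A^*(\sD_{A^*})\subseteq\sD_A$ and $\ran M\subseteq\sD_{A^*}$, the operator $A^*M$ takes values in $\sD_A$; consequently the component of $A^*M$ along $\ker D_A$, which a priori contributes a term to $B^*B$ through $M^*AA^*M$, vanishes. Since $K^*D_{K^*}=D_KK^*$ gives $K^*(\sD_{K^*})\subseteq\sD_K$ and trivially $\ran D_K\subseteq\sD_K$, one has $\ran W\subseteq\sD_K$, hence $PW=W$. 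Granting these two facts, a direct expansion using $K^*K=I-D^2_K$, $KK^*=I-D^2_{K^*}$, $X^*X=I-D^2_X$ and $M^*M=I-D^2_M$ collapses $S_{11}-W^*W$ to $D_MD^2_XD_M$, which proves the core identity; the other three formulas follow by the symmetries described above.

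It remains to deduce the rest. A direct computation gives the diagonal blocks $S_{22}=I-C^*C-A^*A=(D_KD_A)^*(D_KD_A)$ and, from \eqref{BLOCKS}, $S_{11}=(D_FD_D)^*(D_FD_D)$. Since $T$ is isometric iff $D^2_T=0$, and a nonnegative block operator whose diagonal block vanishes has the adjacent off-diagonal block equal to zero, one gets $D^2_T=0\iff S_{22}=0$ and $(D^2_T)_\sM=0\iff D_KD_A=0$ and $D_XD_M=0$, using $D_MD^2_XD_M=(D_XD_M)^*(D_XD_M)$; reading the same equivalence through $S_{11}$ and $(D^2_T)_\cH$ yields $D_FD_D=0$ and $D_LD_G=0$, so the four conditions are simultaneously equivalent to isometry. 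The co-isometry criterion is the isometry criterion applied to $T^*$. Finally, if $T$ is unitary the isometry and co-isometry criteria hold at once, so $D_XD_M=0$ and $D_{X^*}D_{K^*}=0$, i.e. $X$ is both isometric on $\sD_M$ and co-isometric on $\sD_{K^*}$, hence a unitary operator from $\sD_M$ onto $\sD_{K^*}$; therefore $D_M=0\iff D_{K^*}=0$, and likewise $L$ unitary gives $D_G=0\iff D_{F^*}=0$.
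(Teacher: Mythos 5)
Your proposal is correct, but there is nothing in the paper to compare it against line by line: Corollary \ref{iscois1} is imported from \cite{ARL1} and \cite{OAM2009} without proof, so your blind argument has to stand on its own --- and it does. The reduction of all four formulas to the single identity $(D^2_T)_\sM=D_MD^2_XD_MP_\sM$ is legitimate: $T^*$ admits the \eqref{twee}-type representation with data $(A^*,K^*,M^*,X^*)$ in place of $(A,M,K,X)$, the flipped matrix admits it with data $(D,G,F,L)$ (this is exactly what \eqref{BLOCKS} says, and the uniqueness clause of Theorem \ref{ParContr1} makes the identification canonical), and shortening commutes with the unitary flip. The core computation checks out in every detail: the dependence of $\|D_T(f\oplus\varphi)\|^2$ on $\varphi$ enters only through $u=D_KD_A\varphi$, since $\|\varphi\|^2-\|A\varphi\|^2-\|KD_A\varphi\|^2=\|u\|^2$ and $C^*D+A^*B=D_AD_K\left(D_KA^*M+K^*XD_M\right)=-D_AD_KW$; the vectors $u$ fill a dense subset of $\sD_K=\cran(D_KD_A)$, so the infimum \eqref{Sh1} equals $(S_{11}f,f)-\|P_{\sD_K}Wf\|^2$; the inclusions $\ran D_K\subset\sD_K$ and $K^*\sD_{K^*}\subset\sD_K$ (from $K^*D_{K^*}=D_KK^*$) give $P_{\sD_K}W=W$; and the expansion indeed collapses, because $D^*D+W^*W=M^*AA^*M+D_MX^*XD_M$, whence $S_{11}-W^*W=I-M^*M-D_MX^*XD_M=D_MD^2_XD_M$. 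The endgame is also sound: $I-C^*C-A^*A=(D_KD_A)^*(D_KD_A)$ and $I-D^*D-B^*B=(D_FD_D)^*(D_FD_D)$; a vanishing diagonal block of the nonnegative operator $D^2_T$ forces the adjacent off-diagonal block to vanish by \eqref{POZ}, and then \eqref{shormat1} identifies the corner of $(D^2_T)_\sM$ with $S_{11}$, so each of your two pairs of conditions is by itself equivalent to $D^2_T=0$, hence so is the conjunction of all four; co-isometry is isometry of $T^*$ under the dual parameters; and in the unitary case $X\in\bL(\sD_M,\sD_{K^*})$ and $L\in\bL(\sD_G,\sD_{F^*})$ become unitary, which yields $D_M=0\iff D_{K^*}=0$ and $D_G=0\iff D_{F^*}=0$. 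One phrasing quibble, not a gap: your remark that ``the component of $A^*M$ along $\ker D_A$ \ldots vanishes'' is really just the observation that $\ran(A^*M)\subset\sD_A$ (a consequence of $A^*D_{A^*}=D_AA^*$ and $\ran M\subset\sD_{A^*}$), which is what licenses writing $K^*KA^*M=(I_{\sD_A}-D^2_K)A^*M$; nothing is being cancelled inside $B^*B$ itself, which contains the term $-M^*AA^*M$ unconditionally.
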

Let us give connections between the parametrization of a unitary
block-operator matrix given by \eqref{twee} and \eqref{BLOCKS}.
\begin{proposition}
\label{uncion} \cite[Proposition 4.7]{OAM2009}. Let
\[
\begin{array}{l}
U=
\begin{bmatrix} -KA^*M+D_{K^*}XD_{M}& KD_{A}\cr
D_{A^*}M &A\end{bmatrix}\\
 \qquad\quad=\begin{bmatrix}D&D_{D^*}G\cr FD_D
&-FD^*G+D_{F^*}LD_{G}\end{bmatrix}:
\begin{array}{l}\sM\\\oplus\\\cH\end{array}\to
\begin{array}{l}\sN\\\oplus\\\cH\end{array}
\end{array}
\]
be a unitary operator matrix. Then 
\begin{equation}
\label{connect1} D_D=M^*D_{A^*}M,\;\sD_D=\ran
M^*,\;D_{D^*}=KD_AK^*,\; \sD_{D^*}=\ran K,
\end{equation}
\begin{equation}
\label{conect2} F^*=M^*P_{\sD_{A^*}}, \;F=M\uphar\sD_D,\;
G=KP_{\sD_{A}},\; G^*=K^*\uphar\sD_{D^*},
\end{equation}
\begin{equation}
\label{connect4}
 GFf=KP_{\sD_{A}}M f,\; f\in\sD_D.
\end{equation}
\end{proposition}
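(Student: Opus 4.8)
The plan is to take the unitary operator matrix $U$ written simultaneously in the two parametrizations of Theorem \ref{ParContr1}, and to read off the relations \eqref{connect1}--\eqref{connect4} by comparing the corresponding entries and invoking the uniqueness clause of that theorem. Since $U$ is unitary, both $U^*U=I$ and $UU^*=I$ hold, and these two identities are precisely what feed the defect-operator computations below.

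First I would establish \eqref{connect1}. The $(2,2)$ block $A$ is common to both representations, so $A$ is the same contraction in each. Writing out $U^*U=I$ at the $(1,1)$ entry gives $D^*D+D_D^*F^*FD_D=I$, i.e. $D_D^2=I-D^*D=D_{?}$; more directly, I would use the first parametrization's block structure. The bottom-left entry reads $B=D_{A^*}M=FD_D$ and the top-right reads $C=KD_A=D_{D^*}G$. To get $D_D=M^*D_{A^*}M$, I would compute $D_D^2=I-D^*D$ using the first-parametrization formula $D=-KA^*M+D_{K^*}XD_M$ together with the unitarity constraints furnished by Corollary \ref{iscois1}: unitarity forces both the isometric and co-isometric conditions, so in particular $D_XD_M=0$ and $D_{X^*}D_{K^*}=0$, which means $X$ acts isometrically on $\ran D_M$ and co-isometrically into $\ran D_{K^*}$. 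These degeneracies collapse the cross-terms in $I-D^*D$ and leave exactly $M^*D_{A^*}^2M$ after simplification, whence $D_D=M^*D_{A^*}M$ and $\sD_D=\cran(M^*D_{A^*})=\ran M^*$ (the last equality using that $M^*$ has closed range on the relevant subspace via the isometric behavior of $X$). The formula $D_{D^*}=KD_AK^*$ and $\sD_{D^*}=\ran K$ follow by the symmetric computation applied to $UU^*=I$, or equivalently by passing to $U^*$ and using the adjoint parametrization.

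Next I would derive \eqref{conect2}. From $B=D_{A^*}M=FD_D$ and $\sD_D=\ran M^*$, I would check that $F:=M\uphar\sD_D$ is well defined and satisfies $FD_D=D_{A^*}M$ on the nose, using \eqref{connect1}. The adjoint relation $F^*=M^*P_{\sD_{A^*}}$ then follows by computing $\langle Ff,g\rangle$ for $f\in\sD_D$, $g\in\cK$, and recognizing that $F$ only sees the $\sD_{A^*}$-component of its target. The pair $G=KP_{\sD_A}$, $G^*=K^*\uphar\sD_{D^*}$ is obtained the same way from $C=KD_A=D_{D^*}G$ and $\sD_{D^*}=\ran K$. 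Finally, \eqref{connect4} is immediate: for $f\in\sD_D$ one has $GFf=G(Mf)=KP_{\sD_A}Mf$, which is exactly the claimed identity.

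The main obstacle will be the bookkeeping in \eqref{connect1}: verifying that the cross-terms in $I-D^*D$ genuinely vanish requires careful use of the defect relations $TD_T=D_{T^*}T$ for the inner contractions $A$, $X$, $K$, $M$, together with the four degeneracy conditions from Corollary \ref{iscois1} that unitarity of $U$ imposes. One must be attentive that these degeneracies hold simultaneously (both isometric and co-isometric lists), since only a unitary $U$ gives all of them, and it is the interplay $D_XD_M=0$ with $D_{X^*}D_{K^*}=0$ that forces $X$ to restrict to a unitary identification between $\ran D_M$ and $\ran D_{K^*}$. Once that identification is in hand the range statements $\sD_D=\ran M^*$ and $\sD_{D^*}=\ran K$ become transparent, and the rest of the proposition is routine entry-matching backed by the uniqueness assertion of Theorem \ref{ParContr1}.
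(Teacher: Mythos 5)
First, a point of order: the paper does not prove Proposition \ref{uncion} at all --- it is imported from \cite[Proposition 4.7]{OAM2009} --- so there is no in-paper argument to compare with, and your proposal has to stand on its own. On its own it is essentially correct, and its skeleton (match the two parametrizations of Theorem \ref{ParContr1}, invoke the uniqueness clause, and feed in the degeneracy conditions that Corollary \ref{iscois1} extracts from unitarity) is the natural one. Two repairs and one simplification are in order. The simplification: you do not need to expand $D=-KA^*M+D_{K^*}XD_M$ at all, because unitarity gives the column identity $D^*D+B^*B=I_\sM$ directly, whence $D_D^2=B^*B=M^*D_{A^*}^2M$, and similarly $D_{D^*}^2=CC^*=KD_A^2K^*$ from the row identity; it then only remains to check $M^*D_{A^*}M\ge 0$ and $(M^*D_{A^*}M)^2=M^*D_{A^*}^2M$, both consequences of $MM^*=I_{\sD_{A^*}}$, and to take the positive square root. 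The repairs concern your attributions within the longer computation you actually sketch: the cross-terms of $D^*D$ vanish because $D_KD_A=0$ forces $D_K=0$, hence $K^*D_{K^*}=D_KK^*=0$ --- not because of the $X$-conditions you single out; and the closedness of $\ran M^*$, needed for $\sD_D=\ran M^*$, comes from $M^*$ being isometric on $\sD_{A^*}$, i.e. from $D_{M^*}D_{A^*}=0$, not from ``the isometric behavior of $X$''. The conditions $D_XD_M=0$ and $D_{X^*}D_{K^*}=0$ enter only to show that the remaining term satisfies $D_MX^*D_{K^*}^2XD_M=D_M^2$ (note $D_{K^*}$ is a projection once $K$ is isometric), which cancels against $I-M^*AA^*M=D_M^2+M^*D_{A^*}^2M$. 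With these points fixed, your verification-plus-uniqueness treatment of \eqref{conect2}, using $MD_D=MM^*D_{A^*}M=D_{A^*}M$ and $D_{D^*}KP_{\sD_A}=KD_A$, and the composition \eqref{connect4} are correct as written.
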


\section{Completely non-unitary contractions}  \label{secS}

A contraction $A$ acting in a Hilbert space $\sH$ is called
\textit{completely non-unitary} \cite{SF} if there is no nontrivial
reducing subspace of $A$, on which $A$ generates a unitary operator.
Given a contraction $A$ in $\sH$, then there is a canonical
orthogonal decomposition \cite [Theorem I.3.2]{SF}
\[
\sH=\sH_0\oplus \sH_1, \qquad A=A_0\oplus A_1, \quad A_j=A\uphar
\sH_j, \quad j=0,1,
\]
where $\sH_0$ and $\sH_1$ reduce $A$, the operator $A_0$ is a
completely non-unitary contraction, and $A_1$ is a unitary operator.
Moreover,
\[
\sH_1= \left(\bigcap\limits_{n\ge 1}\ker
D_{A^n}\right)\bigcap\left(\bigcap\limits_{n\ge 1}\ker
D_{A^{*n}}\right).
\]
Since
\begin{equation}
\label{INntersec}
\bigcap\limits_{k=0}^{n-1} \ker(D_{A}A^{k})=\ker D_{A^{n}},\;
\bigcap\limits_{k=0}^{n-1} \ker(D_{A^*}A^{*k})=\ker D_{A^{*n}},
\end{equation}
we get
\begin{equation}
\label{perp}
\begin{array}{l}
\bigcap\limits_{n\ge 1}\ker D_{A^n}=\sH\ominus\cspan\left\{A^{*n}D_A
\sH,\;
n\in\dN_0\right\},\\
\bigcap\limits_{n\ge 1}\ker
D_{A^{*n}}=\sH\ominus\cspan\left\{A^nD_{A^*} \sH,\;
n\in\dN_0\right\}.
\end{array}
\end{equation}
It follows that
\begin{equation}
\label{cu}\begin{array}{l}
 A \;\mbox{is completely
non-unitary}\;\iff\left(\bigcap\limits_{n\ge 1}\ker
D_{A^n}\right)\bigcap\left(\bigcap\limits_{n\ge 1}\ker
D_{A^{*n}}\right)=\{0\}\\
 \iff \cspan\{A^{*n}D_{A},\;A^mD_{A^*},\;n,m\in\dN_0\}=\sH.
\end{array}
\end{equation}
Note that
\[
\ker D_{A}\supset\ker D_{A^2}\supset\cdots\supset\ker
D_{A^n}\supset\cdots,
\]
\[
A\ker D_{A^n}\subset\ker D_{A^{n-1}},\; n=2,3,\ldots.
\]
From \eqref{perp} we get that the subspaces $\bigcap\limits_{n\ge
1}\ker D_{A^n}$ and $\bigcap\limits_{n\ge 1}\ker D_{A^{*n}}$ are
invariant with respect to $A$ and $A^*$, respectively, and the
operators $A\uphar\bigcap\limits_{n\ge 1}\ker D_{A^n}$ and
$A^*\uphar\bigcap\limits_{n\ge 1}\ker D_{A^{*n}}$ are unilateral
shifts, moreover, these operators are the maximal unilateral shifts
contained in $A$ and $A^*$, respectively \cite[Theorem 1.1,
Corollary 1]{DR}. By definition \cite{DR} the operator $A$ contains
a co-shift $V$ if the operator $A^*$ contains the unilateral shift
$V^*$. In accordance with the terminology of \cite{Ball-Coehn}, a
contraction $A$ in $\sH$ is called \textit{completely non-isometric}
(\textit{c.n.i.}) if there is no nonzero invariant subspace for $A$
on which $A$ is isometric. This equivalent to (see
\cite{Ball-Coehn})
\[
\bigcap\limits_{n\ge 1}\ker D_{A^n}=\{0\}.
\]
A contraction $A$ is called \textit{completely non-co-isometric}
(\text{c.n.c.-i.}) if $A^*$ is completely non-isometric. Thus, for a
completely non-unitary contraction $A$ we have
\begin{equation}
\label{SHTCOSHT}
\begin{array}{l}
\bigcap\limits_{n\ge 1}\ker D_{A^n}=\{0\}\iff A\;\mbox{is
c.n.i.}\iff A\;\mbox{does
not contain a unilateral shift},\\
\bigcap\limits_{n\ge 1}\ker D_{A^{*n}}=\{0\}\iff  A\;\mbox{is
c.n.c.-i.}\iff  A^*\;\mbox{does not contain a unilateral shift}.
\end{array}
\end{equation}

If $\tau=\left\{\begin{bmatrix} D&C \cr B&A\end{bmatrix};\sM,\sN,
\sH\right\}$ is a conservative system, then
 $\tau$ is simple if and only if the state space operator $A$ is a
completely non-unitary contraction \cite{Br1,Ball-Coehn}. Moreover,
\[
\sH^c_\tau=\cspan\{A^{n}D_{A^*},\; n\in\dN_0\},\; \sH^0_
\tau=\cspan\{A^{*n}D_{A},\; n\in\dN_0\}.
\]
Let $A$ be a contraction in a separable Hilbert space $\sH$. Suppose
$\ker D_A\ne \{0\}$. Define the subspaces \cite{OAM2009}
\begin{equation}
\label{hnm} \left\{
\begin{array}{l}
\sH_{0,0}:=\sH\\
 \sH_{n,0}=\ker D_{A^n},\;
  \sH_{0,m}:=\ker
D_{A^{*m}},\\
\sH_{n, m}:=\ker D_{A^{n}}\cap \ker D_{A^{*m}},\; m,n\in\dN.
\end{array}
\right.
\end{equation}
Let $P_{n,m}$ be the orthogonal projection in $\sH$ onto
$\sH_{n,m}$. Define the contractions \cite{OAM2009}:
\begin{equation}
\label{Anm}
 A_{n, m}:=P_{n,
m}A\uphar\sH_{n, m}\in\bL(\sH_{n,m}).
\end{equation}
Observe that (see  \cite{OAM2009})
the following relations are valid:
\begin{equation}
\label{AKNM}
 \ker D_{A^k_{n,m}}=\sH_{n+k,m},\;\\
 \ker D_{A^{*k}_{n,m}}=\sH_{n,m+k}.\;
 k=1,2,\ldots,
 \end{equation}
\begin{equation} \label{UE1}
 \left(A_{n,m}\right)_{k,l}=A_{n+k,m+l},
\end{equation}
the operators $A\uphar\sH_{n,m}\in\bL(\sH_{n,m},\sH_{n-1, m+1})$ are unitary,
$A_{n-1,m}\uphar\sH_{n,m}=A\uphar\sH_{n,m}$, and
\[
A_{n-1,m+1}Af=AA_{n,m}f,\;f\in\sH_{n,m},\; n\ge 1,
\]
i.e., the operators
 \[
A_{n,0},\;A_{n-1,1},\; \ldots, A_{n-k,k},\ldots,A_{0,n}
\]
are unitarily equivalent.
 The relation \eqref{UE1} yields the following picture for the
creation of the operators $A_{n,m}$:

\xymatrix{
&&&A\ar[ld]\ar[rd]\\
&&A_{1,0}\ar[ld]\ar[rd]&&A_{0,1}\ar[ld]\ar[rd]\\
&A_{2,0}\ar[ld]\ar[rd]&&A_{1,1}\ar[ld]\ar[rd]&&A_{0,2}\ar[ld]\ar[rd]\\
A_{3,0}&&
A_{2,1}&&A_{1,2}&&A_{0,3}\\
\cdots&\cdots&\cdots&\cdots&\cdots&\cdots&\cdots}

The process terminates at the $N$-th step if and only if
\[
\begin{array}{l}
\ker D_{A^N}=\{0\}\iff \ker D_{A^{N-1}}\cap\ker
D_{A^*}=\{0\}\iff\ldots\\
\iff\ker D_{A^{N-k}}\cap\ker D_{A^{*k}}=\{0\}\iff\ldots \ker
D_{A^{*N}}=\{0\}.
\end{array}
\]


The following result \cite[Proposition V.4.2]{SF} is needed in the
sequel.
\begin{theorem}\label{Fact}
Let $\sM$ be a separable Hilbert space and let $N(\xi)$,
$\xi\in\dT$,  be an $\bL(\sM)$-valued measurable function such that
$0\le N(\xi)\le I$. Then there exist a Hilbert space $\sK$ and an
outer function $\varphi(\lambda) \in {\bf S}(\sM,\sK)$ satisfying
the following conditions:
\begin{enumerate}
\item $\varphi^*(\xi)\varphi(\xi)\le N^2(\xi)$ a.e.
on $\dT$;

\item if $\wt \sK$ is a Hilbert space and $\wt\varphi(\lambda)\in
{\bf S}(\sM,\wt\sK)$ is such that
$\wt\varphi^*(\xi)\wt\varphi(\xi)\le N^2(\xi)$ a.e.  on $\dT$, then
$\wt\varphi^*(\xi)\wt\varphi(\xi)\le \varphi^*(\xi)\varphi(\xi)$
a.e.  on $\dT$.
\end{enumerate}
Moreover, the function $\varphi(\lambda)$ is uniquely defined up to
a left constant unitary factor.
\end{theorem}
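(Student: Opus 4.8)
The plan is to realize the maximal outer minorant via a Wold decomposition of an auxiliary isometry. On the vector Hardy space $H^2(\sM)$ I would introduce the nonnegative sesquilinear form
\[
[f,g]:=\int_{\dT}\bigl(N^2(\xi)f(\xi),g(\xi)\bigr)_\sM\,dm(\xi),
\]
which, since $0\le N\le I$, satisfies $0\le[f,f]\le\|f\|^2_{H^2(\sM)}$. Factoring out the kernel $\cN=\{f:[f,f]=0\}$ and completing yields a Hilbert space $\cH$ with a contraction $\iota\colon H^2(\sM)\to\cH$ of dense range. Because $|\xi|=1$ on $\dT$ and multiplication by $N$ commutes with multiplication by $\lambda$, the shift $S$ on $H^2(\sM)$ preserves the form, so it descends to an \emph{isometry} $V$ on $\cH$ with $V\iota=\iota S$.

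Next I would apply the Wold decomposition $\cH=\cH_s\oplus\cH_u$, where $\cH_s=\bigoplus_{n\ge0}V^n\cW$ with wandering subspace $\cW=\cH\ominus V\cH$ and $V\uphar\cH_u$ unitary. Set $\sK:=\cW$ and let $W\colon\cH_s\to H^2(\sK)$ be the Fourier isomorphism $V^n w\mapsto\lambda^n w$. Writing $E\colon\sM\to\cH$ for $Em=\iota(m)$ (the image of the constant function $m$), define $\varphi(\lambda)m:=\bigl(WP_{\cH_s}Em\bigr)(\lambda)$. Since $P_{\cH_s}$ commutes with $V$ and $\iota S=V\iota$, a short computation on the dense set $\cspan\{S^n m\}$ gives $WP_{\cH_s}\iota f=\varphi\cdot f$ (boundary-value multiplier) for all $f\in H^2(\sM)$, whence
\[
\int_{\dT}\|\varphi(\xi)f(\xi)\|^2\,dm=\|P_{\cH_s}\iota f\|^2\le[f,f]=\int_{\dT}\|N(\xi)f(\xi)\|^2\,dm\le\|f\|^2 .
\]
Thus $\varphi$ is a contractive analytic multiplier, i.e.\ $\varphi\in{\bf S}(\sM,\sK)$; and it is outer because $\iota(H^2(\sM))$ is dense, so $\{V^nP_{\cH_s}Em\}$ spans $\cH_s$, which under $W$ reads $\cspan\{\lambda^n\varphi\sM\}=H^2(\sK)$.

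For condition (1) I would upgrade the integrated bound above to a pointwise one: for fixed $m$ the real $L^1$ function $g=\bigl((\varphi^*\varphi-N^2)m,m\bigr)$ satisfies $\int|p|^2g\,dm\le0$ for every analytic polynomial $p$; by Fej\'er--Riesz every nonnegative trigonometric polynomial is such a $|p|^2$, and these are dense among nonnegative continuous functions, forcing $g\le0$ a.e., so running $m$ over a countable dense subset of the separable $\sM$ yields $\varphi^*\varphi\le N^2$ a.e. For condition (2), a competitor $\wt\varphi\in{\bf S}(\sM,\wt\sK)$ with $\wt\varphi^*\wt\varphi\le N^2$ gives $\int\|\wt\varphi f\|^2\le[f,f]$, so $\iota f\mapsto\wt\varphi f$ extends to a contraction $\Lambda\colon\cH\to H^2(\wt\sK)$ intertwining $V$ with the shift; since $\Lambda h=S^n\Lambda V_u^{-n}h\in\bigcap_n S^nH^2(\wt\sK)=\{0\}$ for $h\in\cH_u$, the intertwiner annihilates the unitary part, whence $\Lambda W^{-1}$ is a contractive shift-intertwiner, i.e.\ multiplication by some $\Omega\in{\bf S}(\sK,\wt\sK)$, giving $\wt\varphi=\Omega\varphi$ and $\wt\varphi^*\wt\varphi=\varphi^*\Omega^*\Omega\varphi\le\varphi^*\varphi$.

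Finally, uniqueness up to a left constant unitary factor follows by applying (2) to two maximal minorants to get $\varphi_1^*\varphi_1=\varphi_2^*\varphi_2$ a.e., writing $\varphi_2=\Omega\varphi_1$ with $\Omega^*\Omega=I$ a.e., and noting that an isometric analytic multiplier relating two outer functions whose shifted ranges generate all of $H^2$ must be a constant unitary. The two steps I expect to be most delicate are the passage from the single integrated inequality to the pointwise operator inequality $\varphi^*\varphi\le N^2$ (which leans on Fej\'er--Riesz together with separability of $\sM$), and the intertwining argument showing that $\Lambda$ kills $\cH_u$ and that its shift part is an analytic multiplier, as this is precisely where the completely non-unitary structure of the model shift is used.
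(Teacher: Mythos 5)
Your proof is correct and is essentially the same argument as the one standing behind this statement: the paper does not prove it but quotes it from Sz.-Nagy--Foias \cite[Proposition V.4.2]{SF}, whose proof is precisely your construction with the abstract completion $\cH$ realized concretely as $\overline{N\,H^2(\sM)}\subset L^2(\sM)$ (via $\iota f\mapsto Nf$) and the Wold decomposition applied to multiplication by $e^{it}$ on that subspace. The only compressed spot is the uniqueness step, where ``$\Omega^*\Omega=I$ a.e.'' holds at first only on $\overline{\ran\varphi_1(\xi)}$ and should be upgraded, using outerness of $\varphi_1$, to the statement that $M_\Omega$ is an isometry of $H^2(\sK_1)$ onto $H^2(\sK_2)$ with multiplier inverse, whence $\Omega(\lambda)$ is unitary for every $\lambda\in\dD$ and the maximum modulus principle makes it constant---exactly as in the standard treatment.
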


Assume that $\Theta\in {\bf S}(\sM,\sN)$ and denote by
$\varphi_\Theta(\xi)$ and $\psi_\Theta(\xi)$, $\xi \in \dT$ the
outer functions which are solutions of the factorization problem
described in Theorem \ref{Fact} for
$N^2(\xi)=I-\Theta^*(\xi)\Theta(\xi)$ and
$N^2(\bar\xi)=I-\Theta(\bar\xi)\Theta^*(\bar\xi)$, respectively.
Clearly, if $\Theta(\lambda)$ is inner or co-inner, then
$\varphi_\Theta=0$ or $\psi_\Theta=0$, respectively.  The functions
$\f_\Theta(\lambda)$ and $\psi_\Theta(\lambda)$ are called the right
and left \textit{defect functions} (or the \textit{spectral
factors}), respectively, associated with $\Theta(\lambda)$; cf.
\cite{BC, BD, BDFK1, BDFK2, DR}.
 The following result has been established in  \cite[Theorem 1.1, Corollary 1]{DR}
(see also \cite[Theorem 3]{BDFK1}, \cite[Theorem 1.5]{BDFK2}).
\begin{theorem}
\label{DBR} Let $\Theta\in {\bf S}(\sM,\sN)$ and let $
\tau=\left\{\begin{bmatrix} D&C \cr
B&A\end{bmatrix};\sM,\sN,\sH\right\}$
be a simple conservative system with transfer function $\Theta$.
Then
\begin{enumerate}
\item the functions $\varphi_\Theta(\lambda)$ and $\psi_\Theta(\lambda)$
take the form
\[
\begin{array}{l}
 \varphi_\Theta(\lambda)=P_\Omega(I_\sH-\lambda A)^{-1}B,\\
\psi_\Theta(\lambda)=C(I_\sH-\lambda A)^{-1}\uphar\Omega_*,
\end{array}
\]
where
\[
\Omega=(\sH^o_\tau)^\perp\ominus A(\sH^o_\tau)^\perp ,\;
\Omega_*=(\sH^c_\tau)^\perp\ominus A^*(\sH^c_\tau)^\perp;
\]
\item
$\varphi_\Theta(\lambda)=0$ ($\psi_\Theta(\lambda)=0$) if and only
if the system $\tau$ is observable (controllable).
\end{enumerate}
\end{theorem}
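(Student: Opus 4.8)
The plan is to deduce the statement from the unitarity of the colligation $T_\tau$, the shift geometry of the state operator recorded in \eqref{perp}--\eqref{SHTCOSHT}, and the extremal description of the defect functions in Theorem~\ref{Fact}. I would prove the assertion about $\varphi_\Theta$ in full and then obtain the one about $\psi_\Theta$ by symmetry: the adjoint colligation $\tau^*=\left\{\begin{bmatrix}D^*&B^*\cr C^*&A^*\end{bmatrix};\sN,\sM,\sH\right\}$ is again simple and conservative, its transfer function is $\wt\Theta$, and its observable subspace equals $\sH^c_\tau$; applying the $\varphi$-result to $\tau^*$ and using the duality between the left defect function of $\Theta$ and the right defect function of $\wt\Theta$ yields the formula $\psi_\Theta(\lambda)=C(I-\lambda A)^{-1}\uphar\Omega_*$ together with the equivalence ``$\psi_\Theta=0\iff\tau$ controllable''.

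The first concrete step is the kernel identity. Writing out $T_\tau^*T_\tau=I$ gives $D^*D+B^*B=I$, $D^*C+B^*A=0$, $C^*C+A^*A=I$, and substituting these into $\Theta(\lambda)=D+\lambda C(I-\lambda A)^{-1}B$, a direct computation using $\lambda A(I-\lambda A)^{-1}=(I-\lambda A)^{-1}-I$ produces, for $\lambda,\mu\in\dD$,
\[
I-\Theta(\mu)^*\Theta(\lambda)=(1-\bar\mu\lambda)\,B^*(I-\bar\mu A^*)^{-1}(I-\lambda A)^{-1}B,
\]
and, from $T_\tau T_\tau^*=I$, the dual identity with $C$, $(I-\lambda A)^{-1}(I-\bar\mu A^*)^{-1}$ and $C^*$. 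In particular $I-\Theta(\lambda)^*\Theta(\lambda)=(1-|\lambda|^2)F(\lambda)^*F(\lambda)$ with $F(\lambda):=(I-\lambda A)^{-1}B$, which underlies the positivity of the kernels used below.

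Next I would fix the geometry. Since the colligation is conservative, $C^*C=D_A^2$, so $\ker C=\ker D_A$ and $\cran C^*=\sD_A$; hence $\sH^o_\tau=\cspan\{A^{*n}D_A\sH\}$ and, by \eqref{INntersec}, $(\sH^o_\tau)^\perp=\bigcap_{n\ge0}\ker(D_AA^n)=\bigcap_{n\ge1}\ker D_{A^n}$. By \eqref{perp} this subspace is $A$-invariant and carries the maximal unilateral shift contained in $A$, so $\Omega=(\sH^o_\tau)^\perp\ominus A(\sH^o_\tau)^\perp$ is its generating wandering subspace and $(\sH^o_\tau)^\perp=\bigoplus_{n\ge0}A^n\Omega$. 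Thus $\varphi(\lambda):=P_\Omega(I-\lambda A)^{-1}B\in\bS(\sM,\Omega)$, with Taylor coefficients $P_\Omega A^nB$.

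The analytic core -- and the step I expect to be the main obstacle -- is to verify that this $\varphi$ is precisely the maximal outer minorant of $\Delta(\xi)=(I-\Theta^*(\xi)\Theta(\xi))^{1/2}$ in the sense of Theorem~\ref{Fact}, i.e.\ that $\varphi$ is outer, that $\varphi^*\varphi\le I-\Theta^*\Theta$ a.e.\ on $\dT$, and that it dominates every competitor. The natural route is to pass to the minimal unitary dilation of $A$ (equivalently, to the Sz.-Nagy--Foias model, whose residual component is built from $\Delta L^2(\sM)$): the shift $A\uphar(\sH^o_\tau)^\perp$ and its wandering subspace $\Omega$ generate, through the associated outgoing spectral representation, an outer function, which one then matches with $\varphi$ coefficientwise using the kernel identity of the second step. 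The delicate points are the outerness of $\varphi$ (reflecting the maximality of the shift $A\uphar(\sH^o_\tau)^\perp$) and the extremal comparison against an arbitrary admissible $\wt\varphi$, where the Szeg\H{o}-type factorization machinery underlying Theorem~\ref{Fact} must be used; this is where the real work lies, and it is exactly the content isolated in \cite[Theorem~1.1]{DR}. Granting part~(1), part~(2) is short: if $\tau$ is observable then $(\sH^o_\tau)^\perp=\{0\}$, so $\Omega=\{0\}$ and $\varphi_\Theta=0$; conversely $\varphi_\Theta\equiv0$ forces $P_\Omega A^nB=0$ for all $n$, i.e.\ $\Omega\perp\sH^c_\tau$, while $\Omega\perp\sH^o_\tau$ by construction, so $\Omega=\{0\}$ by simplicity $\sH=\clos\{\sH^c_\tau+\sH^o_\tau\}$; since $\Omega$ is the wandering subspace of the shift on $(\sH^o_\tau)^\perp$, this gives $(\sH^o_\tau)^\perp=\{0\}$, i.e.\ observability.
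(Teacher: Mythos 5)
You should first note a structural point: the paper does not prove Theorem \ref{DBR} at all --- it is quoted verbatim as background, with the proof delegated to \cite[Theorem 1.1, Corollary 1]{DR} (see also \cite[Theorem 3]{BDFK1}, \cite[Theorem 1.5]{BDFK2}). So there is no internal argument to compare yours against, and your proposal has to stand on its own as a proof.

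Judged that way, much of it is sound: the unitarity relations $C^*C=D_A^2$, $D^*C=-B^*A$, $D^*D=I-B^*B$ are correct, and with \eqref{INntersec} they give $(\sH^o_\tau)^\perp=\bigcap_{n\ge1}\ker D_{A^n}$; since simplicity of $\tau$ makes $A$ completely non-unitary, the isometry $A\uphar(\sH^o_\tau)^\perp$ is a pure unilateral shift, so the Wold decomposition $(\sH^o_\tau)^\perp=\bigoplus_{n\ge0}A^n\Omega$ holds as you claim; the de Branges--Rovnyak kernel identity is the standard one for unitary colligations; the passage to $\psi_\Theta$ via the adjoint system $\tau^*$ and $\wt\Theta$ is legitimate; and, granting part (1), your proof of part (2) in both directions is complete and correct. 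The genuine gap is exactly where you flag it, and flagging it does not fill it: the identification of $\varphi(\lambda)=P_\Omega(I-\lambda A)^{-1}B$ with the defect function requires showing (a) $\varphi^*(\xi)\varphi(\xi)\le I-\Theta^*(\xi)\Theta(\xi)$ a.e.\ on $\dT$, (b) $\varphi$ is outer, and (c) $\varphi$ dominates every admissible $\wt\varphi$ in the sense of Theorem \ref{Fact}; none of these is actually derived. Your kernel identity gives $I-\Theta(\lambda)^*\Theta(\lambda)=(1-|\lambda|^2)F(\lambda)^*F(\lambda)$ with $F(\lambda)=(I-\lambda A)^{-1}B$, whereas $\varphi(\lambda)^*\varphi(\lambda)=F(\lambda)^*P_\Omega F(\lambda)$ carries no $(1-|\lambda|^2)$ factor, so even step (a) is not a direct consequence: one must pass to the spectral (Fourier) representation of the shift $A\uphar(\sH^o_\tau)^\perp$ --- using that this subspace is co-invariant for $A^*$, so compressions of powers are powers of the compression --- and take boundary limits; steps (b) and (c) need the Szeg\H{o}-factorization machinery in earnest. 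Deferring all of this to \cite[Theorem 1.1]{DR} is circular here, since that is precisely the theorem being restated. In short: correct architecture, correct soft steps, but the analytic core of part (1) is asserted rather than proved.
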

The defect functions play an essential role in the problems of the
system theory, in particular, in the problem of similarity and
unitary similarity of the minimal passive systems with equal
transfer functions \cite{ArNu1}, \cite{ArNu2} and in the problem of
\textit{optimal} and $(*)$ \textit{optimal} realizations of the
Schur function \cite{Arov}, \cite{ArKaaP}.
\section{Conservative realization of the Schur
algorithm} \label{CRSCAL}
\begin{theorem}
\label{Schuriso} \cite{OAM2009}. 1) Let the system
\[
\tau=\left\{\begin{bmatrix}D&D_{D^*}G\cr FD_D&-FD^*G+D_{F^*}LD_G
\end{bmatrix};\sM,\sN,\sH\right\}
\]
be conservative and simple and let $\Theta$ be its transfer
function. Suppose that the first associated function $\Theta_1$
is non-unitary constant. Then the systems
\begin{equation}
\label{newzeta}
\begin{array}{l}
\zeta_1=\left\{\begin{bmatrix} GF& G\cr LD_GF&
LD_G\end{bmatrix};\sD_D,\sD_{D^*},\sD_{F^*}\right\},\\
\zeta_2=\left\{\begin{bmatrix} GF& GL\cr D_GF&
D_GL\end{bmatrix};\sD_D,\sD_{D^*},\sD_{G}\right\}
\end{array}
\end{equation}
are conservative and simple and their transfer functions are equal
to $\Theta_1$.

2) Let $\Theta\in{\bf S}(\sM,\sN)$, $\Gamma_0=\Theta(0)$ and let
$\Theta_1$ be the first associated function. Suppose
\[
\tau=\left\{\begin{bmatrix}\Gamma_0&C\cr
B&A\end{bmatrix};\sM,\sN,\sH\right\}
\]
is a simple conservative system with transfer function $\Theta$.
Then the simple conservative systems
\begin{equation}\label{RealFirst}
\begin{array}{l}
\zeta_{1}=\left\{\begin{bmatrix}D^{-1}_{\Gamma^*_0}C(D^{-1}_{\Gamma_0}B^*)^*&D^{-1}_{\Gamma^*_0}C\uphar\ker
D_{A^*}\cr AP_{\ker D_A}D^{-1}_{A^*}B&P_{\ker D_{A^*}}A\uphar\ker
D_{A^*}\end{bmatrix};\sD_{\Gamma_0},\sD_{\Gamma^*_0},\ker
D_{A^*}\right\},\\
\zeta_{2}=\left\{\begin{bmatrix}D^{-1}_{\Gamma^*_0}C(D^{-1}_{\Gamma_0}B^*)^*&
D^{-1}_{\Gamma^*_0}CA\uphar\ker {D_A}\cr P_{\ker
D_A}D^{-1}_{A^*}B&P_{\ker D_A}A\uphar\ker
D_A\end{bmatrix};\sD_{\Gamma_0},\sD_{\Gamma^*_0},\ker D_A\right\}
 \end{array}
\end{equation}
have transfer functions $\Theta_1$. Here the operators
$D^{-1}_{\Gamma_0},$ $D^{-1}_{\Gamma^*_0},$ and $D^{-1}_{A^*}$ are
the Moore--Penrose pseudo-inverses.
\end{theorem}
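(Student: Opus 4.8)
The plan is to prove assertion 1) by a direct computation in the parametrization \eqref{BLOCKS} and then to deduce 2) from it, after observing that the two statements describe the \emph{same} pair of systems in different coordinates. Writing $T_\tau^*T_\tau=T_\tau T_\tau^*=I$ for the unitary colligation gives $C^*C=I-A^*A=D_A^2$ and $BB^*=I-AA^*=D_{A^*}^2$, so that $\ker D_A=\ker C=\sD_G$ and $\ker D_{A^*}=\ker B^*=\sD_{F^*}$; this already matches the state spaces of 1) and 2). I would then record the rigidity forced by unitarity: applying the isometry and co-isometry criteria of Corollary \ref{iscois1} to $T_\tau$ yields $D_FD_D=D_{G^*}D_{D^*}=D_LD_G=D_{L^*}D_{F^*}=0$, hence $F$ is isometric, $G$ is co-isometric, and $L\colon\sD_G\to\sD_{F^*}$ is unitary. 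Consequently $D_G=P_{\sD_G}$, $D_{F^*}=P_{\sD_{F^*}}$, $\sD_{F^*}=(\ran F)^\perp=\ker F^*$, $GD_G=D_{G^*}G=0$, and --- the identity that drives everything --- $D_{F^*}LD_G=LD_G$, since $\ran(LD_G)\subseteq\sD_{F^*}$, where $D_{F^*}$ acts as the identity.

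For conservativity I would exploit the factorizations
\[
T_{\zeta_1}=\begin{bmatrix}G\\ LD_G\end{bmatrix}\begin{bmatrix}F & \iota\end{bmatrix},\qquad
T_{\zeta_2}=\begin{bmatrix}G\\ D_G\end{bmatrix}\begin{bmatrix}F & L\end{bmatrix},
\]
with $\iota\colon\sD_{F^*}\hookrightarrow\sH$ the inclusion. In each case the left factor is isometric since $G^*G+D_G^2=I$ (using $L^*L=I$ for $\zeta_1$) and co-isometric since $GG^*=I$ on $\sD_{\Gamma^*_0}$, $GD_G=0$, and the lower-right corner is $D_G^2=P_{\sD_G}$ for $\zeta_2$, resp. $LD_G^2L^*=LL^*=P_{\sD_{F^*}}$ for $\zeta_1$; the right factor is isometric since $F^*F=L^*L=I$ and $F^*\iota=F^*L=0$ (because $\ran\iota,\ran L\subseteq(\ran F)^\perp$), and co-isometric since $FF^*+\iota\iota^*=FF^*+LL^*=I$. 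As products of unitaries, $T_{\zeta_1}$ and $T_{\zeta_2}$ are unitary, so $\zeta_1$ and $\zeta_2$ are conservative.

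The crux is the transfer-function identity. A Neumann-series computation, together with the push-through identity $L(I-\lambda D_GL)^{-1}=(I-\lambda LD_G)^{-1}L$, gives $\Theta_{\zeta_1}(\lambda)=\Theta_{\zeta_2}(\lambda)=G(I-\lambda LD_G)^{-1}F$. Put $W(\lambda)=G(I-\lambda A)^{-1}F$, so the realization reads $\Theta=\Gamma_0+\lambda D_{\Gamma^*_0}WD_{\Gamma_0}$. Writing $A=LD_G-F\Gamma^*_0 G$ (via $D_{F^*}LD_G=LD_G$) and applying the resolvent identity to the perturbation $-F\Gamma^*_0 G$ of $LD_G$, I would obtain
\[
G(I-\lambda LD_G)^{-1}F=W(\lambda)\bigl(I-\lambda\Gamma^*_0 W(\lambda)\bigr)^{-1}.
\]
On the other hand, comparing $\Theta=\Gamma_0+\lambda D_{\Gamma^*_0}WD_{\Gamma_0}$ with the M\"obius representation \eqref{MREP} identifies the M\"obius parameter through $\lambda W=Z_0(I+\Gamma^*_0 Z_0)^{-1}$, whence $Z_0=(I-\lambda W\Gamma^*_0)^{-1}\lambda W$ and, again by push-through, $\Theta_1=\lambda^{-1}Z_0=(I-\lambda W\Gamma^*_0)^{-1}W=W(I-\lambda\Gamma^*_0 W)^{-1}$. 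The two displays coincide, so $\Theta_{\zeta_1}=\Theta_{\zeta_2}=\Theta_1$. I expect this step --- correctly matching the colligation resolvent to the M\"obius parameter while juggling the several push-through identities --- to be the main technical obstacle; note that it is uniform in $\lambda$ and requires no special hypothesis on $\Theta_1$.

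For simplicity and for passing to 2) I would invoke the tower $\{A_{n,m}\}$ of Section \ref{secS}. Since for $h\in\sD_{F^*}$ the term $F\Gamma^*_0 Gh$ is killed by $P_{\sD_{F^*}}$ (as $\ran F\perp\sD_{F^*}$), one has $P_{\sD_{F^*}}A\uphar\sD_{F^*}=LD_G\uphar\sD_{F^*}$, i.e. the state operator of $\zeta_1$ is exactly $A_{0,1}$ (and that of $\zeta_2$ is $A_{1,0}$). By \eqref{AKNM},
\[
\bigcap_{k\ge1}\ker D_{A_{0,1}^k}=\Bigl(\bigcap_{k}\ker D_{A^k}\Bigr)\cap\ker D_{A^*},\qquad
\bigcap_{k\ge1}\ker D_{A_{0,1}^{*k}}=\bigcap_{k\ge1}\ker D_{A^{*k}},
\]
so the intersection of these two subspaces equals $\bigl(\bigcap_k\ker D_{A^k}\bigr)\cap\bigl(\bigcap_k\ker D_{A^{*k}}\bigr)=\{0\}$ by \eqref{cu}, since $\tau$ is simple; the same computation handles $A_{1,0}$. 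Thus $\zeta_1,\zeta_2$ are simple. Finally, to identify the colligations of 2) with those of 1), I would substitute the dictionary of Proposition \ref{uncion}: from \eqref{twee} one has $D_{A^*}^{-1}B=M$ and $F=M\uphar\sD_D$, giving $AP_{\ker D_A}D_{A^*}^{-1}B=LD_GF$ and $P_{\ker D_A}D_{A^*}^{-1}B=D_GF$; from $C=D_{\Gamma^*_0}G$ and $A\uphar\sD_G=L$ one gets $D_{\Gamma^*_0}^{-1}C\uphar\ker D_{A^*}=G\uphar\sD_{F^*}$, $D_{\Gamma^*_0}^{-1}CA\uphar\ker D_A=GL$, and $D_{\Gamma^*_0}^{-1}C(D_{\Gamma_0}^{-1}B^*)^*=GF$. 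Hence the systems of 2) are precisely $\zeta_1,\zeta_2$ of 1), and the conclusions transfer. These last verifications are routine once $D_G=P_{\sD_G}$ and $A\uphar\sD_G=L$ are in hand.
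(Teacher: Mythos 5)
Your proposal is correct. Keep in mind that this paper never proves Theorem \ref{Schuriso} --- it is imported from \cite{OAM2009} with only a citation --- so there is no internal proof to compare against; what you have written is a self-contained derivation from the background the paper does supply, and every step checks out. In particular: applying Corollary \ref{iscois1} to the unitary colligation does force $F$ isometric, $G$ co-isometric and $L$ unitary, whence $D_G=P_{\ker G}$, $D_{F^*}=P_{\ker F^*}$ and $D_{F^*}LD_G=LD_G$; the column relations $D_A^2=C^*C$, $D_{A^*}^2=BB^*$ give $\ker D_A=\sD_G$ and $\ker D_{A^*}=\sD_{F^*}$ (to pass from $\ker C$ to $\ker G$ you should state explicitly that $D_{D^*}$ is injective on $\sD_{D^*}$, and dually for $D_D$); your two factorizations into products of unitaries establish conservativity; and the resolvent/perturbation computation yielding $\Theta_{\zeta_1}=\Theta_{\zeta_2}=G(I-\lambda LD_G)^{-1}F=W\bigl(I-\lambda\Gamma_0^*W\bigr)^{-1}$ with $W=G(I-\lambda A)^{-1}F$ is verified exactly by the push-through identities you invoke.

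Two compressed steps deserve one extra line each. First, the identification $\lambda W=Z_0(I+\Gamma_0^*Z_0)^{-1}$ tacitly uses the \emph{uniqueness} of the M\"obius parameter in \eqref{MREP}, which follows from injectivity of $D_{\Gamma_0^*}$ on $\sD_{\Gamma_0^*}$ and density of $\ran D_{\Gamma_0}$ in $\sD_{\Gamma_0}$; since \eqref{MREP} is stated only as an existence result, say so. Second, your simplicity argument --- recognizing the state operators of $\zeta_1,\zeta_2$ as $A_{0,1}$ and $A_{1,0}$ and combining \eqref{AKNM} with \eqref{cu} --- is exactly the mechanism the material of Section \ref{secS} is set up to deliver, and it is also what justifies your (correct) observation that the hypothesis on $\Theta_1$ in part 1) is not needed for the stated conclusions: if $\Theta_1$ were a unitary constant the argument still runs and merely forces the state spaces to be trivial. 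Finally, your reduction of 2) to 1) through the dictionary of Proposition \ref{uncion} ($M=D_{A^*}^{-1}B$, $F=M\uphar\sD_D$, $G=D_{\Gamma_0^*}^{-1}C$, $A\uphar\sD_G=L$) correctly exhibits the systems \eqref{RealFirst} as the systems \eqref{newzeta} written in the coordinates of Theorem \ref{ParContr1}, so the conclusions do transfer as you claim.
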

\begin{theorem}
\label{ITERATES11} \cite{OAM2009}. Let $\Theta\in {\bf S}(\sM,\sN)$
and let
$\tau_0=\left\{\begin{bmatrix}\Gamma_0&C\cr
B&A\end{bmatrix};\sM,\sN,\sH\right\} $
 be a simple conservative
realization of $\Theta$. Then
 for each $n\ge 1$ the unitarily equivalent simple conservative systems
\begin{equation}
\label{taun}
\begin{array}{l}
\tau^{(k)}_{n}=\left\{\begin{bmatrix}\Gamma_n&D^{-1}_{\Gamma^*_{n-1}}
\cdots D^{-1}_{\Gamma^*_{0}}(CA^{n-k})\cr
A^k\left(D^{-1}_{\Gamma_{n-1}}\cdots D^{-1}_{\Gamma_{0}}
\left(B^*\uphar\sH_{n,0}\right)\right)^*&A_{n-k,k}\end{bmatrix};
\sD_{\Gamma_{n-1}},\sD_{\Gamma^*_{n-1}}, \sH_{n-k,k}\right\},\\
k=0,1,\ldots,n \end{array}
\end{equation}
are realizations of the $n$-th associated function $\Theta_n$ of the
function $\Theta$.
 Here the operator
\[
B_n=\left(D^{-1}_{\Gamma_{n-1}}\cdots D^{-1}_{\Gamma_{0}}
\left(B^*\uphar\sH_{n,0}\right)\right)^*\in\bL(\sD_{\Gamma_{n-1}},\sH_{n,0})
\]
is the adjoint to the operator
\[
D^{-1}_{\Gamma_{n-1}}\cdots D^{-1}_{\Gamma_{0}}
\left(B^*\uphar\sH_{n,0}\right)\in\bL(\sH_{n,0},\sD_{\Gamma_{n-1}}).
\]
\end{theorem}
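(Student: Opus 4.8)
The plan is to argue by induction on $n$, using the single-step realization result Theorem \ref{Schuriso}(2) as the engine and the combinatorics of the subspaces $\sH_{n,m}$ recorded in \eqref{AKNM}--\eqref{UE1}. Throughout I abbreviate $C_n:=D^{-1}_{\Gamma^*_{n-1}}\cdots D^{-1}_{\Gamma^*_0}C$, so that the coupling entries of $\tau^{(k)}_n$ in \eqref{taun} read $C_nA^{n-k}\uphar\sH_{n-k,k}$ and $A^kB_n$. For the \textbf{base case} $n=1$ the two systems $\tau^{(0)}_1,\tau^{(1)}_1$ are precisely the systems $\zeta_2,\zeta_1$ of \eqref{RealFirst}: indeed $\sH_{1,0}=\ker D_A$, $\sH_{0,1}=\ker D_{A^*}$, the diagonal state operators $A_{1,0}$ and $A_{0,1}$ already coincide with those in \eqref{RealFirst}, and the coupling entries agree once one checks the elementary identity
\[
P_{\ker D_A}D^{-1}_{A^*}B=\bigl(D^{-1}_{\Gamma_0}(B^*\uphar\ker D_A)\bigr)^*
\]
(and its adjoint/$A$-shifted analogue for the output entry), which follows from the parametrization \eqref{twee} together with the unitarity relations \eqref{connect1} applied to $T_{\tau_0}$, using that $D^{-1}_{A^*}D_{A^*}=P_{\sD_{A^*}}$. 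Hence by Theorem \ref{Schuriso}(2), $\tau^{(0)}_1,\tau^{(1)}_1$ are simple conservative realizations of $\Theta_1$.

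For the \textbf{inductive step}, assume every $\tau^{(k)}_n$, $0\le k\le n$, is a simple conservative realization of $\Theta_n$; in particular $\tau^{(0)}_n$ is, with state operator $A_{n,0}$ and coupling operators $C_nA^{n}\uphar\sH_{n,0}$ and $B_n$. Since $\Theta_{n+1}$ is the first associated function of $\Theta_n$ and $\Gamma_n=\Theta_n(0)$, I apply Theorem \ref{Schuriso}(2) to $\tau^{(0)}_n$. This yields a simple conservative realization $\zeta_2$ of $\Theta_{n+1}$ whose state space is $\ker D_{A_{n,0}}=\sH_{n+1,0}$ by \eqref{AKNM}, whose state operator is $(A_{n,0})_{1,0}=A_{n+1,0}$ by \eqref{UE1}, and whose $(1,1)$-entry is $\Theta_{n+1}(0)=\Gamma_{n+1}$. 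I then claim $\zeta_2=\tau^{(0)}_{n+1}$. For the output entry, since $A_{n,0}$ agrees with $A$ on $\sH_{n+1,0}=\ker D_{A_{n,0}}$, the $(1,2)$-entry $D^{-1}_{\Gamma^*_n}(C_nA^{n})A_{n,0}\uphar\ker D_{A_{n,0}}$ collapses to $C_{n+1}A^{n+1}\uphar\sH_{n+1,0}$, matching \eqref{taun}. For the input entry, the base-case identity applied to the realization $\tau^{(0)}_n$ of $\Theta_n$ gives
\[
P_{\ker D_{A_{n,0}}}D^{-1}_{A^*_{n,0}}B_n=\bigl(D^{-1}_{\Gamma_n}(B_n^{*}\uphar\ker D_{A_{n,0}})\bigr)^*,
\]
and because $B_n^{*}=D^{-1}_{\Gamma_{n-1}}\cdots D^{-1}_{\Gamma_0}(B^*\uphar\sH_{n,0})$ while $\ker D_{A_{n,0}}=\sH_{n+1,0}\subset\sH_{n,0}$, the right-hand side telescopes to $\bigl(D^{-1}_{\Gamma_n}\cdots D^{-1}_{\Gamma_0}(B^*\uphar\sH_{n+1,0})\bigr)^*=B_{n+1}$, again matching \eqref{taun}. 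Thus $\tau^{(0)}_{n+1}$ is a simple conservative realization of $\Theta_{n+1}$.

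It remains to \textbf{pass to general $k$}. Here I use that $V_k:=A^{k}\uphar\sH_{n+1,0}$ is unitary from $\sH_{n+1,0}$ onto $\sH_{n+1-k,k}$, since each $A\uphar\sH_{j,m}$ is unitary onto $\sH_{j-1,m+1}$ and the operators $A_{n+1,0},\dots,A_{0,n+1}$ are unitarily equivalent through these maps. A direct check against \eqref{taun} shows $V_k$ implements the unitary similarity of $\tau^{(0)}_{n+1}$ and $\tau^{(k)}_{n+1}$: the state operators are intertwined by that unitary equivalence, the input operators satisfy $A^{k}B_{n+1}=V_kB_{n+1}$, the output operators satisfy $C_{n+1}A^{n+1}=\bigl(C_{n+1}A^{n+1-k}\bigr)V_k$, and the $(1,1)$-entry $\Gamma_{n+1}$ is unchanged. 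Consequently every $\tau^{(k)}_{n+1}$ is unitarily similar to the simple conservative realization $\tau^{(0)}_{n+1}$ of $\Theta_{n+1}$, hence is itself such a realization, and all of them are unitarily equivalent. This closes the induction; the unitary equivalence across $k$ may alternatively be read off from the general uniqueness of simple conservative realizations of a fixed transfer function.

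The \textbf{main obstacle} is the $(2,1)$-entry computation. The index bookkeeping via \eqref{AKNM}--\eqref{UE1} and the conjugation by $V_k$ are routine, but one must prove the base-case identity $P_{\ker D_A}D^{-1}_{A^*}B=(D^{-1}_{\Gamma_0}(B^*\uphar\ker D_A))^*$ and verify that, under one application of Theorem \ref{Schuriso}(2) to $\tau^{(0)}_n$, the new factor $D^{-1}_{\Gamma_n}$ attaches correctly to the accumulated product $D^{-1}_{\Gamma_{n-1}}\cdots D^{-1}_{\Gamma_0}$ while the restriction sharpens from $\sH_{n,0}$ to $\sH_{n+1,0}$. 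This demands careful control of the Moore--Penrose inverses of the defect operators (their ranges, and the projection identity $D^{-1}_{A^*}D_{A^*}=P_{\sD_{A^*}}$), which is exactly where the unitarity relations \eqref{connect1} and the parametrization \eqref{twee} have to be invoked.
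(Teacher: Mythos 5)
Your argument is correct and is essentially the intended one: the paper states Theorem \ref{ITERATES11} without proof (it is imported from \cite{OAM2009}), and your induction — one application of Theorem \ref{Schuriso}(2) per step, with the bookkeeping \eqref{AKNM}--\eqref{UE1}, followed by transport under the unitaries $A^{k}\uphar\sH_{n,0}:\sH_{n,0}\to\sH_{n-k,k}$ — is exactly the mechanism the surrounding text encodes (cf.\ the relation $\left(\tau_n^{(k)}\right)_m^{(l)}=\tau_{n+k}^{(k+l)}$ and the triangular diagram). Your pivotal identity $P_{\ker D_A}D^{-1}_{A^*}B=\bigl(D^{-1}_{\Gamma_0}(B^*\uphar\ker D_A)\bigr)^*$ does hold exactly as you indicate: writing $B=D_{A^*}M$, unitarity of the system operator forces $MM^*=I_{\sD_{A^*}}$, and \eqref{connect1} gives $D_{\Gamma_0}=M^*D_{A^*}M$ with $\sD_{\Gamma_0}=\ran M^*$, whence $D_{\Gamma_0}M^*P_{\sD_{A^*}}=M^*D_{A^*}$ and both sides of your identity coincide with $\bigl(M^*P_{\sD_{A^*}}\uphar\ker D_A\bigr)^*$.
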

Notice that the systems
$\tau_n^{(0)},\tau_n^{(1)},\ldots,\tau_n^{(n)} $ are unitarily
similar. In addition
\[
\left(\tau_n^{(k)}\right)_m^{(l)}=\tau_{n+k}^{(k+l)},\;
k=0,1,\ldots,n,\; l=0,\ldots m.
\]
This property can be illustrated by the following picture

 \xymatrix{
&&&\tau_0\ar[ld]\ar[rd]\\
&&\tau_1^{(0)}\ar[ld]\ar[rd]&&\tau_1^{(1)}\ar[ld]\ar[rd]\\
&\tau_2^{(0)}\ar[ld]\ar[rd]&&\tau_2^{(1)}\ar[ld]\ar[rd]&&\tau_2^{(2)}\ar[ld]\ar[rd]\\
\tau_3^{(0)}&&
\tau_3^{(1)}&&\tau_3^{(2)}&&\tau_3^{(3)}\\
\cdots&\cdots&\cdots&\cdots&\cdots&\cdots&\cdots}

\section{Some new properties of the Kre\u{\i}n shorted operators} \label{KRSHOP}
  The next statement is well known.
\begin{proposition} \cite{AT}.
\label{short}Let $\cK$ be a subspace in $\cH$. Then
\begin{enumerate}
\item if $S_1$ and $S_2$ are nonnegative selfadjoint operators then
$$\left(S_1+S_2\right)_\cK\ge \left(S_1\right)_\cK+\left(S_2\right)_\cK;$$
\item
$S_1\ge S_2\ge 0$ $\Rightarrow $ $\left(S_1\right)_\cK\ge
\left(S_2\right)_\cK$;
\item
if $\{S_n\}$ is a nonincreasing sequence of nonnegative bounded
selfadjoint operators and $S=s-\lim\limits_{n\to\infty}S_n$ then
$$s-\lim\limits_{n\to\infty} \left(S_n\right)_\cK=S_\cK.$$
\end{enumerate}
\end{proposition}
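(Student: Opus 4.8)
The plan is to derive parts (1) and (2) directly from the variational formula \eqref{Sh1}, and then to bootstrap part (3) from (2) together with the maximality clause of the original definition of $S_\cK$. For (2), fix $f\in\cH$: since $S_1\ge S_2\ge 0$, for every $\f\in\cK^\perp$ one has $\left(S_1(f+\f),f+\f\right)\ge\left(S_2(f+\f),f+\f\right)$, and taking the infimum over $\f$ and invoking \eqref{Sh1} gives $\left((S_1)_\cK f,f\right)\ge\left((S_2)_\cK f,f\right)$ for all $f$, i.e.\ $(S_1)_\cK\ge(S_2)_\cK$. For (1), I would again fix $f$, apply \eqref{Sh1} to $S_1+S_2$, and split the quadratic form as $\left(S_1(f+\f),f+\f\right)+\left(S_2(f+\f),f+\f\right)$; the elementary inequality $\inf_\f(g+h)\ge\inf_\f g+\inf_\f h$ then yields the claimed $\left((S_1+S_2)_\cK f,f\right)\ge\left((S_1)_\cK f,f\right)+\left((S_2)_\cK f,f\right)$.

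The substance is in (3). First, $S_n\ge S_{n+1}$ gives, by (2), that $\{(S_n)_\cK\}$ is a nonincreasing sequence of nonnegative bounded selfadjoint operators, so by the monotone convergence theorem for operators it converges strongly to some $T\ge 0$. Since the sequence is nonincreasing with strong limit $S$, we have $S_n\ge S$ for every $n$, whence (2) gives $(S_n)_\cK\ge S_\cK$ and so $T\ge S_\cK$ in the limit. It remains to prove $T\le S_\cK$, for which I would check that $T$ is an admissible competitor in the maximum defining $S_\cK$: passing to the strong limit in $0\le(S_n)_\cK\le S_n$ yields $0\le T\le S$, while each vector $(S_n)_\cK f$ lies in the closed subspace $\cK$, so $Tf=\lim_n(S_n)_\cK f\in\cK$ and thus $\ran T\subseteq\cK$. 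The maximality in the definition of $S_\cK$ then forces $T\le S_\cK$, and together with $T\ge S_\cK$ this gives $T=S_\cK$.

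I expect the only delicate point to be the $T\le S_\cK$ half of (3): the monotonicity argument alone delivers merely $T\ge S_\cK$, and closing the gap genuinely requires the maximality clause of the original (non-variational) definition of the shorted operator, together with the closedness of $\cK$ to ensure that the limiting operator $T$ still has range contained in $\cK$.
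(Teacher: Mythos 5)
Your proof is correct. The paper states Proposition \ref{short} without proof (it is imported from \cite{AT}), so there is no internal argument to compare against; your derivation is exactly the classical one: parts (1) and (2) follow from the variational formula \eqref{Sh1} by superadditivity and monotonicity of the infimum, and in (3) you correctly identify the asymmetry --- monotone operator convergence together with (2) only yields $T\ge S_\cK$, while the reverse inequality requires verifying that the strong limit $T$ is an admissible competitor in Kre\u{\i}n's maximality definition, which you do properly by passing the form inequalities $0\le (S_n)_\cK\le S_n$ to the limit to get $0\le T\le S$ and by using the closedness of $\cK$ to conclude $\ran T\subseteq\cK$ from $Tf=\lim_n (S_n)_\cK f$.
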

Let $\cK^\perp=\cH\ominus \cK$. Then a bounded selfadjoint operator
$S$ has the block-matrix form
\[
S=\begin{pmatrix}S_{11}&S_{12}\cr S^*_{12}&S_{22}
\end{pmatrix}:\begin{array}{l}\cK\\\oplus\\\cK^\perp \end{array}\to
\begin{array}{l}\cK\\\oplus\\\cK^\perp \end{array}.
\]
 It is well known (see \cite{KrO}) that

 \textit{the
operator $S$ is nonnegative if and only if
\begin{equation}
\label{POZ} S_{22}\ge 0,\; \ran S^*_{12}\subset\ran
S^{1/2}_{22},\,\; S_{11}\ge \left(S^{-1/2}_{22}S^*_{12}\right)^*\left(S^{-1/2}_{22}S^*_{12}\right)
\end{equation}
and the operator $S_\cK$ is given by the block matrix
\begin{equation}
\label{shormat1}
S_\cK=\begin{pmatrix}S_{11}-\left(S^{-1/2}_{22}S^*_{12}\right)^*\left(S^{-1/2}_{22}S^*_{12}\right)&0\cr
0&0\end{pmatrix}.
\end{equation}
} If $S^{-1}_{22}\in\bL(\cK^\perp)$ then the right hand side of
\eqref{shormat1} is of the form
\[
\begin{pmatrix}S_{11}-S_{12}S^{-1}_{22}S^*_{12}&0\cr
0&0\end{pmatrix}
\]
and is called the \textit{Schur complement} of the
matrix $S$. From \eqref{shormat1} it follows that
\[
S_\cK=0\iff \ran S^*_{12}\subset\ran
S^{1/2}_{22}\quad\mbox{and}\quad
S_{11}=\left(S^{-1/2}_{22}S^*_{12}\right)^*\left(S^{-1/2}_{22}S^*_{12}\right).
\]
\begin{proposition}
\label{vspom} Let a bounded nonnegative self-adjoint operator $\bS$
be given by
\[
\bS=\begin{bmatrix}S&0\cr0 &I
\end{bmatrix}:\begin{array}{l}\cL\\\oplus\\\cM\end{array}\to \begin{array}{l}\cL\\\oplus\\\cM\end{array}
\]
and let $\cK$ be a subspace of $\cL$. Then
\[
\bS_\cK=S_\cK P_\cL.
\]
\end{proposition}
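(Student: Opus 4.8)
The plan is to work directly from the variational characterization \eqref{Sh1} of the Kre\u\i n shorted operator, applied in the ambient space $\cH:=\cL\oplus\cM$. First I would identify the orthogonal complement of $\cK$ in $\cH$. Since $\cK\subseteq\cL$, one has $\cH\ominus\cK=(\cL\ominus\cK)\oplus\cM$, so every $\f\in\cK^\perp$ splits uniquely as $\f=\psi+\eta$ with $\psi\in\cL\ominus\cK$ and $\eta\in\cM$. Writing an arbitrary $f\in\cH$ as $f=P_\cL f+P_\cM f$ and using the block-diagonal form of $\bS$, one gets
\[
(\bS(f+\f),f+\f)=(S(P_\cL f+\psi),P_\cL f+\psi)+\|P_\cM f+\eta\|^2.
\]

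The key observation is that the two summands depend on disjoint variables, the first only on $\psi$ and the second only on $\eta$, so the infimum over $\f\in\cK^\perp$ factors as the sum of two independent infima. The $\cM$-infimum vanishes (take $\eta=-P_\cM f$), while the $\cL$-infimum is, by the very definition \eqref{Sh1} of the shorted operator of $S$ within $\cL$ relative to $\cK$ — whose orthogonal complement in $\cL$ is precisely $\cL\ominus\cK$ — equal to $(S_\cK P_\cL f,P_\cL f)$. Hence
\[
(\bS_\cK f,f)=(S_\cK P_\cL f,P_\cL f)=(S_\cK P_\cL f,f),\quad f\in\cH,
\]
the last equality because $\ran S_\cK\subseteq\cK\subseteq\cL$, so that $P_\cL$ may be dropped in the first argument.

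To conclude I would note that $S_\cK P_\cL$ is a bounded nonnegative selfadjoint operator on $\cH$ whose range lies in $\cK$: the range inclusion and nonnegativity are inherited from $S_\cK$, and selfadjointness follows from $\ran S_\cK\subseteq\cL$ together with the selfadjointness of $S_\cK$ on $\cL$. Since $\bS_\cK$ and $S_\cK P_\cL$ are both bounded selfadjoint operators with the same quadratic form, polarization gives $\bS_\cK=S_\cK P_\cL$. The only point that requires genuine care is the decomposition of $\cK^\perp$ in the ambient space and the attendant separation of the infimum; once the block-diagonal structure of $\bS$ is exploited, the rest is a routine application of \eqref{Sh1} and polarization. (Alternatively, the identity can be read off from the explicit block formula \eqref{shormat1} applied to $\bS$, but the variational route avoids handling square roots of $S$.)
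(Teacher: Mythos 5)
Your proof is correct, but it takes a genuinely different route from the paper's. The paper's argument is a two-line computation based on Kre\u{\i}n's representation $S_\cK=S^{1/2}P_\Omega S^{1/2}$ with $\Omega=\{f\in\cran S:\,S^{1/2}f\in\cK\}$: since $\bS^{1/2}=S^{1/2}\oplus I$ and $\cK\subseteq\cL$, the condition $\bS^{1/2}f\in\cK$ kills the $\cM$-component of $f$, so the auxiliary subspace $\Omega$ computed for $\bS$ coincides with the one computed for $S$ and lies in $\cL$; the identity $\bS_\cK=\bS^{1/2}P_\Omega\bS^{1/2}=S^{1/2}P_\Omega S^{1/2}P_\cL=S_\cK P_\cL$ then drops out directly as an operator equality, with no polarization needed. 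You instead work from the variational formula \eqref{Sh1}, exploiting the decomposition $\cH\ominus\cK=(\cL\ominus\cK)\oplus\cM$ to split the infimum into two independent ones over $\psi$ and $\eta$ (legitimate, since the pair ranges over the full product set), with the $\cM$-infimum vanishing at $\eta=-P_\cM f$ and the $\cL$-infimum yielding $(S_\cK P_\cL f,P_\cL f)$ by the definition of $S_\cK$ inside $\cL$; you then pass from equality of quadratic forms to equality of operators by polarization, correctly noting that $S_\cK P_\cL$ is bounded, nonnegative and selfadjoint on $\cH$ because $\ran S_\cK\subseteq\cK\subseteq\cL$. Your route is slightly longer but more elementary — it avoids square roots of $S$ and the representation $S_\cK=S^{1/2}P_\Omega S^{1/2}$ entirely, at the cost of the final polarization step — whereas the paper's route buys brevity and a direct operator identity by invoking that representation. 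Both proofs are complete.
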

\begin{proof}
The inclusion $\cK\subset\cL$ yields
$$\Omega:=\left\{f\in\cran\bS:\bS^{1/2}f\in\cK\right\}=\left\{f\in\cran S:S^{1/2}f\in \cK\right\}\subset\cL.$$
It follows that
\[
\bS_\cK=\bS^{1/2}P_\Omega\bS^{1/2}=S^{1/2}P_\Omega
S^{1/2}P_\cL=S_\cK P_\cL.
\]
\end{proof}
\begin{proposition}
\label{new1} Let $S$ be a bounded nonnegative selfadjoint operator
in the Hilbert space $\cH$, $P$ be an orthogonal  projection in
$\cH$, and let $\cK$ be a subspace in $\cH$ such that
$\cK\subseteq\ran(P)$. Then
$$\left(PSP\right)_\cK\ge S_\cK.$$
\end{proposition}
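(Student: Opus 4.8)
The plan is to exploit the maximality characterization of the shorted operator rather than the variational (infimum) formula \eqref{Sh1}. Recall that, by definition, $(PSP)_\cK$ is the largest operator $Z$ satisfying $0\le Z\le PSP$ and $\ran Z\subseteq\cK$. Hence it suffices to verify that $S_\cK$ itself is an admissible competitor for $(PSP)_\cK$, i.e.\ that $0\le S_\cK\le PSP$ and $\ran S_\cK\subseteq\cK$; the desired inequality $S_\cK\le(PSP)_\cK$ will then follow at once from maximality.

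First I would record the two structural facts about $S_\cK$ already available in the excerpt: $\ran S_\cK\subseteq\cK$, which is immediate from $S_\cK$ being a maximum over operators with range in $\cK$, and $\ker S_\cK\supseteq\cK^\perp$. The range condition for admissibility is thus automatic, and the only genuine thing to check is the operator inequality $S_\cK\le PSP$.

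The key step will be the identity $S_\cK=PS_\cK P$. Indeed, since $\cK\subseteq\ran(P)$ we have $\ran S_\cK\subseteq\cK\subseteq\ran(P)$, so $P$ acts as the identity on $\ran S_\cK$ and therefore $PS_\cK=S_\cK$; taking adjoints and using that $P$ and $S_\cK$ are selfadjoint gives $S_\cK P=S_\cK$, whence $PS_\cK P=S_\cK$. (Equivalently, $\cK\subseteq\ran(P)$ forces $\ker P=(\ran P)^\perp\subseteq\cK^\perp\subseteq\ker S_\cK$, which again yields $S_\cK P=S_\cK$.) Granting this identity, I would conjugate the defining inequality $0\le S_\cK\le S$ by the projection $P$: for every $x\in\cH$, $(PS_\cK P\,x,x)=(S_\cK Px,Px)\le(S\,Px,Px)=(PSP\,x,x)$, so $PS_\cK P\le PSP$. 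Combined with $S_\cK=PS_\cK P$ this reads $S_\cK\le PSP$. Thus $S_\cK$ meets both admissibility requirements, and maximality of $(PSP)_\cK$ gives $S_\cK\le(PSP)_\cK$.

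The only delicate point — the main obstacle, such as it is — is the identity $S_\cK=PS_\cK P$, where one must invoke the hypothesis $\cK\subseteq\ran(P)$ in the right direction (it is exactly this inclusion that places $\ran S_\cK$ inside $\ran P$ and $\ker P$ inside $\ker S_\cK$); everything else reduces to order-monotonicity of conjugation by an orthogonal projection and the maximality definition. Proceeding instead through the variational formula \eqref{Sh1} would be possible but more cumbersome, since controlling $P(f+\varphi)$ for $\varphi\in\cK^\perp$ is awkward, whereas the maximality approach keeps the argument entirely algebraic.
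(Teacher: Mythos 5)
Your proof is correct, but it takes a genuinely different route from the paper's. The paper argues variationally through \eqref{Sh1}: for $f\in\cK$ it writes $\left((PSP)_\cK f,f\right)=\inf_{\varphi\in\cK^\perp}\|S^{1/2}P(f+\varphi)\|^2$, uses $Pf=f$ together with the observation $P\cK^\perp\subseteq\cK^\perp$ (which follows from $\cK\subseteq\ran(P)$ by exactly the same duality you use in your kernel computation) to recognize this as an infimum of $\|S^{1/2}(f+\psi)\|^2$ over the \emph{smaller} set $\psi\in\ran(P)\cap\cK^\perp$, hence bounds it below by $(S_\cK f,f)$; the inequality then extends from $\cK$ to all of $\cH$ because both shorted operators annihilate $\cK^\perp$. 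You instead stay entirely with the maximality definition: you check that $S_\cK$ is an admissible competitor for $(PSP)_\cK$, the key identity being $S_\cK=PS_\cK P$, which you derive correctly both from $\ran S_\cK\subseteq\cK\subseteq\ran(P)$ plus selfadjointness and, alternatively, from $\ker P\subseteq\cK^\perp\subseteq\ker S_\cK$; conjugating $0\le S_\cK\le S$ by $P$ then gives $S_\cK=PS_\cK P\le PSP$, and maximality concludes. Both arguments are short and complete, and the same hypothesis $\cK\subseteq\ran(P)$ carries the weight in each, once in the guise $P\cK^\perp\subseteq\cK^\perp$ and once as $PS_\cK P=S_\cK$; your order-theoretic version avoids quadratic-form manipulations altogether, though your closing prediction that the variational route would be ``more cumbersome'' is a little pessimistic --- the paper's computation is about the same length, since the awkwardness of controlling $P(f+\varphi)$ dissolves precisely because $P$ maps $\cK^\perp$ into itself.
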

\begin{proof}
Let $f\in\cK$. Then by \eqref{Sh1} and taking into account that
$P\cK^\perp\subset\cK^\perp$ we get
\[
\begin{array}{l}
\left(\left(PSP\right)_\cK f,f\right)=\inf\limits_{\f\in \cK^\perp}\left\{\left\|S^{1/2}P(f + \varphi)\right\|^2\right\}\\
=\inf\limits_{\f\in \cK^\perp}\left\{\left\|S^{1/2}(f + P\varphi)\right\|^2\right\}
=\inf\limits_{\psi\in \ran(P)\cap \cK^\perp}\left\{\left\|S^{1/2}(f + \psi)\right\|^2\right\}\\
\ge\inf\limits_{\f\in \cK^\perp}\left\{\left\|S^{1/2}(f + \f)\right\|^2\right\}
=\left(S_\cK f,f\right).
\end{array}
\]
Now the equalities
$$\left(PSP\right)_\cK\uphar \cK^\perp=(S)_\cK\uphar\cK^\perp=0,$$
yield that $\left(PSP\right)_\cK\ge S_\cK.$
\end{proof}
\begin{remark}
\label{nerav} Let $S\ge 0$ be given by a block-operator matrix
\[
S=\begin{pmatrix}S_{11}&S_{12}\cr S^*_{12}&S_{22}
\end{pmatrix}:\begin{array}{l}\cK\\\oplus\\\cK^\perp \end{array}\to
\begin{array}{l}\cK\\\oplus\\\cK^\perp \end{array},
\]
and let  $P=P_\cK$. Then $\left(P_\cK SP_\cK\right)_\cK= P_\cK
SP_\cK=\begin{pmatrix}S_{11}&0\cr 0&0
\end{pmatrix}.$
If $S_{12}\ne 0$, then from \eqref {POZ} and \eqref{shormat1} it
follows that $S_\cK\uphar\cK\ne S_{11}$. Therefore, in general
$\left(PSP\right)_\cK\ne S_\cK$.
\end{remark}
\begin{theorem}
\label{new2}
Let $X$ be a nonnegative contraction in the Hilbert space $\cH$. Assume
\begin{enumerate}
\item there is a sequence $\{X_n\}$ of nonnegative contractions strongly converging to $X$,
\item there is a subspace $\cK$ in $\cH$ such that the sequence of operators $\left\{\left(I-X_n\right)_\cK\right\}$
is non-increasing.
\end{enumerate}
Then
\begin{equation}
\label{Newnew}
 s-\lim\limits_{n\to\infty}\left(I-X_n\right)_\cK\le \left(I-X\right)_\cK.
\end{equation}
\end{theorem}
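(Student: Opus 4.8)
The plan is to reduce the whole statement to the variational formula \eqref{Sh1} for the shorted operator and to observe that only a one-sided interchange of $\inf$ and $\lim$ is needed. First I would record that the left-hand side of \eqref{Newnew} is well defined: since each $X_n$ is a nonnegative contraction we have $0\le I-X_n\le I$, so every $(I-X_n)_\cK$ is nonnegative, and by hypothesis (2) the sequence $\{(I-X_n)_\cK\}$ is non-increasing. A non-increasing sequence of nonnegative bounded self-adjoint operators is bounded below by $0$ and hence converges in the strong operator topology; write $Y:=s-\lim\limits_{n\to\infty}(I-X_n)_\cK$. In particular $\left((I-X_n)_\cK f,f\right)\to (Yf,f)$ for every $f\in\cH$.

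Next comes the core estimate, carried out at the level of quadratic forms. I would fix $f\in\cH$ and $\varepsilon>0$ and use \eqref{Sh1} for $S=I-X$ to pick a near-optimal competitor $\varphi_0\in\cK^\perp$ with
\[
\left((I-X)(f+\varphi_0),f+\varphi_0\right)\le \left((I-X)_\cK f,f\right)+\varepsilon.
\]
Because the \emph{same} vector $\varphi_0$ is an admissible (though not necessarily optimal) competitor in the infimum defining $(I-X_n)_\cK$, formula \eqref{Sh1} applied to $S=I-X_n$ gives, for every $n$,
\[
\left((I-X_n)_\cK f,f\right)\le \left((I-X_n)(f+\varphi_0),f+\varphi_0\right).
\]
Letting $n\to\infty$, the left side tends to $(Yf,f)$ by the first paragraph, while the right side tends to $\left((I-X)(f+\varphi_0),f+\varphi_0\right)$ because $X_n\to X$ strongly forces $(X_ng,g)\to(Xg,g)$ for $g=f+\varphi_0$. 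Combining, $(Yf,f)\le \left((I-X)_\cK f,f\right)+\varepsilon$, and letting $\varepsilon\downarrow 0$ yields $(Yf,f)\le \left((I-X)_\cK f,f\right)$ for all $f\in\cH$. Since both operators are bounded and self-adjoint, this quadratic-form inequality is exactly the operator inequality \eqref{Newnew}.

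The step deserving the most care, and the one that explains why only an inequality (not an equality) can be expected, is the interchange of limit and infimum. The device that makes it work is to commit to a single vector $\varphi_0$ that is near-optimal for the \emph{limit} operator $I-X$ and to feed it into every $I-X_n$; since a shorted operator never exceeds the value of its quadratic form at any fixed admissible competitor, this produces the bound in precisely the direction claimed. One cannot reverse it in general: the optimizing $\varphi_n$ for the individual $X_n$ need not converge, and a vector near-optimal for $X$ need not be near-optimal for the $X_n$. The monotonicity hypothesis (2) is used only to guarantee that the left-hand strong limit exists; it does not remove this asymmetry, and no further regularity of the $X_n$ or of $\cK$ is needed, so the argument above is complete.
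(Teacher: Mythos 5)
Your proof is correct, and it takes a genuinely different and more elementary route than the paper's. The paper's own argument goes through the operator identity \eqref{NEWSH} for the shorted operator (quoted from \cite[Theorem 2.2]{ARL1}), combines it with the variational formula \eqref{koren} for the Moore--Penrose pseudo-inverse applied to $B=I-X^{1/2}P_{\cK^\perp}X^{1/2}$, and along the way needs the auxiliary fact that strong convergence $X_n\to X$ forces strong convergence of the square roots $X_n^{1/2}\to X^{1/2}$ (a step the paper merely asserts ``one can prove''). You bypass square roots and pseudo-inverses entirely: the definition \eqref{Sh1} of $S_\cK$ as an infimum of quadratic forms, together with your device of freezing a single near-optimal competitor $\varphi_0$ for the limit operator $I-X$ and feeding it into every $(I-X_n)_\cK$, reduces the whole theorem to the trivial convergence $\left((I-X_n)g,g\right)\to\left((I-X)g,g\right)$ for fixed $g=f+\varphi_0$ (weak convergence of $X_n$ would already suffice at this point). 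Each step checks out: $0\le X_n\le I$ gives $(I-X_n)_\cK\ge 0$, so the non-increasing sequence has a strong limit $Y$ by monotone convergence for self-adjoint operators; the frozen-competitor inequality $\left((I-X_n)_\cK f,f\right)\le\left((I-X_n)(f+\varphi_0),f+\varphi_0\right)$ is immediate from \eqref{Sh1}; and the passage from the quadratic-form inequality $(Yf,f)\le\left((I-X)_\cK f,f\right)$ to the operator inequality \eqref{Newnew} is valid since both operators are bounded and self-adjoint. Your closing remarks are also accurate: hypothesis (2) enters only to secure existence of the strong limit --- your argument in fact yields the upper-semicontinuity estimate $\limsup\limits_{n\to\infty}\left((I-X_n)_\cK f,f\right)\le\left((I-X)_\cK f,f\right)$ without any monotonicity --- and the one-sidedness of the inf/lim interchange correctly explains why only an inequality can hold. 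What the paper's heavier machinery buys is the explicit closed-form expression \eqref{NEWSH}, which is of independent use in the author's related work; for the inequality \eqref{Newnew} itself, your variational argument is shorter and self-contained.
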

\begin{proof}
We will use the equality (see \cite[Theorem 2.2]{ARL1})
\begin{equation}
\label{NEWSH} (I-X)_{\cK}=P_{\cK}-\left((I-X^{1/2}P_{\cK^\perp}
X^{1/2})^{-1/2}X^{1/2}P_{\cK}\right)^* (I-X^{1/2}P_{\cK^\perp}
X^{1/2})^{-1/2}X^{1/2}P_{\cK}.
\end{equation}
for a nonnegative selfadjoint contraction $X$ in
$\cH$.

As is well-known if $B$ is an arbitrary nonnegative selfadjoint
operator, then
\begin{equation}
\label{koren}
 \sup\limits_{g\in\dom
B\setminus\{0\}}\cfrac{|(h,g)|^2}{(Bg,g)}=\left\{\begin{array}{l}||B^{-1/2}h||^2,\;
h\in\ran B^{1/2}\\
+\infty,\; h\notin\ran B^{1/2}\end{array}\right.,
\end{equation}
where $B^{-1/2}$ is the Moore-Penrose pseudo-inverse. Hence,
equality \eqref{NEWSH} for all $X_n$ and each $f,g\in\cH$ yields
\[
\cfrac{|(X^{1/2}_nP_\cK f,g)|^2}{||g||^2-||P_{\cK^\perp} X^{1/2}_n
g||^2}\le ||P_{\cK}f||^2-((I-X_n)_\cK f,f).
\]
Since the sequence of operators
$\left\{\left(I-X_n\right)_\cK\right\}$ is non-increasing, there
exists
\[
W:=s-\lim\limits_{n\to\infty}(I-X_n)_\cK.
\]
Therefore
\[
\cfrac{|(X^{1/2}_nP_\cK
f,g)|^2}{||g||^2-||P_{\cK^\perp} X^{1/2}_n g||^2}\le ||P_{\cK}f||^2-(W f,f).
\]
One can prove that
\[
X=s-\lim\limits_{n\to\infty}X_n\;\Rightarrow \;X^{1/2}=s-\lim\limits_{n\to\infty}X^{1/2}_n
\]
Therefore
\[
\cfrac{|(X^{1/2}P_\cK
f,g)|^2}{||g||^2-||P_{\cK^\perp} X^{1/2} g||^2}\le ||P_{\cK}f||^2-(W f,f).
\]
By virtue \eqref{koren} for $B=I_\cH-X^{1/2}P_{\cK^\perp}X^{1/2}$, we obtain
\[
\left\|(I_\cH-X^{1/2}P_{\cK^\perp} X^{1/2})^{-1/2}X^{1/2}P_{\cK}f\right\|^2\le ||P_{\cK}f||^2-(W f,f).
\]
Now \eqref{NEWSH} yields \eqref{Newnew}.
\end{proof}

\section{Main results} \label{MARE}

\subsection{Shorted operators for defect operators of Toeplitz matrices}
Let $\Theta\in\bS(\sM,\sN)$ and let $
\Theta(\lambda)=\sum\limits_{n=0}^{\infty}\lambda^n C_n $. Recall
that by definition $\wt\Theta(\lambda):=\Theta^*(\bar \lambda),$
$|\lambda|<1.$  We identify $\sM$ ($\sN$, respectively) with the
subspace
\[
\sM\oplus\underbrace{\{0\}\oplus\{0\}\oplus\cdots\oplus\{0\}}_n\quad\left(\sN\oplus\underbrace{\{0\}\oplus\{0\}\oplus\cdots\oplus\{0\}}_n\right)
\]
in $\sM^{n+1}$ ($\sN^{n+1}$), and with
$\sM\oplus\bigoplus\limits_{k=1}^\infty\{0\}\quad\left(\sN\oplus\bigoplus\limits_{k=1}^\infty\{0\}\right)$
in $l_2(\sM)$ ($l_2(\sN)).$
\begin{theorem}
\label{Main44} Let $\Theta\in\bS(\sM,\sN)$ and let
$\{\Gamma_0,\Gamma_1,\cdots\}$ be the Schur parameters of $\Theta$.
Then for each $n$ the relations
\begin{equation}
\label{Vazhno1}
\left(D^2_{T_{\Theta,n}}\right)_{\sM}=D_{\Gamma_0}D_{\Gamma_1}\cdots
D_{\Gamma_{n-1}} D^2_{\Gamma_n} D_{\Gamma_{n-1}}\cdots
D_{\Gamma_1}D_{\Gamma_0}P_\sM
\end{equation}
\begin{equation}
\label{Vazhno2}
\left(D^2_{T_{\wt\Theta,n}}\right)_{\sN}=D_{\Gamma^*_0}D_{\Gamma^*_1}\cdots
D_{\Gamma^*_{n-1}} D^2_{\Gamma^*_n} D_{\Gamma^*_{n-1}}\cdots
D_{\Gamma^*_1}D_{\Gamma^*_0}P_\sN,\\
\end{equation}
hold.
\end{theorem}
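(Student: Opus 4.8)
The plan is to argue by induction on $n$, simultaneously over all functions in $\bS(\sM,\sN)$, and to deduce \eqref{Vazhno2} from \eqref{Vazhno1} at the very end. The base case $n=0$ is immediate: $T_{\Theta,0}=\Gamma_0=C_0$ acts on $\sM^{1}=\sM$, so shorting onto the whole space does nothing, $(D^2_{T_{\Theta,0}})_\sM=D^2_{\Gamma_0}$, which is the right-hand side of \eqref{Vazhno1} for $n=0$ (all outer defect factors being empty).

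For the inductive step I would first peel off the first input and output coordinate. Splitting $\sM^{n+1}=\sM\oplus\sM^{n}$ and $\sN^{n+1}=\sN\oplus\sN^{n}$ along the first copies, the Toeplitz structure \eqref{toepn} gives the block form $T_{\Theta,n}=\begin{bmatrix}\Gamma_0&0\cr B&A\end{bmatrix}$ with $A:=T_{\Theta,n-1}$ a contraction and $B$ the column block with entries $C_1,\dots,C_n$; the $(1,2)$-block vanishes. Applying Theorem \ref{ParContr1}(ii) with this $A$, the vanishing $(1,2)$-block $C=KD_A$ together with the fact that $K$ is defined on $\sD_A=\cran D_A$ forces, by uniqueness, $K=0$; hence $D_{K^*}=I$ and the $(1,1)$-block reads $\Gamma_0=XD_M$. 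Corollary \ref{iscois1}(1) then yields
\[
(D^2_{T_{\Theta,n}})_\sM=D_MD^2_XD_MP_\sM=\bigl(D^2_M-(XD_M)^*(XD_M)\bigr)P_\sM=\bigl(D^2_{\Gamma_0}-M^*M\bigr)P_\sM ,
\]
so the whole theorem is reduced to computing the single operator $M^*M\in\bL(\sM)$, where $M\in\bL(\sM,\sD_{A^*})$ is the unique solution of $D_{A^*}M=B$. (A direct check confirms this for $n=1$, where $B=C_1=D_{\Gamma^*_0}\Gamma_1D_{\Gamma_0}$ by \eqref{ConstForm}, giving $M=\Gamma_1D_{\Gamma_0}$ and $D^2_{\Gamma_0}-M^*M=D_{\Gamma_0}D^2_{\Gamma_1}D_{\Gamma_0}$.)

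The heart of the proof is the recursion
\[
(D^2_{T_{\Theta,n}})_\sM=D_{\Gamma_0}\,\bigl(D^2_{T_{\Theta_1,n-1}}\bigr)_{\sD_{\Gamma_0}}\,D_{\Gamma_0}\,P_\sM ,
\]
where $\Theta_1\in\bS(\sD_{\Gamma_0},\sD_{\Gamma^*_0})$ is the first associated function of $\Theta$, with Schur parameters $\Gamma_1,\Gamma_2,\dots$. By the reduction above applied to $\Theta_1$ this is equivalent to the Gramian identity $M^*M=D_{\Gamma_0}\bigl(\Gamma^*_1\Gamma_1+M_1^*M_1\bigr)D_{\Gamma_0}$, where $M_1$ is to $T_{\Theta_1,n-1}$ what $M$ is to $T_{\Theta,n}$. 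Establishing this identity is the \emph{main obstacle}: it amounts to recognizing that the normalized input-to-future-state map $M$ splits, after the change of variables implemented by $D_{\Gamma_0}$, into the new parameter $\Gamma_1$ on top and the corresponding map $M_1$ of $T_{\Theta_1,n-1}$ below. This is exactly the content of the operator Schur algorithm and of Constantinescu's successive construction of the $\Gamma_k$ from the truncated matrices $T_0,T_1,\dots$ (cf. \eqref{ConstForm} and \cite{BC, Const}); alternatively it can be read off from the simple conservative realization of $\Theta_1$ provided by Theorems \ref{Schuriso} and \ref{ITERATES11}, whose state operator $A_{1,0}$ and input operator realize $T_{\Theta_1,n-1}$. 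Granting the recursion, the induction closes at once: by the inductive hypothesis for $\Theta_1$ one has $\bigl(D^2_{T_{\Theta_1,n-1}}\bigr)_{\sD_{\Gamma_0}}=D_{\Gamma_1}\cdots D_{\Gamma_{n-1}}D^2_{\Gamma_n}D_{\Gamma_{n-1}}\cdots D_{\Gamma_1}P_{\sD_{\Gamma_0}}$, and conjugating by $D_{\Gamma_0}$ (using $P_{\sD_{\Gamma_0}}D_{\Gamma_0}=D_{\Gamma_0}$) produces precisely the right-hand side of \eqref{Vazhno1}. Equivalently, iterating the Gramian identity and telescoping with $E_k:=D_{\Gamma_0}D_{\Gamma_1}\cdots D_{\Gamma_k}$,
\[
M^*M=\sum_{k=1}^{n}E_{k-1}\Gamma^*_k\Gamma_kE^*_{k-1}=\sum_{k=1}^{n}\bigl(E_{k-1}E^*_{k-1}-E_kE^*_k\bigr)=D^2_{\Gamma_0}-E_nE^*_n ,
\]
so that $D^2_{\Gamma_0}-M^*M=E_nE^*_n=D_{\Gamma_0}\cdots D_{\Gamma_{n-1}}D^2_{\Gamma_n}D_{\Gamma_{n-1}}\cdots D_{\Gamma_0}$, as asserted.

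Finally, \eqref{Vazhno2} needs no separate argument. Since $\wt\Theta(\lambda)=\Theta^*(\bar\lambda)=\sum_{n\ge0}\lambda^nC^*_n$ has lower triangular Toeplitz matrices $T_{\wt\Theta,n}$ and Schur parameters $\{\Gamma^*_n\}$, while $D_{(\Gamma^*_k)}=D_{\Gamma^*_k}$ and $\sD_{\wt\Theta(0)}=\sD_{\Gamma^*_0}$, applying \eqref{Vazhno1} to $\wt\Theta$ in place of $\Theta$ gives \eqref{Vazhno2} verbatim.
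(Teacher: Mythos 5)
Your reduction is correct and is genuinely different in shape from the paper's argument: peeling off the \emph{first} input/output coordinate, so that $T_{\Theta,n}=\begin{bmatrix}\Gamma_0&0\cr B&T_{\Theta,n-1}\end{bmatrix}$, and applying Theorem \ref{ParContr1}(ii) with Corollary \ref{iscois1}(1) (with $K=0$ forced by the vanishing $(1,2)$-block) cleanly gives $\left(D^2_{T_{\Theta,n}}\right)_\sM=\left(D^2_{\Gamma_0}-M^*M\right)P_\sM$, avoiding the flip operator $J_n$ that the paper introduces; the $n=1$ check, the telescoping with $E_k$, and the deduction of \eqref{Vazhno2} from \eqref{Vazhno1} via $\wt\Theta$ are all fine. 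But the proof has a genuine gap exactly where you flag it: the Gramian identity $M^*M=D_{\Gamma_0}\left(\Gamma_1^*\Gamma_1+M_1^*M_1\right)D_{\Gamma_0}$ (equivalently the recursion $\left(D^2_{T_{\Theta,n}}\right)_\sM=D_{\Gamma_0}\left(D^2_{T_{\Theta_1,n-1}}\right)_{\sD_{\Gamma_0}}D_{\Gamma_0}P_\sM$) is asserted, not proved. None of the results you cite states it: \eqref{ConstForm} is only schematic (``${formula}_n$ is a some expression''), and Theorems \ref{Schuriso} and \ref{ITERATES11} concern conservative realizations, not the column operator $M=D^{-1}_{T^*_{\Theta,n-1}}B$ of the truncated Toeplitz matrix; extracting your splitting of $M$ from them would require an additional argument of the same depth as the identity itself (compare the work needed for $B_1D_{\Gamma_0}=P_{1,0}B$ in the paper's proof of Theorem \ref{Main1}, which is the system-theoretic analogue of your step). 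Since the recursion \emph{is} the inductive step, deferring it to ``the content of the operator Schur algorithm'' is close to circular.

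It is instructive to see how the paper fills precisely this hole. It works with the flipped matrix $S_{\Theta,n}=T_{\Theta,n}J_n$, decomposed so that the big Toeplitz block $Q_{n-1}$ sits in the ``$D$''-corner and the single entry $C_n$ in the ``$A$''-corner; then the parameters $F_{n-1},G_{n-1},L_{n-1}$ of Theorem \ref{ParContr1}(iii) come with the concrete identities proved in \cite{BC}, namely $\|D_{G_{n-1}}h\|=\|D_{\Gamma_{n-1}}\cdots D_{\Gamma_0}h\|$, $\|D_{F^*_{n-1}}f\|=\|D_{\Gamma^*_{n-1}}\cdots D_{\Gamma^*_0}f\|$, whence unitaries $Y_{n-1},Z_{n-1}$ with $\Gamma_n=Y^*_{n-1}L_{n-1}Z_{n-1}$; Corollary \ref{iscois1}(2) then yields \eqref{Vazhno1} in one stroke, with no induction. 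Your decomposition puts the big Toeplitz block in the ``$A$''-corner instead, where the \cite{BC} defect-norm identities do not directly apply — that is why you are left needing the unproven identification of $M$ (modulo an isometry) with $\begin{bmatrix}\Gamma_1\cr M_1\end{bmatrix}D_{\Gamma_0}$. To close your argument you would have to prove that identification, e.g.\ by relating the Taylor coefficients of $\Theta$ and $\Theta_1$ through the M\"obius representation \eqref{MREP} at the level of the truncated matrices, or simply import the \cite{BC} identities the way the paper does; as written, the central step is missing.
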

\begin{proof} Let
\[\begin{array}{l}
\sM_n:=\underbrace{\{0\}\oplus\{0\}\oplus\cdots\oplus\{0\}}_n\oplus\sM,\;
\sN_n:=\underbrace{\{0\}\oplus\{0\}\oplus\cdots\oplus\{0\}}_n\oplus\sN.\\
\end{array}
\]
Clearly, the operator
\begin{equation}
\label{stoepn} S_{\Theta,n}:=\begin{bmatrix}0&0&\ldots&0&C_0\cr
0&0&\ldots&C_0&C_1\cr \vdots&\vdots&\vdots&\vdots&\vdots\cr
C_0&C_{1}&C_{2}&\ldots&C_n
\end{bmatrix}\in\bL\left(\sM^{n+1},\sN^{n+1}\right)
\end{equation}
is a contraction. The matrix $S_{\Theta,n}$ we represent in the
block matrix form
\[
S_{\Theta,n}=\begin{bmatrix}Q_{n-1}&B_{n-1}\cr  B^T_{n-1}&C_n
\end{bmatrix}:\begin{array}{l}\sM^n\\\oplus\\\sM_n\end{array}\to
\begin{array}{l}\sN^n\\\oplus\\\sN_n\end{array},
\]
where
\[
Q_{n-1}=\begin{bmatrix} 0&0&\ldots&0&0\cr
0&0&\ldots&0&C_0\cr\vdots&\vdots&\vdots&\vdots&\vdots\cr
0&C_0&C_1&\ldots&C_{n-2}\end{bmatrix}=\begin{bmatrix}0&0\cr0&S_{\Theta,n-2}
\end{bmatrix},
\]
\[
B_{n-1}=\begin{bmatrix}C_0\cr C_1\cr\vdots\cr C_{n-1}
\end{bmatrix},\;
B^T_{n-1}=\begin{bmatrix}C_0&C_1\ldots&C_{n-1}\end{bmatrix}.
\]
Since $S_{\Theta,n}$ is a contraction, by Theorem \ref{ParContr1} (see \eqref{BLOCKS}) we
have
\[
\begin{array}{l}
B_{n-1}=D_{Q^*_{n-1}}G_{n-1},\; B^T_{n-1}=F_{n-1}D_{Q_{n-1}},\\
C_n=-F_{n-1}Q^*_{n-1}G_{n-1}+D_{F^*_{n-1}}L_{n-1}D_{G_{n-1}}.
\end{array}
\]
In \cite{BC} it is proved that
\[
\begin{array}{l}
||D_{F^*_{n-1}}f||^2=||D_{\Gamma^*_{n-1}}\cdots
D_{\Gamma^*_0}f||^2,\;
f\in\sN,\\
||D_{G_{n-1}}h||^2=||D_{\Gamma_{n-1}}\cdots D_{\Gamma_0}h||^2,\;
h\in\sM.\end{array}
\]
Therefore,
\[
\begin{array}{l}
D_{F^*_{n-1}}f=Y_{n-1}D_{\Gamma^*_{n-1}}\cdots D_{\Gamma^*_0}f,\; f\in\sN,\\
D_{G_{n-1}}h=Z_{n-1}D_{\Gamma_{n-1}}\cdots D_{\Gamma_0}h,\; h\in\sM,
\end{array}
\]
where $Y_{n-1}\in\bL(\sD_{\Gamma^*_{n-1}},\sD_{F^*_{n-1}})$ and
$Z_{n-1}\in\bL(\sD_{\Gamma_{n-1}},\sD_{G_{n-1}})$ are unitary
operators. It follows that
\[
\Gamma_n
=Y^*_{n-1}L_{n-1}Z_{n-1},\;D^2_{\Gamma_n}=Z^*_{n-1}D^2_{L_{n-1}}Z_{n-1}, \;D^2_{\Gamma^*_n}=Y^*_{n-1}D^2_{L^*_{n-1}}Y_{n-1}.
\]
Hence
\begin{equation}
\label{Shortn}
\begin{array}{l}
D_{G_{n-1}}D^2_{L_{n-1}}D_{G_{n-1}}=D_{\Gamma_0}D_{\Gamma_1}\cdots
D_{\Gamma_{n-1}} D^2_{\Gamma_n} D_{\Gamma_{n-1}}\cdots
D_{\Gamma_1}D_{\Gamma_0},\\
D_{F^*_{n-1}}D^2_{L^*_{n-1}}D_{F^*_{n-1}}=D_{\Gamma^*_0}D_{\Gamma^*_1}\cdots
D_{\Gamma^*_{n-1}} D^2_{\Gamma^*_n} D_{\Gamma^*_{n-1}}\cdots
D_{\Gamma^*_1}D_{\Gamma^*_0}.
\end{array}
\end{equation}
Now from Corollary \ref{iscois1} and \eqref{Shortn} it follows that
\begin{equation}
\label{sn1}
\begin{array}{l}
\left(D^2_{S_{\Theta,n}}\right)_{\sM_n}=D_{\Gamma_0}D_{\Gamma_1}\cdots
D_{\Gamma_{n-1}} D^2_{\Gamma_n} D_{\Gamma_{n-1}}\cdots
D_{\Gamma_1}D_{\Gamma_0}P_{\sM_n},\\
\left(D^2_{S^*_{\Theta,n}}\right)_{\sN_n}=D_{\Gamma^*_0}D_{\Gamma^*_1}\cdots
D_{\Gamma^*_{n-1}} D^2_{\Gamma^*_n} D_{\Gamma^*_{n-1}}\cdots
D_{\Gamma^*_1}D_{\Gamma^*_0}P_{\sN_n}.
\end{array}
\end{equation}
Let the operator $J_n\in\bL(\sM^{n+1},\sM^{n+1})$ be given by
\[
J_n=\begin{bmatrix}0&0&\ldots&0&I_\sM\cr 0&0&\ldots&I_\sM&0\cr
\vdots&\vdots&\vdots&\vdots&\vdots\cr I_\sM&0&\ldots&0&0
\end{bmatrix}.
\]
The operator $J_n$ is selfadjoint and unitary, $J_n\sM_n=\sM$, and,
clearly,
\[
S_{\Theta,n}=T_{\Theta,n}J_n,\;
D^2_{S_{\Theta,n}}=J_nD^2_{T_{\Theta,n}}J_n.
\]
It follows that
\[
\left(D^2_{S_{\Theta,n}}\right)_{\sM_n}=J_n\left(D^2_{T_{\Theta,n}}\right)_\sM
J_n.
\]
This relation and \eqref{sn1} lead to \eqref{Vazhno1}. Replacing
$\Theta$ by $\wt\Theta$ we get \eqref{Vazhno2}.
\end{proof}
Notice that the  relation $S^*_{\Theta,n}=J_nT^*_{\Theta_,n}$ yields
\begin{equation}
\label{auxil}
\left(D^2_{T^*_{\Theta,n}}\right)_{\sN_n}=\left(D^2_{S^*_{\Theta,n}}\right)_{\sN_n}=D_{\Gamma^*_0}D_{\Gamma^*_1}\cdots
D_{\Gamma^*_{n-1}} D^2_{\Gamma^*_n} D_{\Gamma^*_{n-1}}\cdots
D_{\Gamma^*_1}D_{\Gamma^*_0}P_{\sN_n}.
\end{equation}
The next statement is an immediate consequence of equalities \eqref{Vazhno1}, \eqref{Vazhno2}, and \eqref{rangeSh}.
\begin{corollary}
\label{compl}
The following conditions are equivalent:
\begin{enumerate}
\def\labelenumi{\rm (\roman{enumi})}
\item $\sM\subset \ran D_{T_{\Theta, n}}$,
 \item $\sN\subset \ran D_{T_{\wt \Theta, n}}$,
\item operators $\Gamma_0,\ldots,\Gamma_n$ have norms less than 1.
\end{enumerate}

\end{corollary}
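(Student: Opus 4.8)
The plan is to derive Corollary \ref{compl} directly from Theorem \ref{Main44} together with the range formula \eqref{rangeSh} for Kre\u\i n shorted operators. The key observation is that \eqref{Vazhno1} gives an explicit factorization of the shorted operator $\left(D^2_{T_{\Theta,n}}\right)_\sM$, and by \eqref{rangeSh} the condition $\sM\subset\ran D_{T_{\Theta,n}}$ is equivalent to a statement about the range of (the square root of) this shorted operator. Specifically, since $\sM\subseteq\ran D_{T_{\Theta,n}}^{1/2}$ would follow from $\sM\subset\ran D_{T_{\Theta,n}}$, the idea is to show that $\sM\subset\ran D_{T_{\Theta,n}}$ is equivalent to the shorted operator $\left(D^2_{T_{\Theta,n}}\right)_\sM$ being boundedly invertible on $\sM$, which in turn, via the factorization, forces each defect operator $D_{\Gamma_j}$ to be boundedly invertible, i.e. $\|\Gamma_j\|<1$.

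First I would fix $n$ and recall from \eqref{rangeSh} that $\ran\left(\left(D^2_{T_{\Theta,n}}\right)_\sM\right)^{1/2}=\ran D_{T_{\Theta,n}}\cap\sM$. Here I am using that $D_{T_{\Theta,n}}^2=D_{T_{\Theta,n}}\cdot D_{T_{\Theta,n}}$ has the same range for its square root as $D_{T_{\Theta,n}}$ itself, so $\ran\left(D^2_{T_{\Theta,n}}\right)^{1/2}=\ran D_{T_{\Theta,n}}$. Thus the inclusion $\sM\subset\ran D_{T_{\Theta,n}}$ is precisely the statement that $\sM\subseteq\ran\left(\left(D^2_{T_{\Theta,n}}\right)_\sM\right)^{1/2}$, which since $\left(D^2_{T_{\Theta,n}}\right)_\sM$ acts within $\sM$ (its kernel contains $\sM^\perp$) is equivalent to saying that $\left(D^2_{T_{\Theta,n}}\right)_\sM\uphar\sM$ has full range $\sM$, i.e. is boundedly invertible as an operator on the finite-dimensional-or-infinite-dimensional space $\sM$.

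Next I would use the factorization \eqref{Vazhno1}, which on $\sM$ reads
\[
\left(D^2_{T_{\Theta,n}}\right)_\sM\uphar\sM=D_{\Gamma_0}D_{\Gamma_1}\cdots D_{\Gamma_{n-1}}D^2_{\Gamma_n}D_{\Gamma_{n-1}}\cdots D_{\Gamma_1}D_{\Gamma_0},
\]
where each factor $D_{\Gamma_j}$ is a nonnegative contraction acting (after the appropriate identifications via the defect subspaces) in a chain of spaces. Writing $R_n=D^2_{\Gamma_n}D_{\Gamma_{n-1}}\cdots D_{\Gamma_0}$ one sees the operator is $R_n^*R_n$-like in structure; the point is that a product of nonnegative self-adjoint contractions sandwiched as $D_{\Gamma_0}\cdots D_{\Gamma_{n-1}}D^2_{\Gamma_n}D_{\Gamma_{n-1}}\cdots D_{\Gamma_0}$ is boundedly invertible on $\sM$ if and only if every single factor $D_{\Gamma_0},\dots,D_{\Gamma_{n-1}},D_{\Gamma_n}$ is boundedly invertible on its respective space. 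Bounded invertibility of $D_{\Gamma_j}$ is exactly $\|\Gamma_j\|<1$, since $D_{\Gamma_j}=(I-\Gamma_j^*\Gamma_j)^{1/2}$ is boundedly invertible iff $\Gamma_j^*\Gamma_j\le c\,I$ with $c<1$. This establishes (i) $\iff$ (iii); applying the same argument to $\wt\Theta$ using \eqref{Vazhno2} (and noting $\|\Gamma_j^*\|=\|\Gamma_j\|$, so the defect operators $D_{\Gamma_j^*}$ are boundedly invertible exactly when the $D_{\Gamma_j}$ are) gives (ii) $\iff$ (iii), completing the equivalence.

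The main obstacle I anticipate is justifying the claim that the sandwiched product is boundedly invertible if and only if each individual defect factor is, particularly in the infinite-dimensional setting where ``invertible'' must mean bounded invertibility rather than mere injectivity. The forward direction is clear since a product of bounded operators with bounded inverses has a bounded inverse. For the converse one must argue that if some $D_{\Gamma_j}$ fails to be bounded below, then so does the whole product; the cleanest route is to note that each $D_{\Gamma_j}$ is a self-adjoint nonnegative contraction, so $\ran D_{\Gamma_j}=\sD_{\Gamma_j}$ is the whole relevant space precisely when $\|\Gamma_j\|<1$, and to track how the range of the product relates to the ranges of the factors. Care is needed because the chain of defect spaces $\sD_{\Gamma_{j-1}}$ changes at each level; I would handle this by invoking \eqref{rangeSh} together with the explicit product structure, reducing the range inclusion $\sM\subset\ran D_{T_{\Theta,n}}$ step by step to the condition that none of the intermediate defect operators annihilate a nonzero vector in the relevant image, which is equivalent to the norm condition $\|\Gamma_j\|<1$ for $j=0,1,\dots,n$.
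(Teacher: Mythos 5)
Your proposal is correct and takes essentially the same route as the paper, which presents Corollary \ref{compl} as an immediate consequence of \eqref{Vazhno1}, \eqref{Vazhno2}, and \eqref{rangeSh}: you invoke exactly these three facts, observing via \eqref{rangeSh} that $\sM\subset\ran D_{T_{\Theta,n}}$ amounts to bounded invertibility of $\left(D^2_{T_{\Theta,n}}\right)_\sM\uphar\sM$, and via the factorization \eqref{Vazhno1} that this holds iff $\|\Gamma_j\|<1$ for $j=0,\ldots,n$ (a product of self-adjoint contractive factors is bounded below iff each factor is, since contractions only decrease lower bounds), with \eqref{Vazhno2} and $\|\Gamma_j^*\|=\|\Gamma_j\|$ handling (ii). One cosmetic slip: the phrase ``$\ran D_{\Gamma_j}=\sD_{\Gamma_j}$ is the whole relevant space'' conflates closedness of the range with fullness --- what your argument actually needs, and correctly uses, is that $\ran D_{\Gamma_j}$ equals the whole domain space $\sD_{\Gamma_{j-1}}$ precisely when $\|\Gamma_j\|<1$.
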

\begin{theorem}
\label{Main5} The equalities
\begin{equation}
\label{Vazhnrav1}
\left(D^2_{T_{\Theta}}\right)_{\sM}=s-\lim\limits_{n\to\infty}\left(D_{\Gamma_0}D_{\Gamma_1}\cdots
D_{\Gamma_{n-1}} D^2_{\Gamma_n} D_{\Gamma_{n-1}}\cdots
D_{\Gamma_1}D_{\Gamma_0}\right)P_\sM,
\end{equation}
\begin{equation}
\label{Vazhnrav2}
\left(D^2_{T_{\wt\Theta}}\right)_{\sN}=s-\lim\limits_{n\to\infty}\left(D_{\Gamma^*_0}D_{\Gamma^*_1}\cdots
D_{\Gamma^*_{n-1}} D^2_{\Gamma^*_n} D_{\Gamma^*_{n-1}}\cdots
D_{\Gamma^*_1}D_{\Gamma^*_0}\right)P_\sN
\end{equation}
hold.
\end{theorem}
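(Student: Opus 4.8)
The plan is to obtain the two equalities of Theorem \ref{Main5} as limiting cases of the finite-dimensional identities already established in Theorem \ref{Main44}. The key observation is that the Toeplitz operator $T_\Theta$ acting on $l_2(\sM)$ can be approximated, in a way compatible with the shorted-operator construction, by the truncations $T_{\Theta,n}$ acting on $\sM^{n+1}$. Concretely, if we embed $\sM^{n+1}$ into $l_2(\sM)$ in the natural way (the first $n+1$ coordinates), then $D^2_{T_\Theta}$ and the operators $D^2_{T_{\Theta,n}}$, padded by the identity on the complementary coordinates, fit exactly into the hypotheses of Theorem \ref{new2}. I would set $X=I-D^2_{T_\Theta}=T^*_\Theta T_\Theta$ and let $X_n$ be the block-diagonal operators on $l_2(\sM)=\sM^{n+1}\oplus\bigl(\bigoplus_{k>n}\sM\bigr)$ given by $T^*_{\Theta,n}T_{\Theta,n}$ on the first summand and the identity on the rest.

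First I would verify that $X_n\to X$ strongly. This is the routine point: since $T_{\Theta,n}$ is the compression of $T_\Theta$ to the first $n+1$ coordinates and $T_\Theta$ is a lower-triangular Toeplitz operator, for a fixed vector $f\in l_2(\sM)$ the tails contribute nothing in the limit and $X_n f\to Xf$. Second, and this is where Proposition \ref{vspom} enters, I would record that for the padded operators the shorted operator computes coordinatewise: $\bigl(D^2_{X_n}\bigr)_\sM=\bigl(D^2_{T_{\Theta,n}}\bigr)_\sM P_{\sM^{n+1}}$ restricted to $\sM$ equals $\bigl(D^2_{T_{\Theta,n}}\bigr)_\sM$, because $\sM$ sits inside $\sM^{n+1}$ and the identity block does not interfere. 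Combined with \eqref{Vazhno1} this identifies $\bigl(I-X_n\bigr)_\sM$ with the explicit product $D_{\Gamma_0}\cdots D_{\Gamma_{n-1}}D^2_{\Gamma_n}D_{\Gamma_{n-1}}\cdots D_{\Gamma_0}P_\sM$.

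The crucial structural input is the monotonicity hypothesis (2) of Theorem \ref{new2}: the sequence $\bigl\{\bigl(I-X_n\bigr)_\sM\bigr\}$ must be non-increasing. I expect this to be the main obstacle, and I would establish it by applying Proposition \ref{new1} together with part (2) of Proposition \ref{short}. The point is that $D^2_{T_{\Theta,n+1}}$, compressed by the projection onto the first $n+1$ coordinates, dominates $D^2_{T_{\Theta,n}}$ in a suitable sense, so that passing to shorted operators onto $\sM$ reverses into a non-increasing chain; the explicit products in \eqref{Vazhno1} make this monotonicity transparent, since each factor $D_{\Gamma_n}$ is a contraction and appending one more such sandwiched factor can only decrease the operator. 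Once monotonicity is in hand, Theorem \ref{new2} yields
\[
s-\lim_{n\to\infty}\bigl(I-X_n\bigr)_\sM\le \bigl(I-X\bigr)_\sM=\bigl(D^2_{T_\Theta}\bigr)_\sM .
\]

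Finally I would argue the reverse inequality to upgrade $\le$ to equality. Here I would use part (2) of Proposition \ref{short} directly: since each $T_{\Theta,n}$ is a compression of $T_\Theta$, one has $D^2_{T_{\Theta,n}}\ge P_{\sM^{n+1}}D^2_{T_\Theta}P_{\sM^{n+1}}$ in the appropriate identification, whence $\bigl(D^2_{T_{\Theta,n}}\bigr)_\sM\ge \bigl(D^2_{T_\Theta}\bigr)_\sM$ for every $n$ by monotonicity of the shorting operation, giving the limit the lower bound $\bigl(D^2_{T_\Theta}\bigr)_\sM$. The two inequalities together prove \eqref{Vazhnrav1}. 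Then \eqref{Vazhnrav2} follows at once by replacing $\Theta$ with $\wt\Theta$ and invoking that the Schur parameters of $\wt\Theta$ are $\{\Gamma^*_n\}$, exactly as in the reduction used at the end of the proof of Theorem \ref{Main44}.
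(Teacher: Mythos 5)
Your proposal is correct and follows essentially the same route as the paper's proof: truncate to $\wh T_{\Theta,n}=P_nT_\Theta P_n$, use Proposition \ref{vspom} to identify the shorted operator of the padded defect with $\left(D^2_{T_{\Theta,n}}\right)_\sM$, get the monotonicity hypothesis from the explicit products of Theorem \ref{Main44}, apply Theorem \ref{new2} for the upper bound, and obtain the lower bound from $D^2_{\wh T_{\Theta,n}}\ge P_nD^2_{T_\Theta}P_n$ combined with Propositions \ref{short} and \ref{new1}. The only (immaterial) deviation is that you pad $X_n$ with the identity on the complementary coordinates, so your $I-X_n$ carries a zero block where the paper's $D^2_{\wh T_{\Theta,n}}$ carries the identity; both paddings produce the same shorted operators on $\sM$, so the argument goes through unchanged.
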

\begin{proof}
Let $P_n$ be the orthogonal projection onto $\sM^{n+1}$ in $l_2(\sM)$
and let $\wh T_{\Theta,n}:=P_nT_\Theta P_n$. Then $T_n$ takes the block operator
matrix form
\[
\wh T_{\Theta,n}=\begin{bmatrix} T_{\Theta,n}&0\cr
0&0\end{bmatrix}:\begin{array}{l}\sM^{n+1}\\\oplus\\(\sM^{n+1})^\perp\end{array}\to
\begin{array}{l}\sN^{n+1}\\\oplus\\(\sN^{n+1})^\perp\end{array},
\]
where
\[
(\sM^{n+1})^\perp=l_2(\sM)\ominus\sM^{n+1},\;(\sN^{n+1})^\perp=l_2(\sN)\ominus\sN^{n+1}.
\]
Hence
\[
D^2_{\wh T_{\Theta,n}}=\begin{bmatrix}D^2_{T_{\Theta,n}}&0\cr 0& I \end{bmatrix}.
\]
Since $\sM\subset\sM^{n+1}$, from Proposition \ref{vspom} it follows
that
\begin{equation}
\label{rav1} \left(D^2_{\wh
T_{\Theta,n}}\right)_\sM\uphar\sM=\left(D^2_{T_{\Theta,n}}\right)_\sM\uphar\sM.
\end{equation}
In addition
\[
||D_{\wh
T_{\Theta,n}}f||^2=||D_{T_{\Theta}}P_nf||^2+||(I-P_n)f||^2+||(I-P_n)T_\Theta
P_nf||^2,\; f\in l_2(\sM).
\]
It follows that
\[
D^2_{\wh T_{\Theta,n}}\ge P_n D^2_{T_\Theta}P_n.
\]
Using Propositions \ref{short} and \ref{new1} we get
\begin{equation}
\label{nerav1}
\left(D^2_{\wh T_{\Theta,n}}\right)_\sM\ge \left(P_n D^2_{T_\Theta}P_n\right)_\sM\ge \left(D^2_{T_\Theta}\right)_\sM.
\end{equation}
Let $X=T^*_\Theta T_\Theta$ and $X_n=\wh T^*_{\Theta,n}\wh T_{\Theta,n}=P_n \wh T^*_{\Theta}P_n\wh T_{\Theta}P_n$, $n=1,2,\ldots.$
Then $X$ and $X_n$ are nonnegative selfadjoint contractions and
\[
s-\lim\limits_{n\to \infty}X_n=X,\; D^2_{\wh T_{\Theta,n}}=I-X_n,\;D^2_{T_\Theta}=I-X.
\]
From \eqref{Vazhno1} and \eqref{rav1} it follows that the sequence $\{D^2_{\wh T_{\Theta,n}}\}_{n=1}^\infty$ is non-increasing.
Therefore, by Theorem \ref{new2} we get that
\[
 s-\lim\limits_{n\to\infty}\left(D^2_{\wh T_{\Theta,n}}\right)_\sM \le \left(D^2_{T_\Theta}\right)_\sM.
\]
On the other hand \eqref{nerav1} implies
\[
 s-\lim\limits_{n\to\infty}\left(D^2_{\wh T_{\Theta,n}}\right)_\sM\ge \left(D^2_{T_\Theta}\right)_\sM.
 \]
 Hence
 \[
s-\lim\limits_{n\to\infty}\left(D^2_{\wh T_{\Theta,n}}\right)_\sM= \left(D^2_{T_\Theta}\right)_\sM.
 \]
Now from \eqref{Vazhno1} and \eqref{rav1} we obtain \eqref{Vazhnrav1} and similarly \eqref{Vazhnrav2}.
\end{proof}
Notice that it is proved the equalities
\begin{equation}
\label{limrav}
\begin{array}{l}
s-\lim\limits_{n\to\infty}\left(D^2_{T_{\Theta,
n}}\right)_\sM\uphar\sM=\left(D^2_{T_{\Theta}}\right)_\sM\uphar\sM,\\
s-\lim\limits_{n\to\infty}\left(D^2_{\wt T_{\Theta,
n}}\right)_\sN\uphar\sN=\left(D^2_{\wt
T_{\Theta}}\right)_\sN\uphar\sN.
\end{array}
\end{equation}
\begin{corollary}
\label{compl1}
The following conditions are equivalent:
\begin{enumerate}
\def\labelenumi{\rm (\roman{enumi})}
\item $\sM\subset \ran D_{T_{\Theta}}$,
 \item $\sN\subset \ran D_{T_{\wt \Theta}}$,
\item all Schur parameters $\{\Gamma_k\}_{k=0}^\infty$  of $\Theta$ have norms less than 1.
\end{enumerate}
\end{corollary}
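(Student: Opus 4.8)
The plan is to read both membership conditions (i) and (ii) off the Kre\u\i n shorted operators by means of the range formula \eqref{rangeSh}. Applying it to $S=D^2_{T_\Theta}$ and $\cK=\sM$ gives $\ran\left(D^2_{T_\Theta}\right)^{1/2}_\sM=\ran D_{T_\Theta}\cap\sM$, so that (i) is equivalent to $\ran\left(D^2_{T_\Theta}\right)^{1/2}_\sM=\sM$; since the shorted operator annihilates $\sM^\perp$ and has its range inside $\sM$, this means that $\left(D^2_{T_\Theta}\right)_\sM\uphar\sM$ has range equal to $\sM$, i.e. is boundedly invertible on $\sM$. The same reading of \eqref{rangeSh} with $S=D^2_{T_{\wt\Theta}}$ and $\cK=\sN$ turns (ii) into the bounded invertibility of $\left(D^2_{T_{\wt\Theta}}\right)_\sN\uphar\sN$ on $\sN$. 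Thus the corollary reduces to comparing the invertibility of these two limiting shorted operators with condition (iii).

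First I would dispose of the finite level. By Corollary \ref{compl} the conditions $\sM\subset\ran D_{T_{\Theta,n}}$, $\sN\subset\ran D_{T_{\wt\Theta,n}}$, and $\|\Gamma_0\|,\ldots,\|\Gamma_n\|<1$ are equivalent for every fixed $n$; reading \eqref{rangeSh} at level $n$ as above, each of the first two amounts to the bounded invertibility of $\left(D^2_{T_{\Theta,n}}\right)_\sM\uphar\sM$, respectively of $\left(D^2_{T_{\wt\Theta,n}}\right)_\sN\uphar\sN$. Consequently condition (iii), namely $\|\Gamma_k\|<1$ for all $k$, is precisely the assertion that the finite condition of Corollary \ref{compl} holds at every level $n$, and the equivalence (i)$\Leftrightarrow$(ii) will fall out of the scheme, since at each finite level the $\sM$-side and the $\sN$-side coincide with (iii).

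For the implications (i)$\Rightarrow$(iii) and (ii)$\Rightarrow$(iii) I would use monotonicity. The proof of Theorem \ref{Main5} establishes $D^2_{\wh T_{\Theta,n}}\ge P_nD^2_{T_\Theta}P_n$, whence by Propositions \ref{short} and \ref{new1} together with \eqref{rav1} one gets $\left(D^2_{T_{\Theta,n}}\right)_\sM\uphar\sM\ge\left(D^2_{T_\Theta}\right)_\sM\uphar\sM$; combined with \eqref{rangeSh} this yields $\ran D_{T_{\Theta,n}}\cap\sM\supseteq\ran D_{T_\Theta}\cap\sM$. If (i) holds the latter equals $\sM$, so $\sM\subset\ran D_{T_{\Theta,n}}$ for every $n$, and Corollary \ref{compl} then forces $\|\Gamma_k\|<1$ for all $k$, which is (iii). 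The argument on the $\sN$-side is identical.

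The hard part will be the converse passage (iii)$\Rightarrow$(i),(ii), where bounded invertibility must be transported through a strong limit. Here I would invoke the strong-limit identities \eqref{Vazhnrav1}, \eqref{Vazhnrav2} of Theorem \ref{Main5} and the estimate \eqref{Newnew} of Theorem \ref{new2}, which bounds $s-\lim\left(D^2_{\wh T_{\Theta,n}}\right)_\sM$ from above by $\left(D^2_{T_\Theta}\right)_\sM$, in tandem with the reverse inequality \eqref{nerav1}. The delicate point, and the main obstacle, is that a non-increasing strong limit of boundedly invertible nonnegative operators need not remain bounded below: one must show that the invertibility guaranteed at every finite level by (iii) survives in the limit. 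I expect this to require exploiting the explicit product form $D_{\Gamma_0}\cdots D_{\Gamma_{n-1}}D^2_{\Gamma_n}D_{\Gamma_{n-1}}\cdots D_{\Gamma_0}$ of the truncated shorted operators furnished by \eqref{Vazhno1}, \eqref{Vazhno2}, and the simultaneous use of both the $\sM$- and the $\sN$-version rather than either one in isolation.
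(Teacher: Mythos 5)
Your opening reductions are correct and agree with the paper: via \eqref{rangeSh}, condition (i) is equivalent to the bounded invertibility of $\left(D^2_{T_\Theta}\right)_\sM\uphar\sM$ on $\sM$, and (ii) to that of $\left(D^2_{T_{\wt\Theta}}\right)_\sN\uphar\sN$ on $\sN$; and your monotonicity argument for (i)$\Rightarrow$(iii), (ii)$\Rightarrow$(iii) is precisely the paper's proof, namely $\left(D^2_{T_\Theta}\right)_\sM\uphar\sM\le\left(D^2_{T_{\Theta,n}}\right)_\sM\uphar\sM$ forces $\sM\subset\ran D_{T_{\Theta,n}}$ for every $n$, whence Corollary \ref{compl} gives $\|\Gamma_k\|<1$ for all $k$. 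Up to this point your text and the paper coincide step for step.

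The gap is the direction (iii)$\Rightarrow$(i),(ii), which you only announce as ``the hard part'' and never prove. It cannot be proved: the obstacle you flag --- that a non-increasing strong limit of boundedly invertible nonnegative operators need not remain bounded below --- is not merely delicate but fatal here. Take $\sM=\sN=\dC$ and let $\Theta$ be an inner function which is not a finite Blaschke product (for instance an infinite Blaschke product). Then the Schur algorithm does not terminate and all $|\gamma_k|<1$, so (iii) holds; but $T_\Theta$ is the matrix of the isometry of multiplication by $\Theta$ on $H^2$, hence $D_{T_\Theta}=0$ and (i) fails. Concretely, in the scalar case \eqref{Vazhnrav1} gives $\left(D^2_{T_\Theta}\right)_\sM\uphar\sM=\prod_{k\ge0}\left(1-|\gamma_k|^2\right)$, so under (iii) condition (i) is equivalent to $\sum_{k\ge0}|\gamma_k|^2<\infty$ (equivalently, by the Boyd formula quoted in the paper, $\ln\left(1-|\Theta|^2\right)\in L^1(\dT)$), which is strictly stronger than (iii). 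Consequently no exploitation of the product form $D_{\Gamma_0}\cdots D_{\Gamma_{n-1}}D^2_{\Gamma_n}D_{\Gamma_{n-1}}\cdots D_{\Gamma_0}$, nor the simultaneous use of the $\sM$- and $\sN$-sides, can close your gap: the invertibility present at every finite level genuinely degenerates in the limit. You should also be aware that the paper's own proof has exactly the same hole --- it establishes only (i)$\Rightarrow$(iii) and, symmetrically, (ii)$\Rightarrow$(iii), and then asserts that the full equivalence ``follows from Corollary \ref{compl}'', an appeal that is valid level by level but does not survive the strong limit. Your instinct about where the difficulty sits was exactly right; what it detects is a defect in the statement itself (the correct equivalent of (i) is the bounded invertibility of the limit operator $s-\lim_{n\to\infty}D_{\Gamma_0}\cdots D_{\Gamma_{n-1}}D^2_{\Gamma_n}D_{\Gamma_{n-1}}\cdots D_{\Gamma_0}$, not the norm condition (iii)), rather than a missing trick in your argument.
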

\begin{proof}
 Since $(D^2_{T_{\Theta}})_\sM\uphar\sM\le (D^2_{T_{\Theta_n}})_\sM\uphar\sM$ for each $n$, the condition $\sM\subset \ran D_{T_{\Theta}}$ implies
$\sM\subset \ran D_{T_{\Theta_n}}$ for each $n$. Then equivalence of (i), (ii), (iii) follows from Corollary \ref{compl}.
\end{proof}
Let $H^2(\sM)$, $H^2(\sN)$ be the Hardy spaces \cite{SF}. Denote by
$\cP(\sM)$ $(\cP(\sN))$ the linear manifolds of all polynomial from
$H^2(\sM)$ $(H^2(\sN))$
 and by $\cP_n(\sM)$ $(\cP_n(\sN))$ the linear space
of all polynomials of degree at most $n$. By $P^\sM_n$ $(P^\sN_n)$ we denote the orthogonal projection in
$H^2(\sM)$ $(H^2(\sN))$ onto $\cP_n(\sM)$ $(\cP_n(\sN)$).
\begin{theorem}
\label{inff}
\label{Main55} Let $\Theta\in\bS(\sM,\sN)$ and let
$\{\Gamma_0,\Gamma_1,\cdots\}$ be the Schur parameters of $\Theta$. Then
\[
\begin{array}{l}
\inf\limits_{\begin{array}{l}p\in\cP_n(\sM)\\ p(0)=0\end{array}}\left\{\frac{1}{2\pi}\int\limits_0^{2\pi}
\left(||f-p(e^{it})||^2_\sM-||P^\sN_n\Theta(e^{it})(f-p(e^{it}))||^2_\sN\right) dt\right\}\\
\qquad\qquad\quad=
 ||D_{\Gamma_n} D_{\Gamma_{n-1}}\cdots
D_{\Gamma_1}D_{\Gamma_0}f||^2,\; f\in\sM,
\end{array}
\]
\[
\begin{array}{l}
\inf\limits_{\begin{array}{l}p\in\cP(\sM)\\ p(0)=0\end{array}}\left\{\frac{1}{2\pi}\int\limits_0^{2\pi}
\left(||f-p(e^{it})||^2_\sM-||\Theta(e^{it})(f-p(e^{it}))||^2_\sN\right) dt\right\}\\
\qquad\qquad\quad=
\lim\limits_{n\to\infty} ||D_{\Gamma_n} D_{\Gamma_{n-1}}\cdots
D_{\Gamma_1}D_{\Gamma_0}f||^2,\; f\in\sM,
\end{array}
\]
\[
\begin{array}{l}
\inf\limits_{\begin{array}{l}p\in\cP_n(\sN)\\ p(0)=0\end{array}}\left\{\frac{1}{2\pi}\int\limits_0^{2\pi}
\left(||h-p(e^{it})||^2_\sN-||P^\sM_n\wt\Theta(e^{it})(h-p(e^{it}))||^2_\sM\right) dt\right\}\\
\qquad\qquad\quad=
 ||D_{\Gamma^*_n} D_{\Gamma^*_{n-1}}\cdots
D_{\Gamma^*_1}D_{\Gamma^*_0}h||^2,\; h\in\sN,
\end{array}
\]
\[
\begin{array}{l}
\inf\limits_{\begin{array}{l}p\in\cP_n(\sN)\\ p(0)=0\end{array}}\left\{\frac{1}{2\pi}\int\limits_0^{2\pi}
\left(||h-p(e^{it})||^2_\sN-||\wt\Theta(e^{it})(h-p(e^{it}))||^2_\sM\right) dt\right\}\\
\qquad\qquad\quad=
 \lim\limits_{n\to\infty}||D_{\Gamma^*_n} D_{\Gamma^*_{n-1}}\cdots
D_{\Gamma^*_1}D_{\Gamma^*_0}h||^2,\; h\in\sN.
\end{array}
\]
\end{theorem}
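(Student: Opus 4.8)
The plan is to translate each integral into a quadratic form of a (truncated) Toeplitz matrix and then recognize the infimum as a Kre\u\i n shorted operator, after which Theorems \ref{Main44} and \ref{Main5} deliver the right-hand sides. I would begin with the first identity. Fix $f\in\sM$ and set $g:=f-p$, so that $g$ ranges over all polynomials in $\cP_n(\sM)$ with $g(0)=f$; denote its coefficient vector by $\vec g=(f,g_1,\dots,g_n)\in\sM^{n+1}$. By Parseval's identity, $\frac{1}{2\pi}\int_0^{2\pi}\|g(e^{it})\|^2_\sM\,dt=\|\vec g\|^2_{\sM^{n+1}}$. The key computation is that, since $\deg g\le n$, the first $n+1$ Taylor coefficients of $\Theta g$ depend only on $C_0,\dots,C_n$ and $g_0,\dots,g_n$: the $m$-th coefficient ($0\le m\le n$) equals $\sum_{k=0}^m C_{m-k}g_k$, which is precisely the $m$-th block of $T_{\Theta,n}\vec g$. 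Hence $P^\sN_n\Theta g$ has coefficient vector $T_{\Theta,n}\vec g$ and, again by Parseval, $\frac{1}{2\pi}\int_0^{2\pi}\|P^\sN_n\Theta(e^{it})g(e^{it})\|^2_\sN\,dt=\|T_{\Theta,n}\vec g\|^2_{\sN^{n+1}}$.

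Consequently the bracketed quantity equals $\|\vec g\|^2-\|T_{\Theta,n}\vec g\|^2=(D^2_{T_{\Theta,n}}\vec g,\vec g)$. Taking the infimum over $p$ with $p(0)=0$ amounts to letting $g_1,\dots,g_n$ run freely while keeping the first coordinate fixed at $f$; under the identification of $\sM$ with the first summand of $\sM^{n+1}$, this is exactly the variational description \eqref{Sh1} of the shorted operator with $\cK=\sM$. Thus the left-hand side equals $\big((D^2_{T_{\Theta,n}})_\sM f,f\big)$, and \eqref{Vazhno1} of Theorem \ref{Main44} rewrites this as $\|D_{\Gamma_n}D_{\Gamma_{n-1}}\cdots D_{\Gamma_0}f\|^2$, which is the first asserted identity.

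For the second identity I would repeat the argument in $l_2(\sM)$ and $l_2(\sN)$ with the full Toeplitz operator $T_\Theta$. Now $g=f-p$ is a polynomial of arbitrary degree, so $\Theta g\in H^2(\sN)$ and, since no projection is applied, $\frac{1}{2\pi}\int_0^{2\pi}\|\Theta(e^{it})g(e^{it})\|^2_\sN\,dt=\|T_\Theta\vec g\|^2_{l_2(\sN)}$, where $\vec g$ is the finitely supported coefficient sequence with first entry $f$; the bracketed quantity is again $(D^2_{T_\Theta}\vec g,\vec g)$. The one extra point here is that the infimum in \eqref{Sh1} runs over \emph{all} $\varphi\in\sM^\perp\subset l_2(\sM)$, whereas polynomials give only finitely supported such $\varphi$. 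I expect this to be the main (though mild) obstacle: I would resolve it by noting that the finitely supported sequences with fixed first coordinate $f$ are dense in $f+\sM^\perp$ and that $\vec g\mapsto\|D_{T_\Theta}\vec g\|^2$ is norm-continuous, so the infimum over polynomials coincides with $\big((D^2_{T_\Theta})_\sM f,f\big)$. Equality \eqref{Vazhnrav1} of Theorem \ref{Main5} then gives $\big((D^2_{T_\Theta})_\sM f,f\big)=\lim_{n\to\infty}\|D_{\Gamma_n}\cdots D_{\Gamma_0}f\|^2$, the limit existing because each $D_{\Gamma_k}$ is a contraction, so the norms decrease; this is the second identity.

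Finally, the third and fourth identities are obtained by applying the same two arguments to $\wt\Theta(\lambda)=\Theta^*(\bar\lambda)=\sum_n\lambda^n C_n^*$, whose Schur parameters are $\{\Gamma_n^*\}$. The truncated and full Toeplitz matrices of $\wt\Theta$ are $T_{\wt\Theta,n}$ and $T_{\wt\Theta}$, and \eqref{Vazhno2}, \eqref{Vazhnrav2} express $(D^2_{T_{\wt\Theta,n}})_\sN$ and $(D^2_{T_{\wt\Theta}})_\sN$ in terms of the $D_{\Gamma_k^*}$; interchanging the roles of $\sM$ and $\sN$ and of $f$ and $h$ then yields the two remaining formulas verbatim. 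The only genuinely delicate step throughout is the density/continuity passage in the infinite case; everything else is a direct combination of Parseval's identity, the Toeplitz bookkeeping, the definition \eqref{Sh1}, and the already-established Theorems \ref{Main44} and \ref{Main5}.
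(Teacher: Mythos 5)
Your proof is correct and follows essentially the same route as the paper: Parseval's identity to identify the integrals with $\|\vec g\|^2-\|T_{\Theta,n}\vec g\|^2$ (resp. $\|\vec g\|^2-\|T_{\Theta}\vec g\|^2$), the variational definition \eqref{Sh1} of the shorted operator with $\cK=\sM$ (resp. $\cK=\sN$), and then the identities \eqref{Vazhno1}, \eqref{Vazhnrav1}, \eqref{Vazhno2}, \eqref{Vazhnrav2}. The density/continuity passage you supply for the infinite case --- finitely supported sequences with first coordinate $f$ being dense in $f+\sM^\perp$ and $\vec g\mapsto\|D_{T_\Theta}\vec g\|^2$ being norm-continuous --- is precisely the detail the paper leaves implicit, and you handle it correctly.
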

\begin{proof}
One can easily see that
\[
||T_\Theta\vec a||^2_{l_2(\sN)}=||\Theta a||^2_{H^2(\sN)}=\frac{1}{2\pi}\int\limits_{0}^{2\pi}||\Theta(e^{it})a(e^{it})||^2_\sN dt,
\]
where $\vec a=(a_0, a_1,\ldots)\in l_2(\sM)$, $a(z)=\sum_{k=0}^\infty a_k z^k\in H^2(\sM)$.
If $p(z)=p_0+p_1 z+\dots p_n z^n$ and $\vec p=(p_0, p_1,\ldots,p_n)\in\sM^{n+1}$, then
\[
||T_{\Theta,n}\vec p||^2_{\sN^{n+1}}=||P^\sN_n \Theta p||^2_{H^2(\sN)}.
\]
To complete the proof of the theorem we use definition \eqref{Sh1} of the shorted operator and equalities \eqref{Vazhno1}, \eqref{Vazhnrav1},
\eqref{Vazhno1},
and \eqref{Vazhnrav2}.
\end{proof}
\subsection{Schur parameters, controllability, and observability}
\begin{theorem}
\label{Main1} Let $
\tau=\left\{\begin{bmatrix}D&C\cr B& A\end{bmatrix}; \mathfrak
M,\mathfrak N,\mathfrak H\right\} $
be a simple conservative system with transfer function $\Theta$. Let
$
\left\{\Gamma_0,\Gamma_1,\ldots\right\}$
be the Schur parameters of $\Theta$. Then for each $n$ the relations
\begin{equation}
\label{proj1} ||P_{n,0}Bh||^2=\left\|D_{\Gamma_n}D_{\Gamma_{n-1}}
\cdots D_{\Gamma_0}h\right\|^2,\; h\in\sM,
\end{equation}
\begin{equation}
\label{proj2}
||P_{0,n}C^*f||^2=\left\|D_{\Gamma^*_n}D_{\Gamma^*_{n-1}} \cdots
D_{\Gamma^*_0}f\right\|^2, \; f\in\sN
\end{equation}
hold.
\end{theorem}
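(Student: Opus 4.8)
The plan is to prove \eqref{proj1} first and then obtain \eqref{proj2} by passing to the adjoint system. The engine of the argument is the tower of simple conservative systems $\tau_n^{(0)}$ furnished by Theorem \ref{ITERATES11}: each $\tau_n^{(0)}$ realizes the associated function $\Theta_n$, has feedthrough $\Gamma_n$, state space $\sH_{n,0}=\ker D_{A^n}$, and input operator $B_n=\bigl(D^{-1}_{\Gamma_{n-1}}\cdots D^{-1}_{\Gamma_0}(B^*\uphar\sH_{n,0})\bigr)^*$, together with the composition property of these realizations, $\bigl(\tau_n^{(0)}\bigr)_1^{(0)}=\tau_{n+1}^{(0)}$. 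I will use the induction that this tower provides precisely in order to sidestep having to analyze the iterated Moore--Penrose pseudo-inverse globally.

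First I record two consequences of conservativity. Since each $\tau_n^{(0)}$ is conservative, its colligation matrix is unitary, so its first block column is isometric and $B_n^*B_n=I-\Gamma_n^*\Gamma_n=D^2_{\Gamma_n}$ on $\sD_{\Gamma_{n-1}}$; in particular $\|B_n\xi\|=\|D_{\Gamma_n}\xi\|$ and $\ker B_n=\ker D_{\Gamma_n}$ for every $n$ (for $n=0$ this is just $B^*B=D^2_{\Gamma_0}$, as $\Gamma_0=D=\Theta(0)$).

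The core of the proof is the operator identity
\[
P_{n,0}B=B_n D_{\Gamma_{n-1}}\cdots D_{\Gamma_1}D_{\Gamma_0},
\]
which I establish by induction on $n$; the case $n=0$ is trivial, with $B_0=B$ and empty product. For the step, I apply the $n=1$ instance to the system $\tau_n^{(0)}$: Theorem \ref{ITERATES11} gives $B_{n+1}=P_{n+1,0}B_n D^{-1}_{\Gamma_n}$, and \eqref{AKNM} gives $\ker D_{A_{n,0}}=\sH_{n+1,0}$. Multiplying on the right by $D_{\Gamma_n}$ and using $D^{-1}_{\Gamma_n}D_{\Gamma_n}=P_{\sD_{\Gamma_n}}$ together with $B_nP_{\sD_{\Gamma_n}}=B_n$ (which holds because $\ker B_n=\ker D_{\Gamma_n}$) yields the one-step relation $P_{n+1,0}B_n=B_{n+1}D_{\Gamma_n}$. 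Combining this with the inductive hypothesis and $P_{n+1,0}=P_{n+1,0}P_{n,0}$ (valid since $\sH_{n+1,0}\subseteq\sH_{n,0}$) gives
\[
P_{n+1,0}B=P_{n+1,0}P_{n,0}B=P_{n+1,0}B_n D_{\Gamma_{n-1}}\cdots D_{\Gamma_0}=B_{n+1}D_{\Gamma_n}\cdots D_{\Gamma_0},
\]
which closes the induction. Once the identity is in hand, \eqref{proj1} follows in one line: $\|P_{n,0}Bh\|^2=\|B_n D_{\Gamma_{n-1}}\cdots D_{\Gamma_0}h\|^2=\|D_{\Gamma_n}D_{\Gamma_{n-1}}\cdots D_{\Gamma_0}h\|^2$, where the last equality applies $\|B_n\xi\|=\|D_{\Gamma_n}\xi\|$ to $\xi=D_{\Gamma_{n-1}}\cdots D_{\Gamma_0}h\in\sD_{\Gamma_{n-1}}$.

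Finally, \eqref{proj2} is obtained by applying \eqref{proj1} to the adjoint system $\tau^*=\left\{\begin{bmatrix}D^*&B^*\cr C^*&A^*\end{bmatrix};\sN,\sM,\sH\right\}$, which is again simple and conservative, has transfer function $\wt\Theta$ with Schur parameters $\{\Gamma_n^*\}$, input operator $C^*$, and for which $\ker D_{(A^*)^n}=\sH_{0,n}$ with orthogonal projection $P_{0,n}$. I expect the only delicate point to be the bookkeeping in the inductive step---correctly identifying $\bigl(\tau_n^{(0)}\bigr)_1^{(0)}$ with $\tau_{n+1}^{(0)}$ and matching the two available formulas for its input operator---whereas the analytic heart, the cancellation $B_nP_{\sD_{\Gamma_n}}=B_n$, is immediate from conservativity.
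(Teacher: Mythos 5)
Your proof is correct, and it shares the paper's inductive skeleton: the tower of simple conservative systems $\tau_n^{(0)}$ from Theorem \ref{ITERATES11}, together with the one-step intertwining $P_{n+1,0}B_n=B_{n+1}D_{\Gamma_n}$, which is exactly the paper's identity \eqref{B1B} in its $n$-th incarnation (the paper proves it by the same adjoint pairing that justifies your formula $B_{n+1}=P_{n+1,0}B_nD^{-1}_{\Gamma_n}$). Where you genuinely diverge is the base step. The paper obtains $\|D_{\Gamma_1}D_{\Gamma_0}f\|=\|P_{1,0}Bf\|$ by invoking the parametrization machinery: via Proposition \ref{uncion} and \eqref{connect4} it identifies $\Gamma_1=GF=KP_{\sD_A}M$, uses the isometricity of $F$ and $M^*$, and the relation $MD_{\Gamma_0}=D_{A^*}M=B$. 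You bypass all of that with the one-line observation that the first block column of the unitary colligation of $\tau_n^{(0)}$ is isometric, so $\Gamma_n^*\Gamma_n+B_n^*B_n=I$ on $\sD_{\Gamma_{n-1}}$, whence $\|B_n\xi\|=\|D_{\Gamma_n}\xi\|$ and $\ker B_n=\ker D_{\Gamma_n}$ --- the latter powering your cancellation $B_nP_{\sD_{\Gamma_n}}=B_n$. This buys a proof that needs neither Proposition \ref{uncion} nor \eqref{connect1}--\eqref{connect4}, packages the induction as the single clean operator identity $P_{n,0}B=B_nD_{\Gamma_{n-1}}\cdots D_{\Gamma_0}$, and makes \eqref{proj2} genuinely automatic via the adjoint system $\tau^*$ (the paper settles for ``similarly''); what the paper's longer route buys is the explicit formula $\Gamma_1=KP_{\sD_A}M$, a byproduct it exploits elsewhere. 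Two small points of care, neither a gap: since $D^{-1}_{\Gamma_n}$ is a Moore--Penrose pseudo-inverse and may be unbounded, the relation $B_{n+1}=P_{n+1,0}B_nD^{-1}_{\Gamma_n}$ should be read on $\ran D_{\Gamma_n}$, which suffices because you immediately right-multiply by $D_{\Gamma_n}$ and use $D^{-1}_{\Gamma_n}D_{\Gamma_n}=P_{\sD_{\Gamma_n}}$ (equivalently, one can verify $B_{n+1}D_{\Gamma_n}=P_{n+1,0}B_n$ directly by the adjoint computation, as the paper does for \eqref{B1B}); and your bookkeeping $(\tau_n^{(0)})_1^{(0)}=\tau_{n+1}^{(0)}$ combined with $\ker D_{A_{n,0}}=\sH_{n+1,0}$ from \eqref{AKNM} is precisely what the paper's step ``$(A_{n,0})_{1,0}=A_{n+1,0}$'' encodes, so it is licensed by the cited infrastructure.
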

\begin{proof}
Clearly $D=\Gamma_0$. The unitary operator
\[
U=\begin{bmatrix}D&C\cr B&
A\end{bmatrix}:\begin{array}{l}\sM\\\oplus\\\sH\end{array}\to
\begin{array}{l}\sN\\\oplus\\\sH\end{array}
\]
admits the representations (see Theorem \ref{ParContr1})
\[
U=
\begin{bmatrix} -KA^*M+D_{K^*}XD_{M}& KD_{A}\cr
D_{A^*}M &A\end{bmatrix}=\begin{bmatrix}\Gamma_0&D_{\Gamma^*_0}G\cr
FD_{\Gamma_0} &-FD^*G+D_{F^*}LD_{G}\end{bmatrix}.
\]
From Theorem \ref{Schuriso} it follows $\Gamma_1=GF$.
Equality \eqref{connect4} yields
\[
\Gamma_1=KP_{\sD_A}M.
\]
Now taking into account that $F\in\bL(\sD_{\Gamma_0},\sM)$ is
isometry and relation \eqref{conect2}, for $f\in\sD_{\Gamma_0}$ we
get
\[
\begin{array}{l}
||D_{\Gamma_1}f||^2=||f||^2-||KP_{\sD_A}
Mf||^2\\
=||Mf||^2-||P_{\sD_A} Mf||^2=||P_{1,0}Mf||^2.
\end{array}
\]
Hence
\[
||D_{\Gamma_1}D_{\Gamma_0}f||^2=||P_{1,0}MD_{\Gamma_0}f||^2,\;
f\in\sM.
\]
Because $M^*\in\bL(\sD_{\Gamma^*_0},\sN)$ is an isometry, from
\eqref{connect1} we have $MD_{\Gamma_0}=D_{A^*}M=B.$ Thus
\begin{equation}
\label{1step} ||D_{\Gamma_1}D_{\Gamma_0}f||^2=||P_{1,0}Bf||^2,\;
f\in\sM.
\end{equation}
By Theorem \ref{ITERATES11} the simple conservative system
\[
\begin{array}{l}
\tau^{(0)}_{1}=\left\{\begin{bmatrix}\Gamma_1&
D^{-1}_{\Gamma^*_{0}}(CA)\cr \left(D^{-1}_{\Gamma_{0}}
\left(B^*\uphar\sH_{1,0}\right)\right)^*&A_{1,0}\end{bmatrix};
\sD_{\Gamma_{0}},\sD_{\Gamma^*_{0}}, \sH_{1,0}\right\}
\end{array}
\]
has transfer function $\Theta_1$. Let
\[
B_1=\left(D^{-1}_{\Gamma_{0}}
\left(B^*\uphar\sH_{1,0}\right)\right)^*\in\bL(\sD_{\Gamma_0},\sH_{1,0}).
\]
Since the Schur parameters of $\Theta_1$ are
$\{\Gamma_1,\Gamma_2,\ldots\}$, starting from the system
$\tau^{(0)}_1$ and using the equality $(\sH_{1,0})_{1,0}=\sH_{2,0}$
(see \eqref{AKNM}), we obtain similarly to \eqref{1step} the
relation
\[
||D_{\Gamma_2}D_{\Gamma_1}\varphi||^2=||P_{2,0}B_1\varphi||^2,\;
\varphi\in\sD_{\Gamma_0}.
\]
Let us show that
\begin{equation}
\label{B1B} B_1D_{\Gamma_0}=P_{1,0}B.
\end{equation}
Actually for $\varphi\in\sM$ and $\psi\in \sH_{1,0}$ one has
\[
\begin{array}{l}
(B_1D_{\Gamma_0}\varphi,\psi)=(\left(D^{-1}_{\Gamma_{0}}
\left(B^*\uphar\sH_{1,0}\right)\right)^*D_{\Gamma_0}\varphi,\psi)=(D_{\Gamma_0}\varphi,D^{-1}_{\Gamma_{0}}
\left(B^*\uphar\sH_{1,0}\right)\psi)\\
=(\varphi,B^*\psi)=(B\varphi,\psi)=(P_{1,0}B\varphi,\psi).
\end{array}
\]
This proves \eqref{B1B}. Since $\sH_{2,0}\subseteq\sH_{1,0}$, we
have the equalities
\[
P_{2,0}B=P_{2,0}P_{1,0}B=P_{2,0}B_1D_{\Gamma_0},
\]
which lead to
\begin{equation}
\label{2step}
||D_{\Gamma_2}D_{\Gamma_1}D_{\Gamma_0}f||^2=||P_{2,0}Bf||^2,\;
f\in\sM.
\end{equation}
By induction, using the equality $(A_{n,0})_{1,0}=A_{n+1,0}$ (see
\eqref{UE1}), we obtain \eqref{proj1} and similarly \eqref{proj2}.
\end{proof}
Using \eqref{INntersec}, Theorem \ref{ParContr1}, and Corollary
\ref{iscois1}, we may interpret equalities \eqref{proj1} and
\eqref{proj2} as follows
\[
\inf\limits_{\{\f_k\}_{k=0}^{n-1}\subset\sN}\left\{\left\|Bh-\sum_{k=0}^{n-1}A^{*k}C^*\f_k\right\|^2
\right\}=\left\|D_{\Gamma_n}D_{\Gamma_{n-1}} \cdots
D_{\Gamma_0}h\right\|^2,\; h\in\sM,\; n\ge 1,
\]
\[
\inf\limits_{\{\psi_k\}_{k=0}^{n-1}\subset\sM}\left\{\left\|C^*f-\sum_{k=0}^{n-1}A^{k}B\psi_k\right\|^2
\right\}= \left\|D_{\Gamma^*_n}D_{\Gamma^*_{n-1}} \cdots
D_{\Gamma^*_0}f\right\|^2, \; f\in\sN,\; n\ge 1.
\]
\begin{corollary}
\label{Main11} Let $\Theta\in\bS(\sM,\sN)$ and let $\varphi_\Theta$
and $\psi_\Theta$ be the right and the left defect functions of
$\Theta,$ respectively. Then
\begin{equation}
\label{fact1} \varphi^*_\Theta(0)\varphi_\Theta(0)=
s-\lim\limits_{n\to\infty}\left(D_{\Gamma_0}D_{\Gamma_1}\cdots
D_{\Gamma_{n-1}} D^2_{\Gamma_n} D_{\Gamma_{n-1}}\cdots
D_{\Gamma_1}D_{\Gamma_0}\right),
\end{equation}
and
\begin{equation}
\label{fact2}
\psi_\Theta(0)\psi^*_\Theta(0)=s-\lim\limits_{n\to\infty}\left(D_{\Gamma^*_0}D_{\Gamma^*_1}\cdots
D_{\Gamma^*_{n-1}} D^2_{\Gamma^*_n} D_{\Gamma^*_{n-1}}\cdots
D_{\Gamma^*_1}D_{\Gamma^*_0}\right),
\end{equation}
where $\{\Gamma_0,\Gamma_1,\ldots\}$ are the Schur parameters of
$\Theta$.
\end{corollary}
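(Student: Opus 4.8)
The plan is to pass to the strong limit in the finite-step identity of Theorem~\ref{Main1}, after using the defect-function formula of Theorem~\ref{DBR} to identify the left-hand sides of \eqref{fact1} and \eqref{fact2}.

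First I would fix a simple conservative realization $\tau=\left\{\begin{bmatrix}D&C\cr B&A\end{bmatrix};\sM,\sN,\sH\right\}$ of $\Theta$, whose state-space operator $A$ is then completely non-unitary. By Theorem~\ref{DBR} one has $\varphi_\Theta(0)=P_\Omega B$ with $\Omega=(\sH^o_\tau)^\perp\ominus A(\sH^o_\tau)^\perp$, so that $\varphi^*_\Theta(0)\varphi_\Theta(0)=B^*P_\Omega B$. Using \eqref{perp} together with $\sH^o_\tau=\cspan\{A^{*n}D_A,\ n\in\dN_0\}$, I would identify $(\sH^o_\tau)^\perp=\bigcap_{n\ge1}\ker D_{A^n}$; write $\sH_\infty$ for this subspace, so that $\sH_\infty=\bigcap_{n\ge1}\sH_{n,0}$ and, since $A\ker D_{A^n}\subseteq\ker D_{A^{n-1}}$, also $A\sH_\infty\subseteq\sH_\infty$. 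Consequently $\Omega=\sH_\infty\ominus A\sH_\infty$ and $P_\Omega=P_{\sH_\infty}-P_{A\sH_\infty}$.

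Next I would recast Theorem~\ref{Main1}. Equality \eqref{proj1} reads $\|P_{n,0}Bh\|^2=\|D_{\Gamma_n}\cdots D_{\Gamma_0}h\|^2$ for $h\in\sM$, which is precisely
\[
B^*P_{n,0}B=D_{\Gamma_0}D_{\Gamma_1}\cdots D_{\Gamma_{n-1}}D^2_{\Gamma_n}D_{\Gamma_{n-1}}\cdots D_{\Gamma_1}D_{\Gamma_0}.
\]
Because $\{\sH_{n,0}\}$ is a decreasing chain of subspaces with intersection $\sH_\infty$, the projections $P_{n,0}$ converge strongly to $P_{\sH_\infty}$; hence the right-hand sides form a non-increasing sequence of nonnegative contractions and $s-\lim_{n\to\infty}B^*P_{n,0}B=B^*P_{\sH_\infty}B$. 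This already exhibits the strong limit on the right of \eqref{fact1} as $B^*P_{\sH_\infty}B$.

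The crux is to show $B^*P_{A\sH_\infty}B=0$, i.e.\ $\ran B\perp A\sH_\infty$, for then $B^*P_\Omega B=B^*P_{\sH_\infty}B$ and \eqref{fact1} follows. Here I would use the unitarity of the colligation: from $UU^*=I$ one gets $BB^*=I-AA^*=D^2_{A^*}$, whence $\cran B=\sD_{A^*}$ and $(\ran B)^\perp=\ker D_{A^*}$. On the other hand $\sH_\infty\subseteq\ker D_A$, so $D_Ay=0$ for $y\in\sH_\infty$, and the intertwining relation $AD_A=D_{A^*}A$ gives $D_{A^*}Ay=AD_Ay=0$; thus $A\sH_\infty\subseteq\ker D_{A^*}=(\ran B)^\perp$, as required. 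I expect this matching of the wandering subspace $\Omega$ (supplied by Theorem~\ref{DBR}) against the full intersection $\sH_\infty$ (supplied by Theorem~\ref{Main1}) to be the main obstacle. Combining the displays gives $\varphi^*_\Theta(0)\varphi_\Theta(0)=B^*P_\Omega B=B^*P_{\sH_\infty}B=s-\lim_{n\to\infty}B^*P_{n,0}B$, which is \eqref{fact1}. Finally, \eqref{fact2} follows by running the same argument on the adjoint colligation, with $A^*$, $C$, $\{\sH_{0,n}\}$ and $\Omega_*$ in place of $A$, $B$, $\{\sH_{n,0}\}$ and $\Omega$: one uses $\psi_\Theta(0)=CP_{\Omega_*}$, identity \eqref{proj2} in the form $CP_{0,n}C^*=D_{\Gamma^*_0}\cdots D^2_{\Gamma^*_n}\cdots D_{\Gamma^*_0}$, the relation $C^*C=I-A^*A=D^2_A$ (so that $(\ran C^*)^\perp=\ker D_A$), and the intertwining $A^*D_{A^*}=D_AA^*$ to get $A^*\bigl(\bigcap_{n\ge1}\ker D_{A^{*n}}\bigr)\subseteq\ker D_A=(\ran C^*)^\perp$.
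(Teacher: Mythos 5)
Your proposal is correct and takes essentially the same route as the paper's proof: both pass to the strong limit $P_{n,0}\to P_{\sH_\infty}$ in the identity of Theorem~\ref{Main1} and then use the unitarity of the colligation together with the intertwining $D_{A^*}A=AD_A$ on $\sH_\infty\subseteq\ker D_A$ to show that $P_{\sH_\infty}Bh$ already lies in the wandering subspace $\Omega$ furnished by Theorem~\ref{DBR}. Your orthogonality argument via $BB^*=D^2_{A^*}$, $(\ran B)^\perp=\ker D_{A^*}$, and $A\sH_\infty\subseteq\ker D_{A^*}$ is just an operator-level repackaging of the paper's vectorwise computation $(D_{A^*}Mh,Ax)=(Mh,AD_Ax)=0$ based on the factorization $B=D_{A^*}M$.
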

\begin{proof} Let $
\tau=\left\{\begin{bmatrix}D&C\cr B& A\end{bmatrix}; \mathfrak
M,\mathfrak N,\mathfrak H\right\}$
be a simple conservative system with transfer function $\Theta$.
Since
\[
\sH_{1,0}\supseteq\sH_{2,0}\supseteq\cdots\sH_{n,0}\supseteq\cdots,
\]
the sequence of orthogonal projections $\{P_{n,0}\}$ strongly
converges to the orthogonal projection $P_{\sH_0}$, where
\[
\sH_0:=\bigcap\limits_{n\ge 1}\sH_{n,0}=(\sH^0_\tau)^\perp.
\]
Therefore
\[
P_{\sH_0}Bh=\lim\limits_{n\to\infty}P_{n,0}Bh,\; h\in\sM.
\]
From \eqref{proj1} it follows
\begin{equation}
\label{limproj1}
||P_{\sH_0}Bh||^2=\lim\limits_{n\to\infty}\left\|D_{\Gamma_n}D_{\Gamma_{n-1}}
\cdots D_{\Gamma_0}h\right\|^2,\; h\in\sM.
\end{equation}
The operator $A\uphar\sH_0$ is a unilateral shift, therefore
$D_Ax=0$ for all $x\in\sH_0$. Since the operator
\[
U=\begin{bmatrix}D&C\cr B& A\end{bmatrix}
\]
is unitary, the operator $B$ is of the form $B=D_{A^*}M$, where
$M^*\in\bL(\sD_{A^*},\sM)$ is isometry. Hence for $h\in\sM $ and
$x\in\sH_0$ one obtains
\[
(P_{\sH_0}Bh, Ax)=(D_{A^*}Mh,Ax)=(Mh, D_{A^*}Ax)=(Mh, AD_Ax)=0.
\]
Thus,
\[
P_{\sH_0}Bh=P_\Omega Bh,\; h\in\sM,
\]
where $\Omega=\sH_0\ominus A\sH_0$. Theorem \ref{DBR} yields that
$P_\Omega Bh=\varphi_\Theta(0)h$ and since the sequence of operators
\[
\left\{D_{\Gamma_0}D_{\Gamma_1}\cdots D_{\Gamma_{n-1}}
D^2_{\Gamma_n} D_{\Gamma_{n-1}}\cdots
D_{\Gamma_1}D_{\Gamma_0}\right\}_{n=0}^\infty
\]
in non-increasing, we obtain \eqref{fact1}, and similarly
\eqref{fact2}.
\end{proof}
Using equalities \eqref{Vazhnrav1}, \eqref{Vazhnrav2},
\eqref{fact1}, and \eqref{fact2} we arrive at the next two
corollaries.
\begin{corollary}
 \label{Main3} 1) The following conditions are equivalent
 \begin{enumerate}
 \def\labelenumi{\rm (\roman{enumi})}
 \item
 the system $\tau$ is observable,
 \item
\begin{equation}
\label{observ}
s-\lim\limits_{n\to\infty}\left(D_{\Gamma_0}D_{\Gamma_1}\cdots
D_{\Gamma_{n-1}} D^2_{\Gamma_n} D_{\Gamma_{n-1}}\cdots
D_{\Gamma_1}D_{\Gamma_0}\right)=0,
\end{equation}
\item $\left(D^2_{T_\Theta}\right)_\sM=0.$
\end{enumerate}

2)  The following conditions are equivalent
 \begin{enumerate}
 \def\labelenumi{\rm (\roman{enumi})}
 \item the system $\tau$ is controllable,
 \item
\begin{equation}
\label{control}
s-\lim\limits_{n\to\infty}\left(D_{\Gamma^*_0}D_{\Gamma^*_1}\cdots
D_{\Gamma^*_{n-1}} D^2_{\Gamma^*_n} D_{\Gamma^*_{n-1}}\cdots
D_{\Gamma^*_1}D_{\Gamma^*_0}\right)=0.
\end{equation}
\item $\left(D^2_{T_{\wt\Theta}}\right)_\sN=0.$
\end{enumerate}
\end{corollary}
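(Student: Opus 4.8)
The plan is to prove each three-term chain by identifying the strong limit in (ii) with two different objects: with the Kre\u\i n shorted operator of (iii) via Theorem \ref{Main5}, and with the value at the origin of the defect function via Corollary \ref{Main11}; observability and controllability are then supplied by Theorem \ref{DBR}. I would carry out part 1) in full and obtain part 2) by applying part 1) to the adjoint system
\[
\tau^*=\left\{\begin{bmatrix}D^*&B^*\cr C^*&A^*\end{bmatrix};\sN,\sM,\sH\right\},
\]
which is again simple and conservative, has transfer function $\wt\Theta$ with Schur parameters $\{\Gamma^*_n\}$, and whose observable subspace is $\sH^o_{\tau^*}=\cspan\{A^nB\sM\}=\sH^c_\tau$; hence $\tau^*$ is observable if and only if $\tau$ is controllable, and the conclusions of part 1) for $\tau^*$ are exactly those of part 2) for $\tau$.

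First I would settle (ii)$\iff$(iii). Writing $W:=s-\lim\limits_{n\to\infty}D_{\Gamma_0}\cdots D^2_{\Gamma_n}\cdots D_{\Gamma_0}$, a nonnegative operator on $\sM$, formula \eqref{Vazhnrav1} reads
\[
\left(D^2_{T_\Theta}\right)_\sM=WP_\sM .
\]
Since $P_\sM$ is the orthogonal projection of $l_2(\sM)$ onto its first summand $\sM$ and $W$ acts on that summand, $WP_\sM=0$ if and only if $W=0$; thus $\left(D^2_{T_\Theta}\right)_\sM=0$ precisely when the strong limit in (ii) vanishes.

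Next I would prove (i)$\iff$(ii). By \eqref{fact1} the same strong limit equals $\varphi^*_\Theta(0)\varphi_\Theta(0)$, so (ii) is equivalent to $\varphi_\Theta(0)=0$, where $\varphi_\Theta\in{\bf S}(\sM,\sK)$ is the right defect function, an outer function by Theorem \ref{Fact}. The crucial step is that an outer function vanishing at the origin vanishes identically: if $\varphi_\Theta(0)=0$ then every $g=\varphi_\Theta f$ with $f\in H^2(\sM)$ satisfies $g(0)=\varphi_\Theta(0)f(0)=0$, so the closure of $\varphi_\Theta H^2(\sM)$ lies in the proper subspace $\{\,g\in H^2(\sK):g(0)=0\,\}$ whenever $\sK\ne\{0\}$; outerness, i.e.\ density of $\varphi_\Theta H^2(\sM)$ in $H^2(\sK)$, then forces $\sK=\{0\}$ and $\varphi_\Theta\equiv0$. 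Hence (ii) is equivalent to $\varphi_\Theta\equiv0$, which by Theorem \ref{DBR}(2) is equivalent to observability of $\tau$. Combined with the previous paragraph this yields (i)$\iff$(ii)$\iff$(iii).

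The routine part is the formal chaining of \eqref{Vazhnrav1}, \eqref{fact1}, and Theorem \ref{DBR}; the one genuinely nonformal point, which I expect to be the main (if short) obstacle, is the implication (ii)$\Rightarrow$(i). There condition (ii) delivers only the vanishing of the single operator $\varphi^*_\Theta(0)\varphi_\Theta(0)$, i.e.\ of the first Taylor coefficient of $\varphi_\Theta$, and this must be upgraded to $\varphi_\Theta\equiv0$ by the outerness argument above. Part 2) is handled either by the adjoint-system reduction already described, or symmetrically, replacing \eqref{Vazhnrav1}, \eqref{fact1} by \eqref{Vazhnrav2}, \eqref{fact2} and using that $\psi^*_\Theta$ (the adjoint of the left defect function) is outer, so that $\psi_\Theta(0)\psi^*_\Theta(0)=0$ again forces $\psi_\Theta\equiv0$ and hence controllability via Theorem \ref{DBR}(2).
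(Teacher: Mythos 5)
Your proof is correct and follows essentially the same route as the paper, which obtains the corollary precisely by combining \eqref{Vazhnrav1}, \eqref{Vazhnrav2} with \eqref{fact1}, \eqref{fact2} and Theorem \ref{DBR}(2). Your one addition is to make explicit the step the paper leaves implicit --- that the outer functions $\varphi_\Theta$, $\psi_\Theta$ of Theorem \ref{Fact} vanishing at the origin must vanish identically, since $\varphi_\Theta H^2(\sM)$ would otherwise lie in the proper closed subspace $\{g:g(0)=0\}$ --- and that argument, like your adjoint-system reduction for part 2), is sound.
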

\begin{corollary}
\label{Mainn11} Let $A$ be a completely non-unitary contraction in
the Hilbert space $\sH$, let
\[
\Psi_A(\lambda)=\left(-A+\lambda D_{A^*}(I-\lambda
A^*)^{-1}D_A\right)\uphar\sD_A,\; \lambda\in\dD
\]
be the Sz.-Nagy--Foias characteristic function of $A$ \cite{SF}, and
let  $\{\gamma_n\}_{n\ge 0}$ be the Schur parameters of $\Psi_A$.

1) The following conditions are equivalent
\begin{enumerate}
 \def\labelenumi{\rm (\roman{enumi})}
\item $A$ is completely non-isometric,
\item
$s-\lim\limits_{n\to\infty}\left(D_{\gamma^*_0}D_{\gamma^*_1}\cdots
D_{\gamma^*_{n-1}} D^2_{\gamma^*_n} D_{\gamma^*_{n-1}}\cdots
D_{\gamma^*_1}D_{\gamma^*_0}\right)=0,$
\item $\left(D^2_{T_{\Psi_A}}\right)_{\sD_{A}}=0$.
\end{enumerate}

2) The following conditions are equivalent
\begin{enumerate}
 \def\labelenumi{\rm (\roman{enumi})}
\item $A$ is completely non-co-isometric,
\item
$s-\lim\limits_{n\to\infty}\left(D_{\gamma_0}D_{\gamma_1}\cdots
D_{\gamma_{n-1}} D^2_{\gamma_n} D_{\gamma_{n-1}}\cdots
D_{\gamma_1}D_{\gamma_0}\right)=0,$
\item $\left(D^2_{T_{\Psi_{A^*}}}\right)_{\sD_{A^*}}=0$.
\end{enumerate}
\end{corollary}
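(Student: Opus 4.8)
The plan is to obtain each of the three-way equivalences from Corollary \ref{Main3}, after realizing $\Psi_A$ by an explicit simple conservative system whose controllable and observable subspaces I can match to the kernel intersections of \eqref{perp}. Concretely, I would take the elementary rotation of $A$,
\[
\begin{bmatrix}-A\uphar\sD_A & D_{A^*}\cr D_A\uphar\sD_A & A^*\end{bmatrix}:
\begin{array}{l}\sD_A\\\oplus\\\sH\end{array}\to
\begin{array}{l}\sD_{A^*}\\\oplus\\\sH\end{array},
\]
which is unitary (one checks $U^*U=I$ and $UU^*=I$ directly from the intertwining relations $AD_A=D_{A^*}A$ and $A^*D_{A^*}=D_AA^*$) and whose transfer function $-A\uphar\sD_A+\lambda D_{A^*}(I-\lambda A^*)^{-1}D_A\uphar\sD_A$ is exactly $\Psi_A$. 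This gives a conservative system $\tau$ with input space $\sD_A$, output space $\sD_{A^*}$, and state operator $A^*$; since $A$, and hence $A^*$, is completely non-unitary, $\tau$ is simple, and its Schur parameters are the $\{\gamma_n\}$ of $\Psi_A$.

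Next I would compute the controllable and observable subspaces of $\tau$. Using the defining formulas together with $\ran(D_A\uphar\sD_A)=\ran D_A$, one finds
\[
\sH^c_\tau=\cspan\{A^{*n}D_A\sH:\,n\in\dN_0\},\qquad
\sH^o_\tau=\cspan\{A^{n}D_{A^*}\sH:\,n\in\dN_0\}.
\]
By \eqref{perp} these exhaust $\sH$ exactly when $\bigcap_{n\ge1}\ker D_{A^n}=\{0\}$ and $\bigcap_{n\ge1}\ker D_{A^{*n}}=\{0\}$, respectively, and by \eqref{SHTCOSHT} the first equality means that $A$ is completely non-isometric, while the second means that $A$ is completely non-co-isometric. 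Hence $\tau$ is controllable iff $A$ is c.n.i., and observable iff $A$ is c.n.c.-i.

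Then I would read off the result from Corollary \ref{Main3}. Its controllability part says that controllability of $\tau$ is equivalent both to the vanishing of the strong limit appearing in (ii) and to $\left(D^2_{T_{\wt{\Psi_A}}}\right)_{\sD_{A^*}}=0$; together with the subspace identification above this yields the equivalence of (i) with (ii) and with the shorted-operator condition, the latter rewritten through the identity $\wt{\Psi_A}=\Psi_{A^*}$ (a short adjoint computation using $A\sD_A\subseteq\sD_{A^*}$), so that $T_{\wt{\Psi_A}}=T_{\Psi_{A^*}}$. Part~2 is then obtained by applying Part~1 to the contraction $A^*$, since $A$ is c.n.c.-i.\ iff $A^*$ is c.n.i.\ and the Schur parameters of $\Psi_{A^*}=\wt{\Psi_A}$ are $\{\gamma^*_n\}$.

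The main obstacle is bookkeeping rather than analysis: I must keep straight that the realization of $\Psi_A$ has state operator $A^*$, because this is what forces the c.n.i.\ property to be detected by controllability, hence by the $\gamma^*$-products and by the shorted operator attached to $\Psi_{A^*}=\wt{\Psi_A}$, whereas the c.n.c.-i.\ property is detected by observability, hence by the $\gamma$-products and the shorted operator of $\Psi_A$. Fixing this $A$-versus-$A^*$ pairing correctly, together with the identity $\wt{\Psi_A}=\Psi_{A^*}$, is precisely what links (ii) and (iii) to (i) in each part.
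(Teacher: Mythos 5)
Your proposal is, in substance, the paper's own proof: the paper realizes $\Psi_A$ as the transfer function of the simple conservative system
$\Sigma=\left\{\begin{bmatrix}-A& D_{A^*}\cr D_{A}& A^*\end{bmatrix};\sD_{A},\sD_{A^*},\sH\right\}$
and then says only that the statements follow from Corollary \ref{Main3}. You have supplied exactly the bookkeeping that this one-line proof suppresses: unitarity of the Julia colligation, simplicity of $\Sigma$ via complete non-unitarity of the state operator $A^*$, the identifications $\sH^c_\Sigma=\cspan\{A^{*n}D_A\sH\}$ and $\sH^o_\Sigma=\cspan\{A^{n}D_{A^*}\sH\}$ matched against \eqref{perp} and \eqref{SHTCOSHT}, and the identity $\wt{\Psi_A}=\Psi_{A^*}$. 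All of these steps are correct.

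One point deserves emphasis: your careful tracking produces (iii)-conditions interchanged relative to the printed corollary, and yours is the correct pairing. You establish that $A$ is c.n.i.\ $\iff$ $\Sigma$ is controllable $\iff$ the $\gamma^*$-products tend strongly to zero $\iff$ $\left(D^2_{T_{\Psi_{A^*}}}\right)_{\sD_{A^*}}=0$, and that $A$ is c.n.c.-i.\ $\iff$ $\Sigma$ is observable $\iff$ the $\gamma$-products tend strongly to zero $\iff$ $\left(D^2_{T_{\Psi_A}}\right)_{\sD_A}=0$, whereas the statement as printed attaches $\left(D^2_{T_{\Psi_A}}\right)_{\sD_A}=0$ to part 1 and $\left(D^2_{T_{\Psi_{A^*}}}\right)_{\sD_{A^*}}=0$ to part 2. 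That the printed pairing cannot stand is seen already for $A$ the unilateral shift: there $\sD_A=\{0\}$, so $\left(D^2_{T_{\Psi_A}}\right)_{\sD_A}=0$ trivially although $A$, being isometric, is not c.n.i.; while $\Psi_{A^*}=\wt{\Psi_A}$ is the zero function of $\bS(\sD_{A^*},\{0\})$, so that $T_{\Psi_{A^*}}=0$ and $\left(D^2_{T_{\Psi_{A^*}}}\right)_{\sD_{A^*}}=P_{\sD_{A^*}}\ne 0$, even though $A$ is c.n.c.-i. So the (ii)-conditions are paired correctly in the paper, the printed (iii)'s are a misprint, and your argument proves the corrected statement. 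The only criticism is presentational: you write as if you were verifying the corollary verbatim, and you should state explicitly that your conclusion differs from the printed (iii)'s by this swap rather than leaving the reader to notice it in your final paragraph; apart from this, you follow the paper's route exactly.
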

\begin{proof} The function $\Psi_A$ is the transfer function of the simple conservative system
\[
\Sigma=\left\{\begin{bmatrix}-A& D_{A^*}\cr D_{A}& A^*\end{bmatrix};
 \sD_{A},\sD_{A^*},\sH\right\}.
\]
Now statements follow from Corollary \ref{Main3}.
\end{proof}

Let us make a few remarks. If $\mu$ is a nontrivial scalar
probability measure on the unit circle $\dT=\{\xi\in \dC: |\xi|=1\}$
( $\mu$ is not supported on a finite set), then with $\mu$ are
associated the monic polynomials $\Phi_n(z,\mu)$ (or $\Phi_n$ if
$\mu$ is understood) orthogonal in the Hilbert space $L^2(\dT,
d\mu)$,
 connected by the Szeg\H{o} recurrence relations
 \begin{equation}\label{szego}
\Phi_{n+1}(z)=z\Phi_n(z)-\bar\alpha_n(\mu)\Phi_n^*(z)
\end{equation}
 with some
complex numbers $\alpha_n(\mu)$, called the {\it Verblunsky
coefficients} \cite{S2}. By definition
\[
\Phi(z)=\sum_{j=0}^n p_jz^j
\Rightarrow \Phi^*(z)=\sum_{j=0}^n \bar p_{n-j}z^j.
\]
 The norm of the polynomials $\Phi_n$ in $L^2(\dT,d\mu)$ can be
computed by:
\[
||\Phi_n||^2=\prod\limits_{j=0}^{n-1}(1-|\alpha_j(\mu)|^2),\qquad n=1,2,\ldots.
\]
A result of Szeg\H{o} --
Kolmogorov -- Krein reads that
\[
\prod\limits_{j=0}^{\infty}(1-|\alpha_j(\mu)|^2)=\exp\left(\frac{1}{2\pi}\int_0^{2\pi}\ln \mu'(t)dt\right),
\]
where $\mu'$ is
the Radon -- Nikodym derivative of $\mu$ with respect to Lebesgue measure $dm$.


Define the {\it
Carath\'eodory function} by
\[
F(z)= F(z,\mu):=\int_{\dT}\frac{\zeta+z}{\zeta-z}\,d\mu(\zeta).
\]
 $F$ is an analytic function
in $\dD$ which obeys $\RE F>0$, $F(0)=1$. The {\it Schur function}
is then defined by
\[
f(z)=
f(z,\mu):=\frac{1}{z}\,\frac{F(z)-1}{F(z)+1}, \qquad F(z)=\frac{1+z
f(z)}{1-z f(z)}\,,
\]
so it is an analytic function in $\dD$ with
$\sup_{\dD}|f(z)|\leq1$. A one-to-one correspondence can be easily
set up between the three classes (probability measures,
Carath\'eodory and Schur functions). Under this correspondence $\mu$
is trivial, that is, supported on a finite set, if and only if the
associate Schur function is a finite Blaschke product.
Let $\{\gamma_n(f)\}$ be the Schur parameters of $f$. According to Geronimus theorem the equalities
 $\gamma_n(f)=\alpha_n(\mu)$ hold for all $n\ge 0$.

 If a Schur function $f$ is not a finite Blaschke product, the
connection between the non-tangential limit values $f(\zeta)$
 and its Schur parameters $\{\gamma_n\}$ is given by the
formula (see \cite {Boy})
\[
\prod\limits_{n=0}^\infty
(1-|\gamma_n|^2)=\exp\left\{\int_\dT\ln(1-|f(\zeta)|^2)dm\right\}.
\]

Thus, (cf. \cite[Theorem
1.5.7]{S2}):
for any nontrivial probability measure $\mu$ on the
unit circle, the following are equivalent:
\begin{enumerate}
\def\labelenumi{\rm (\roman{enumi})}
\item $\lim\limits_{n\to\infty}||\Phi_n||=0;$
\item
$\prod\limits_{j=0}^{\infty}(1-|\alpha_j(\mu)|^2)=0;$
\item the system
$\{\phi_n=\Phi_n/\|\Phi_n\|\}_{n=0}^\infty$ is the orthonormal basis in $L^2(\dT,
d\mu)$,
\item $\ln \mu'\not\in L^1(\dT)$,
\item $\ln(1-|f(\xi)|^2)\not\in L^1(\dT)$,
\item a simple conservative system with transfer function $f$ is controllable and observable.
\end{enumerate}

In the case, when $f\in\bS(\sM,\sM)$ and the norms of all Schur parameters $\{\Gamma_n\}_{n\ge 0}$ of $f$ are less than 1,
in \cite[Corollary 4.8]{BC} is mentioned that
\[
s-\lim\limits_{n\to\infty}\left(D_{\Gamma^*_0}D_{\Gamma^*_1}\cdots
D_{\Gamma^*_{n-1}} D^2_{\Gamma^*_n} D_{\Gamma^*_{n-1}}\cdots
D_{\Gamma^*_1}D_{\Gamma^*_0}\right)=G_\mu^*(0)G_\mu(0),
\]
where $G_\mu$ is the spectral factor of the operator-valued measure $\mu$ from the integral representation
\[
F(z)=\int_{\dT}\frac{\zeta+z}{\zeta-z}\,d\mu(\zeta)
\]
of the function $F(z)=(1+zf(z))(1-z f(z))^{-1}.$

Analogs of formulas \eqref{Vazhno2} and \eqref{Vazhnrav2} have been
established by G.~Popescu in \cite{Pop2001} for a positive definite
multi-Toeplitz kernel and corresponding generalized Schur
parameters.

\subsection{Central solution to the Schur problem}
 Now we return to the Schur problem (see Subsection \ref{SchPr}) with data
$\{C_k\}_{k=0}^N\subset\bL(\sM,\sN)$. The necessary and sufficient
condition of solvability is the contractiveness of the operator
\[
T_N=\begin{bmatrix}C_0&0&0&\ldots&0\cr
C_1&C_0&0&\ldots&0\cr \vdots&\vdots&\vdots&\vdots&\vdots\cr
C_N&C_{N-1}&C_{N-2}&\ldots&C_0\end{bmatrix}.
\]
Suppose that $T_N$ is a contraction and let
$$\Gamma_0=C_0,\;\Gamma_k\in\bL(\sD_{\Gamma_{k-1}},\sD_{\Gamma^*_{k-1}}),\; k=1,\ldots,N$$
be the choice sequence determined by the contractive operator $T_N$.
Notice that equalities \eqref{Vazhno1} and \eqref{Vazhno2} remain
true. Moreover, it follows from \eqref{Vazhno1}, \eqref{Vazhno2}
that
 the sequence of operators $\{(D^2_{T_k})_\sM\uphar \sM\}_{k=0}^N$ and $\{(D^2_{\wt T_k})_\sN\uphar \sN\}_{k=0}^N$
are non-increasing. Below this fact we establish directly.
\begin{proposition}
\label{unique1} Let $C_0, C_1,\ldots, C_N\in\bL(\sM,\sN)$ be the
Schur sequence and let $T_N$ be given by \eqref{trToe}.  Then for
$$
T_k=T_k(C_0,C_1,\ldots, C_k),\;\wt T_k=T_k(C^*_0,C^*_1,\ldots,
C^*_k),\; k=0,1,\ldots,N
$$
 the inequalities
\[
\left(D^2_{T_{k}}\right)_\sM\uphar\sM\ge\left(D^2_{T_{k+1}}\right)_\sM\uphar\sM,\; k=0,\ldots, N-1,
\]
\[
\left(D^2_{\wt T_{k}}\right)_\sN\uphar\sN\ge\left(D^2_{\wt T_{k+1}}\right)_\sN\uphar\sN,\; k=0,\ldots, N-1,
\]
hold.
\end{proposition}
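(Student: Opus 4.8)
The plan is to realize $T_k$ as the top-left corner of $T_{k+1}$ and then combine two facts about Kre\u{\i}n's shorted operator already in hand: the monotonicity of shorting (Proposition \ref{short}(2)) and the inequality $(PSP)_\cK\ge S_\cK$ of Proposition \ref{new1}. This avoids the explicit Schur-parameter formulas \eqref{Vazhno1}, \eqref{Vazhno2} entirely, giving the promised direct argument.

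First I would split off the \emph{last} coordinate, writing $\sM^{k+2}=\sM^{k+1}\oplus\sM$ and $\sN^{k+2}=\sN^{k+1}\oplus\sN$, so that
\[
T_{k+1}=\begin{bmatrix} T_k & 0\cr \gamma & C_0\end{bmatrix},\qquad \gamma=\begin{bmatrix} C_{k+1}& C_k&\cdots& C_1\end{bmatrix}\in\bL(\sM^{k+1},\sN).
\]
The point of splitting off the last (and not the first) coordinate is that the distinguished subspace $\sM$—the first coordinate, onto which we short—then sits inside $\sM^{k+1}=\ran P$, where $P$ denotes the orthogonal projection of $\sM^{k+2}$ onto $\sM^{k+1}$. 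A single block multiplication gives the $(1,1)$-block of $T_{k+1}^*T_{k+1}$ as $T_k^*T_k+\gamma^*\gamma$, whence
\[
P\,D^2_{T_{k+1}}\,P\uphar\sM^{k+1}=D^2_{T_k}-\gamma^*\gamma\le D^2_{T_k},
\]
and this compression is nonnegative because $D^2_{T_{k+1}}\ge 0$.

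Next I would assemble the chain of inequalities. Since $\sM\subseteq\ran P$, Proposition \ref{new1} yields $\bigl(P\,D^2_{T_{k+1}}\,P\bigr)_\sM\ge\bigl(D^2_{T_{k+1}}\bigr)_\sM$, the shorting being performed in $\sM^{k+2}$. As $P\,D^2_{T_{k+1}}\,P$ is supported on $\sM^{k+1}$ and vanishes on the last coordinate, the $\Omega$-description of the shorted operator (exactly as in Proposition \ref{vspom}) shows that shorting it onto $\sM$ agrees, on $\sM$, with shorting $D^2_{T_k}-\gamma^*\gamma$ onto $\sM$ inside $\sM^{k+1}$. Finally $0\le D^2_{T_k}-\gamma^*\gamma\le D^2_{T_k}$ together with Proposition \ref{short}(2) gives $\bigl(D^2_{T_k}-\gamma^*\gamma\bigr)_\sM\le\bigl(D^2_{T_k}\bigr)_\sM$. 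Concatenating,
\[
\bigl(D^2_{T_{k+1}}\bigr)_\sM\uphar\sM\le\bigl(P\,D^2_{T_{k+1}}\,P\bigr)_\sM\uphar\sM=\bigl(D^2_{T_k}-\gamma^*\gamma\bigr)_\sM\uphar\sM\le\bigl(D^2_{T_k}\bigr)_\sM\uphar\sM,
\]
which is the first inequality. The second follows verbatim upon replacing $C_0,\dots,C_{k+1}$ by $C_0^*,\dots,C_{k+1}^*$ and interchanging $\sM$ and $\sN$.

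I expect the only genuine obstacle to be the bookkeeping of ambient spaces. One must split off the last coordinate so that $\sM$ remains the first coordinate in passing from $T_k$ to $T_{k+1}$; one must then keep straight that Proposition \ref{new1} is invoked in $\sM^{k+2}$ whereas the monotonicity step lives in $\sM^{k+1}$. Reconciling the two—through the block-diagonal, zero-on-the-last-coordinate form of $P\,D^2_{T_{k+1}}\,P$—is the single point requiring care; the rest is routine block algebra.
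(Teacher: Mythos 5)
Your proof is correct and takes essentially the same route as the paper's: the identical splitting $T_{k+1}=\begin{bmatrix}T_k&0\cr B_k&C_0\end{bmatrix}$ off the last coordinate, the identification of the compression $P\,D^2_{T_{k+1}}\,P\uphar\sM^{k+1}$ with $D^2_{T_k}-B_k^*B_k\le D^2_{T_k}$, and the same concatenation of Proposition \ref{new1} with Proposition \ref{short}(2). Your explicit reconciliation of the two ambient spaces via the block-diagonal (Proposition \ref{vspom}-type) reduction merely spells out a step the paper leaves implicit, and your $\gamma=\begin{bmatrix}C_{k+1}&C_k&\cdots&C_1\end{bmatrix}$ even fixes a small index typo in the paper's $B_k$.
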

\begin{proof} It is sufficient to prove that $\left(D^2_{T_{k}}\right)_\sM\uphar\sM\ge\left(D^2_{T_{k+1}}\right)_\sM\uphar\sM,$ for
$ k=0,\ldots, N-1.$
The operator $T_{k+1}$ can be represented as follows
\[
T_{k+1}=\begin{bmatrix}T_k&0\cr B_k& C_0 \end{bmatrix},
\]
where $B_k=\begin{bmatrix}C_{k+1}&C_{k+2}&\ldots&C_1\end{bmatrix}$. It follows that
\[
D^2_{T_{k+1}}=\begin{bmatrix}D^2_{T_k}-B^*_kB_k&-B^*_kC_0\cr -C^*_0B_k&D^2_{C_0} \end{bmatrix}.
\]
Hence,
\[
D^2_{T_k}\ge D^2_{T_k}-B^*_kB_k=P_{\sM^{k+1}}D^2_{T_{k+1}}\uphar\sM^{k+1},
\]
and from Propositions \ref{short}, \ref{new1}
\[
\left(D^2_{T_k}\right)_\sM\ge \left(D^2_{T_k}-B^*_kB_k\right)_\sM=\left(P_{\sM^{k+1}}D^2_{T_{k+1}}\uphar{\sM^{k+1}}\right)_\sM\ge
 \left(D^2_{T_{k+1}} \right)_\sM\uphar\sM^{k+1}.
\]
Hence,
\[
\left(D^2_{T_k}\right)_\sM\uphar\sM\ge \left(D^2_{T_{k+1}}\right)_\sM\uphar\sM.
\]
\end{proof}
\begin{corollary}
\label{zero} Under conditions of Proposition \ref{unique1} the
equality $$\left(D^2_{T_k}\right)_\sM=0$$
 for some $k\le N-1$ implies
\[
\left(D^2_{T_{k+1}}\right)_\sM=0,\;\left(D^2_{T_{k+2}}\right)_\sM=0,\cdots,\left(D^2_{T_N}\right)_\sM=0.
\]
\end{corollary}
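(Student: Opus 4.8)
The plan is to deduce the corollary directly from the monotonicity already established in Proposition \ref{unique1}, combined with the structural fact that a Kre\u\i n shorted operator is completely determined by its compression to the shorting subspace. No new estimate is needed; the work is in aligning the ambient spaces correctly.

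First I would record the elementary but crucial observation that for any nonnegative bounded $S$ and any subspace $\cK$ the shorted operator satisfies $\ker S_\cK\supseteq\cK^\perp$ (noted just after the definition of $S_\cK$) and $\ran S_\cK\subseteq\cK$ (which follows from \eqref{rangeSh}). Hence $S_\cK$ is entirely governed by its restriction $S_\cK\uphar\cK$, and in particular
\[
S_\cK=0\iff S_\cK\uphar\cK=0.
\]
Taking $\cK=\sM$, realized as the distinguished first copy of $\sM$ inside $\sM^{k+1}$, this gives for each $k$ the equivalence
\[
\left(D^2_{T_k}\right)_\sM=0\iff\left(D^2_{T_k}\right)_\sM\uphar\sM=0.
\]

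Next I would invoke Proposition \ref{unique1}, which supplies the chain of inequalities
\[
\left(D^2_{T_k}\right)_\sM\uphar\sM\ge\left(D^2_{T_{k+1}}\right)_\sM\uphar\sM\ge 0
\]
between nonnegative operators on $\sM$. Assuming $\left(D^2_{T_k}\right)_\sM=0$ for some $k\le N-1$, the equivalence above yields $\left(D^2_{T_k}\right)_\sM\uphar\sM=0$, and the displayed inequality then squeezes $0\ge\left(D^2_{T_{k+1}}\right)_\sM\uphar\sM\ge 0$, so that $\left(D^2_{T_{k+1}}\right)_\sM\uphar\sM=0$ and therefore $\left(D^2_{T_{k+1}}\right)_\sM=0$. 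Iterating this single step from index $k$ up through $N$ produces $\left(D^2_{T_{k+1}}\right)_\sM=\cdots=\left(D^2_{T_N}\right)_\sM=0$, which is exactly the claim.

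I expect essentially no genuine obstacle here beyond bookkeeping. The one point demanding care is that $\left(D^2_{T_k}\right)_\sM$ and $\left(D^2_{T_{k+1}}\right)_\sM$ are operators on the \emph{different} ambient spaces $\sM^{k+1}$ and $\sM^{k+2}$, so their comparison is meaningful only after restricting both to the common subspace $\sM$. The reduction to that restriction, guaranteed by $\ker S_\cK\supseteq\cK^\perp$ and $\ran S_\cK\subseteq\cK$, is precisely what lets the monotonicity of Proposition \ref{unique1} be applied verbatim and closes the argument.
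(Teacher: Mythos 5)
Your proposal is correct and matches the paper's intent exactly: the paper states Corollary \ref{zero} as an immediate consequence of Proposition \ref{unique1}, and your argument simply fills in the routine details --- namely that $S_\cK$ vanishes iff $S_\cK\uphar\cK$ does (since $\ker S_\cK\supseteq\cK^\perp$ and $S_\cK\ge 0$), so the chain $\left(D^2_{T_k}\right)_\sM\uphar\sM\ge\left(D^2_{T_{k+1}}\right)_\sM\uphar\sM\ge 0$ propagates the vanishing from index $k$ up to $N$. Your attention to the differing ambient spaces $\sM^{k+1}$ and $\sM^{k+2}$ is exactly the right bookkeeping point, and nothing further is needed.
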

\begin{corollary}
\label{detzero}
Additionally to conditions of Proposition \ref{unique1} suppose that $\sM$ and $\sN$ are one-dimensional.
Then the following conditions are equivalent:
\begin{enumerate}
\def\labelenumi{\rm (\roman{enumi})}
\item $\det D^2_{T_N}=0,$
\item $\left(D^2_{T_N}\right)_\sM=0.$
\end{enumerate}
\end{corollary}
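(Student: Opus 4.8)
The plan is to combine the specialization of formula \eqref{Vazhno1} to the scalar setting with the uniqueness criterion of Theorem \ref{SchPr1}. First I would exploit the hypothesis $\dim\sM=\dim\sN=1$. Under it every choice parameter $\Gamma_k$ determined by $T_N$ is a complex number, and each defect operator $D_{\Gamma_k}$ reduces to multiplication by the nonnegative scalar $(1-|\Gamma_k|^2)^{1/2}$ on the corresponding (at most one-dimensional) defect space. Since in Proposition \ref{unique1} it is noted that \eqref{Vazhno1} remains valid in the Schur-problem setting, taking $n=N$ there and letting all the scalar factors commute, the right-hand side collapses to
\[
\left(D^2_{T_N}\right)_\sM=\left(\prod_{k=0}^N\bigl(1-|\Gamma_k|^2\bigr)\right)P_\sM .
\]
Because $P_\sM\neq0$, this yields the equivalence
\[
\left(D^2_{T_N}\right)_\sM=0\iff |\Gamma_r|=1\ \text{ for some }0\le r\le N .
\]

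Next I would invoke Theorem \ref{SchPr1}, applied to the scalar Schur sequence $\{C_k\}_{k=0}^N$. The equivalence of its items (ii) and (iv) asserts precisely that there exists $r$, $0\le r\le N$, with $|\Gamma_r|=1$ if and only if $\det D^2_{T_N}=0$. Combining this with the displayed equivalence above gives at once
\[
\det D^2_{T_N}=0\iff\left(D^2_{T_N}\right)_\sM=0 ,
\]
which is the assertion of the corollary.

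The only point requiring care is the bookkeeping of the termination of the choice sequence: if $|\Gamma_r|=1$ for some $r<N$, then the defect spaces $\sD_{\Gamma_r}$ and $\sD_{\Gamma^*_r}$ degenerate to $\{0\}$ and the remaining parameters $\Gamma_{r+1},\dots,\Gamma_N$ act on trivial spaces. One must check that \eqref{Vazhno1} still returns $0$ in this regime, which it does, since the factor $D_{\Gamma_r}$ (or $D^2_{\Gamma_N}$ when $r=N$) already annihilates the whole product; this is a matter of convention rather than a genuine obstacle. As an independent check on one implication, note that if $\det D^2_{T_N}\neq0$ then $D^2_{T_N}$ is invertible, so $\ran\left(D^2_{T_N}\right)^{1/2}=\sM^{N+1}$ meets the nonzero subspace $\sM$ nontrivially, whence $\left(D^2_{T_N}\right)_\sM\neq0$ by \eqref{nol}; this recovers the implication (ii)$\Rightarrow$(i) directly, without recourse to the Schur parameters.
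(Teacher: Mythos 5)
Your proof is correct, but it follows a genuinely different route from the paper's. The paper argues by direct matrix computations: for (ii)$\Rightarrow$(i) it uses \eqref{nol} to note that $\left(D^2_{T_N}\right)_\sM=0$ forces $\ran D_{T_N}\cap\sM=\{0\}$, so $D^2_{T_N}$ cannot have full range; for (i)$\Rightarrow$(ii) it picks the minimal $m$ with $\det D^2_{T_m}\ne 0$ and $\det D^2_{T_{m+1}}=0$, writes $T_{m+1}$ in $2\times 2$ block form, factors $\det D^2_{T_{m+1}}=\det D^2_{T_m}\cdot s$ where the scalar $s$ is exactly the Schur complement, identifies $s$ with $\left(D^2_{T_{m+1}}\right)_\sM\uphar\sM$ via \eqref{shormat1}, and then propagates $\left(D^2_{T_{m+1}}\right)_\sM=0$ up to level $N$ by the monotonicity statement of Corollary \ref{zero}. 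You instead route both conditions through the Schur parameters: the scalar specialization of \eqref{Vazhno1} gives $\left(D^2_{T_N}\right)_\sM=\prod_{k=0}^{N}\bigl(1-|\Gamma_k|^2\bigr)P_\sM$, so (ii) is equivalent to $|\Gamma_r|=1$ for some $r\le N$, which by the quoted equivalence (ii)$\iff$(iv) of Theorem \ref{SchPr1} is equivalent to (i). Your approach is shorter and conceptually transparent — it exhibits both conditions as the vanishing of the same product $\prod(1-|\Gamma_k|^2)$ — but it leans on two imported ingredients: the validity of \eqref{Vazhno1} for Toeplitz matrices built from a Schur sequence (which the paper does assert just before Proposition \ref{unique1}) and the external result Theorem \ref{SchPr1} from \cite{BC}, which the paper states without proof. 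The paper's argument buys self-containedness: it is pure finite-dimensional linear algebra (Schur complements and determinants) plus its own Corollary \ref{zero}, with no appeal to the Schur algorithm. Your careful handling of the degenerate case $|\Gamma_r|=1$, $r<N$ (trivial defect spaces annihilating the product) is the right thing to check and is sound; and note that your closing ``independent check'' of (ii)$\Rightarrow$(i) via \eqref{nol} is, in contrapositive form, precisely the paper's own proof of that implication.
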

\begin{proof}
(ii)$\Rightarrow$(i). The equality $\left(D^2_{T_N}\right)_\sM=0$ is equivalent to $\sM\cap\ran D^2_{T_N}=\{0\}$. It follows
that $\ran(D^2_{T_N})\ne \sM^{N+1}$. Hence $\det D^2_{T_N}=0.$

Let us prove (i)$\Rightarrow$(ii). Let $m\le N-1$ is such that $\det D^2_{T_m}\ne 0$ and $\det D^2_{T_{m+1}}=0$.
The matrix $T_{m+1}$ takes the form
\[
T_{m+1}=\begin{bmatrix}C_0& 0\cr X_m&T_{m} \end{bmatrix},
\]
where
\[
X_m=\begin{bmatrix}C_1\cr \vdots\cr C_{m+1} \end{bmatrix}.
\]
Then
\[
D^2_{T_{m+1}}=\begin{bmatrix}1-C^*_0C_0 -X^*_mX_m& -X^*_m T_m\cr
-T^*_m X_m& D^2_{T_m} \end{bmatrix}.
\]
As is well known
\[
\det D^2_{T_{m+1}}=\det D^2_{T_m}(1-C^*_0C_0 -X^*_mX_m-X^*_mT_mD^{-2}_{T_m}T^*_mX_m)
\]
Since $\det D^2_{T_m}\ne 0$ and $\det D^2_{T_{m+1}}=0$, we get
\[
1-C^*_0C_0 -X^*_mX_m-X^*_mT_mD^{-2}_{T_m}T^*_mX_m=0.
\]
But
\[
1-C^*_0C_0
-X^*_mX_m-X^*_mT_mD^{-2}_{T_m}T^*_mX_m=\left(D^2_{T_{m+1}}\right)_\sM\uphar\sM.
\]
Thus $ \left(D^2_{T_{m+1}}\right)_\sM=0.$  From Corollary \ref{zero}
we obtain $\left(D^2_{T_{N}}\right)_\sM=0.$
\end{proof}
For a contraction $S\in\bL(\sH_1,\sH_2)$ define the M\"obius
transformation as follows
\[
\cM_S(X):=S+D_{S^*}X(I+ S^*X)^{-1}D_S,\; X\in\bL(\sD_S,\sD_{S^*}),
\;-1\in\rho(S^*X).
\]
Suppose that both subspaces $\sD_{\Gamma_N}$ and $\sD_{\Gamma^*_N}$
are non-trivial. Then all solutions to the Schur problem can be
described as follows.
 Let $W$ be an arbitrary function from
$\bS(\sD_{\Gamma_{N}},\sD_{\Gamma^*_{N}})$ . Then define for
$\lambda\in\dD$
\[
\begin{array}{l}
W_1(\lambda)=\cM_{\Gamma_{N}}(\lambda W(\lambda)),\;
W_2(\lambda)=\cM_{\Gamma_{N-1}}(\lambda W_1(\lambda)),\ldots,\\
\qquad W_{N+1}(\lambda)=\cM_{\Gamma_{0}}(\lambda W_{N}(\lambda)).
\end{array}
\]
Due to the Schur algorithm, the function $\Theta(\lambda)=W_{N+1}(\lambda)$ is a solution to the Schur problem.
We can write $\Theta$ as
\begin{equation}
\label{allsol}
\Theta(\lambda)=\cM_{\Gamma_0}\circ\cM_{\Gamma_1}\circ\cdots\circ\cM_{\Gamma_N}(\lambda
W).
\end{equation}
If $ G_0=W(0)\in\bL(\sD_{\Gamma_N},\sD_{\Gamma^*_N}),\;
G_1\in\bL(\sD_{G_0},\sD_{G^*_0}),\ldots $ are the Schur parameters
of $W$, then
\[
\Gamma_0,\Gamma_1,\ldots,\Gamma_N,G_0,G_1,\ldots
\]
are the Schur parameters of $\Theta$. This procedure, using the
Redhefer product, leads to the representation of all solutions by
means of fractional-linear transformation of $W$ \cite{BC, FoFr}. We
note also that all solutions to the Schur problem can be represented
as transfer functions of simple conservative systems having
block-operator CMV matrices \cite{ArlMFAT2009} constructed by means
of the choice sequence
$\Gamma_0,\Gamma_1,\ldots,\Gamma_N,G_0,G_1,\ldots.$

 Apart from $T_N$ we will consider
the operator
\[
\wt T_N=\begin{bmatrix}C^*_0&0&0&\ldots&0\cr
C^*_1&C^*_0&0&\ldots&0\cr \vdots&\vdots&\vdots&\vdots&\vdots\cr
C^*_N&C^*_{N-1}&C^*_{N-2}&\ldots&C^*_0\end{bmatrix}.
\]
Now we describe one step lifting of the Toeplitz matrix by means of
Kre\u\i n shorted operators $(\left(D^2_{T_N}\right)_\sM$ and
$\left(D^2_{\wt T_N}\right)_\sN.$

\begin{proposition}
\label{KrShortforT} The Kre\u in shorted operators
$\left(D^2_{T_N}\right)_\sM$ and $\left(D^2_{\wt T_N}\right)_\sN$
are of the forms
\begin{equation}
\label{frm1} \left(D^2_{T_N}\right)_\sM\uphar \sM=I-C^*_0C_0-
\left(D^{-1}_{T^*_{N-1}}\begin{bmatrix}C_1\cr \vdots\cr
C_N\end{bmatrix}\right)^* D^{-1}_{T^*_{N-1}}\begin{bmatrix}C_1\cr
\vdots\cr C_N\end{bmatrix}
\end{equation}
\begin{equation}
\label{frm2} \left(D^2_{\wt T_N}\right)_\sN\uphar\sN=I-C_0C^*_0-
\left(D^{-1}_{T_{N-1}}\begin{bmatrix}C^*_N\cr \vdots\cr
C^*_1\end{bmatrix}\right)^* D^{-1}_{T_{N-1}}\begin{bmatrix}C^*_N\cr
\vdots\cr C^*_1\end{bmatrix}.
\end{equation}
Here $D^{-1}_{T^*_{N-1}}$ and $D^{-1}_{T_{N-1}}$ are the
Moore--Penrose pseudo-inverses.
\end{proposition}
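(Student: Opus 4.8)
The plan is to peel the first column off $T_N$ and reduce everything to the $2\times 2$ block formula \eqref{shormat1} for the shorted operator. Grouping $\sM^{N+1}=\sM\oplus\sM^{N}$ (the first copy of $\sM$ together with the remaining $N$ copies) and likewise $\sN^{N+1}=\sN\oplus\sN^{N}$, a direct inspection of \eqref{trToe} shows that deleting the first row and the first column of $T_N$ leaves exactly $T_{N-1}$, so that
$$
T_N=\begin{bmatrix}C_0&0\cr b&T_{N-1}\end{bmatrix},\qquad b:=\begin{bmatrix}C_1\cr\vdots\cr C_N\end{bmatrix}\in\bL(\sM,\sN^{N}).
$$
Consequently
$$
D^2_{T_N}=I-T^*_NT_N=\begin{bmatrix}I-C^*_0C_0-b^*b&-b^*T_{N-1}\cr -T^*_{N-1}b&D^2_{T_{N-1}}\end{bmatrix}
$$
acting on $\sM\oplus\sM^{N}$. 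I would then apply \eqref{shormat1} with $\cK=\sM$, so that $S_{11}=I-C^*_0C_0-b^*b$, $S_{12}=-b^*T_{N-1}$, $S_{22}=D^2_{T_{N-1}}$ and $S_{22}^{1/2}=D_{T_{N-1}}$. Condition \eqref{POZ} is automatic since $D^2_{T_N}\ge 0$, and it guarantees $\ran S^*_{12}\subseteq\ran D_{T_{N-1}}$, so $S_{22}^{-1/2}S^*_{12}=-D^{-1}_{T_{N-1}}T^*_{N-1}b$ is a well-defined Moore--Penrose expression. This produces
$$
\left(D^2_{T_N}\right)_\sM\uphar\sM=I-C^*_0C_0-b^*b-b^*T_{N-1}D^{-2}_{T_{N-1}}T^*_{N-1}b.
$$

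The heart of the argument is then the identity
$$
b^*b+b^*T_{N-1}D^{-2}_{T_{N-1}}T^*_{N-1}b=\left(D^{-1}_{T^*_{N-1}}b\right)^*D^{-1}_{T^*_{N-1}}b,
$$
which converts the last display into \eqref{frm1}. For a strict contraction this is the resolvent identity $I+T_{N-1}(I-T^*_{N-1}T_{N-1})^{-1}T^*_{N-1}=I+(I-T_{N-1}T^*_{N-1})^{-1}T_{N-1}T^*_{N-1}=(I-T_{N-1}T^*_{N-1})^{-1}=D^{-2}_{T^*_{N-1}}$, where the middle step uses the intertwining relation $T_{N-1}D_{T_{N-1}}=D_{T^*_{N-1}}T_{N-1}$. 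The delicate point, and the step I expect to cost the most care, is that $T_{N-1}$ need not be a strict contraction, so the identity must be read through pseudo-inverses and checked only on the vectors $g=bh$ that actually occur. The lower diagonal block of $D^2_{T^*_N}=I-T_NT^*_N\ge 0$ equals $D^2_{T^*_{N-1}}-bb^*$, whence $bb^*\le D^2_{T^*_{N-1}}$; by the Douglas majorization lemma $\ran b\subseteq\ran D_{T^*_{N-1}}=\sD_{T^*_{N-1}}$, so every $g=bh$ may be written $g=D_{T^*_{N-1}}u$ with $u\in\sD_{T^*_{N-1}}$ and $D^{-1}_{T^*_{N-1}}g=u$.

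To finish the identity I would compare quadratic forms. Writing $T:=T_{N-1}$ and $g=D_{T^*}u$ with $u\perp\ker D_{T^*}$, the relation $T^*D_{T^*}=D_TT^*$ gives $T^*g=D_TT^*u$, hence $D^{-1}_TT^*g=P_{\sD_T}T^*u$, while $\|g\|^2=(D^2_{T^*}u,u)=\|u\|^2-\|T^*u\|^2$. Therefore
$$
\|g\|^2+\|D^{-1}_TT^*g\|^2=\|u\|^2-\|T^*u\|^2+\|P_{\sD_T}T^*u\|^2,
$$
and it remains to see $T^*u\in\sD_T$, i.e. $T^*u\perp\ker D_T$. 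This follows from $T(\ker D_T)\subseteq\ker D_{T^*}$ (if $D_Tx=0$ then $D_{T^*}Tx=TD_Tx=0$) together with $u\perp\ker D_{T^*}$, since for $x\in\ker D_T$ one has $(T^*u,x)=(u,Tx)=0$. Then $\|P_{\sD_T}T^*u\|=\|T^*u\|$ and the right-hand side collapses to $\|u\|^2=\|D^{-1}_{T^*}g\|^2$; interpreting these norms as the quadratic forms of $b^*b+b^*TD^{-2}_TT^*b$ and $b^*D^{-2}_{T^*}b$ at $h$ establishes the identity, and hence \eqref{frm1}.

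Finally, \eqref{frm2} follows by applying \eqref{frm1} to $\wt T_N=T_N(C^*_0,\dots,C^*_N)$ and transporting the answer by the self-adjoint unitary flip $J_{N-1}$ from the proof of Theorem \ref{Main44}. One checks directly that $J_{N-1}\wt T^*_{N-1}J_{N-1}=T_{N-1}$, whence $D^2_{\wt T^*_{N-1}}=J_{N-1}D^2_{T_{N-1}}J_{N-1}$ and $D^{-1}_{\wt T^*_{N-1}}=J_{N-1}D^{-1}_{T_{N-1}}J_{N-1}$. Since $J_{N-1}$ merely reverses the block order and is unitary, the vector $D^{-1}_{\wt T^*_{N-1}}\begin{bmatrix}C^*_1\cr\vdots\cr C^*_N\end{bmatrix}$ has the same norm as $D^{-1}_{T_{N-1}}\begin{bmatrix}C^*_N\cr\vdots\cr C^*_1\end{bmatrix}$, which is exactly the expression appearing in \eqref{frm2}.
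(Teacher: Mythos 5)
Your proof is correct, and it follows the paper's skeleton up to the decisive step: both you and the paper peel off the first block column, writing $T_N$ as a lower triangular $2\times 2$ block operator with $C_0$ in the corner, $b=B_N$ below it and $T_{N-1}$ in the remaining block, compute $D^2_{T_N}$ in this decomposition, and apply the Schur-complement formula \eqref{shormat1} with $\cK=\sM$. The divergence is in how the resulting expression is converted into \eqref{frm1}, i.e.\ in the identity
\begin{equation*}
b^*b+b^*T_{N-1}D^{-2}_{T_{N-1}}T^*_{N-1}b=\bigl(D^{-1}_{T^*_{N-1}}b\bigr)^*D^{-1}_{T^*_{N-1}}b .
\end{equation*}
The paper proves this by a spectral-calculus limit: it evaluates $\bigl((xI-T^*_{N-1}T_{N-1})^{-1}T^*_{N-1}B_Nf,\,T^*_{N-1}B_Nf\bigr)$ via the algebraic identity $T(xI-T^*T)^{-1}T^*=-I+x(xI-TT^*)^{-1}$ and lets $x\downarrow 1$, identifying both limits with the Moore--Penrose quantities; this is short and needs no preliminary range checks, since the monotone limits make sense in $[0,+\infty]$. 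You instead argue at the vector level: you extract $bb^*\le D^2_{T^*_{N-1}}$ from the positivity of $D^2_{T^*_N}$, invoke Douglas' lemma to get $\ran b\subseteq\ran D_{T^*_{N-1}}$, and then use the intertwining relation $T^*D_{T^*}=D_TT^*$ together with the kernel inclusion $T\ker D_T\subseteq\ker D_{T^*}$ to collapse the quadratic form to $\|u\|^2=\|D^{-1}_{T^*_{N-1}}bh\|^2$. This is more elementary (no resolvent limit) and has the merit of making explicit the range facts that justify the pseudo-inverse expressions, which the paper leaves implicit; your writing of the identity with $D^{-2}_{T_{N-1}}$ as an operator is informal, but you correctly read it as a quadratic form on the vectors $T^*_{N-1}bh$ that actually occur. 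For \eqref{frm2} you also take a genuinely different and arguably cleaner route: you apply \eqref{frm1} to $\wt T_N$ and transport the answer by the unitary flip, verifying $J_{N-1}\wt T^*_{N-1}J_{N-1}=T_{N-1}$ directly, whereas the paper decomposes $T^*_N$, computes $\bigl(D^2_{T^*_N}\bigr)_{\sN_N}$ by the same Schur-complement argument, and then appeals to \eqref{Vazhno2} and \eqref{auxil}; your version is self-contained, while the paper's establishes en route the identity \eqref{auxil} that it reuses elsewhere. There is no gap in your argument.
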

\begin{proof}
For $T_N$ we have block-matrix representation
\[
T_N=\begin{bmatrix} C_0 &0\cr
B_N&T_{N-1}\end{bmatrix}:\begin{array}{l}\sM\\\oplus\\\sM^{N}\end{array}\to
\begin{array}{l}\sN\\\oplus\\\sN^{N}\end{array},
\]
where $B_N=\begin{bmatrix} C_1 \cr\vdots\cr C_N\end{bmatrix}.$ It
follows that
\[
D^2_{T_N}=\begin{bmatrix}I-\sum\limits_{k=0}^N C^*_kC_k &-B^*_N
T_{N-1}\cr-T^*_{N-1} B_N&D^2_{T_{N-1}}\end{bmatrix}
\]
Due to \eqref{shormat1} one has
\[
\left(D^2_{T_N}\right)_\sM\uphar \sM=I-\sum\limits_{k=0}^N C^*_kC_k
-(D^{-1}_{T_{N-1}}T^*_{N-1}B_N)^*(D^{-1}_{T_{N-1}}T^*_{N-1}B_N).
\]
Since
\[
\lim\limits_{x\downarrow
1}((xI-T^*_{N-1}T_{N-1})^{-1}T^*_{N-1}B_Nf,T^*_{N-1}B_Nf)=||(D^{-1}_{T_{N-1}}T^*_{N-1}B_N)f||^2
,\; f\in\sM,
\]
and
\[
\begin{array}{l}
((xI-T^*_{N-1}T_{N-1})^{-1}T^*_{N-1}B_Nf,T^*_{N-1}B_Nf)\\
=-||B_Nf||^2+x||(xI-T^*_{N-1}T_{N-1})^{-1/2}B_Nf||^2,
\end{array}
\]
we obtain \eqref{frm1}.

The operator $T^*_N$ can be represented as follows
\[
T^*_N=\begin{bmatrix} T^*_{N-1}&\wh B_N \cr 0&C^*_0
\end{bmatrix}:\begin{array}{l}\sN^{N}\\\oplus\\\sN_N\end{array}\to
\begin{array}{l}\sM^N\\\oplus\\\sM_N\end{array},
\]
where $\wh B_N=\begin{bmatrix} C^*_N \cr\vdots\cr
C^*_1\end{bmatrix}$. Recall that
\[
\sM_N:=\underbrace{\{0\}\oplus\{0\}\oplus\cdots\oplus\{0\}}_N\oplus\sM,\;
\sN_N:=\underbrace{\{0\}\oplus\{0\}\oplus\cdots\oplus\{0\}}_N\oplus\sN.
\]
Then
\[
D^2_{T^*_N}=\begin{bmatrix}D^2_{T^*_{N-1}}&-T_{N-1}\wh B_N\cr -\wh
B^*_NT^*_{N-1}&I-\sum\limits_{k=0}^N C_kC^*_k
\end{bmatrix}.
\]
As above we obtain
\[
\left(D^2_{T^*_N}\right)_{\sN_{N}}\uphar\sN_{N}=I-C_0C^*_0-
\left(D^{-1}_{T_{N-1}}\wh B_N\right)^* D^{-1}_{T_{N-1}}\wh B_N.
\]
Therefore \eqref{frm2} follows from \eqref{Vazhno2} and
\eqref{auxil}.
\end{proof}

\begin{theorem}
\label{central} Let the data $C_0, C_1,\ldots, C_N\in\bL(\sM,\sN)$
be the Schur sequence. Then the formula
\begin{equation}
\label{centr} C_{N+1}= 
\dot{C}_{N+1} + \left(\left(D^2_{\wt
T_N}\right)_\sN\uphar\sN\right)^{1/2}
Y\left(\left(D^2_{T_N}\right)_\sM\uphar\sM\right)^{1/2},
\end{equation}
where
\begin{equation}
\label{centr1}
\dot{C}_{N+1}=-\left(
D^{-1}_{T_{N-1}}\begin{bmatrix}C^*_N\cr C^*_{N-1}\cr\vdots\cr
C^*_1\end{bmatrix}
\right)^*T^*_{N-1}
 D^{-1}_{T^*_{N-1}}\begin{bmatrix}C_1\cr C_2\cr\vdots\cr
C_N\end{bmatrix}
\end{equation}
and $Y$ is an arbitrary contraction from $\bL\left(\cran
\left((D^2_{ T_N})_\sM\right),\cran \left((D^2_{\wt
T_N})_\sN\right)\right),$
 describes all Schur sequences $\{C_0,\ldots, C_N,C_{N+1}\}$.
\end{theorem}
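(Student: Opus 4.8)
The plan is to exhibit $C_{N+1}$ as the free corner of a contractive $2\times2$ block operator matrix and to read the answer off from the parametrization in Theorem~\ref{ParContr1}. Writing $T_{N+1}=T_{N+1}(C_0,\dots,C_{N+1})$ as in \eqref{trToe}, I split the input space $\sM^{N+2}=\sM\oplus\hat\sM$ into the first copy of $\sM$ and the remaining $N+1$ copies $\hat\sM$, and the output space $\sN^{N+2}=\sN\oplus\hat\sN$ into the last copy of $\sN$ and the first $N+1$ copies $\hat\sN$. Since the entry $C_{N+1}$ sits in the lower-left corner of $T_{N+1}$, with respect to these splittings
\[
T_{N+1}=\begin{bmatrix}C_{N+1}&C\cr B&A\end{bmatrix}:\begin{array}{l}\sM\\\oplus\\\hat\sM\end{array}\to\begin{array}{l}\sN\\\oplus\\\hat\sN\end{array},
\]
where $B=\begin{bmatrix}C_0&C_1&\cdots&C_N\end{bmatrix}^{*}{}^{*}$ is the first column of $T_N$, $C=\begin{bmatrix}C_N&\cdots&C_1&C_0\end{bmatrix}$ is the last row of $T_{N+1}$ with the corner removed, and $A$ has vanishing first row and last column and central $N\times N$ block equal to $T_{N-1}$. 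Thus, after reordering coordinates, $A=T_{N-1}\oplus 0$, so $A$ is a contraction and $D_{A^*}^2=D^2_{T^*_{N-1}}\oplus I$, $D_A^2=D^2_{T_{N-1}}\oplus I$. By Theorem~\ref{ParContr1} (formula \eqref{twee}, with $\cH\to\hat\sM$, $\cK\to\hat\sN$) the matrix $T_{N+1}$ is a contraction if and only if $B=D_{A^*}M$, $C=KD_A$ and $C_{N+1}=-KA^*M+D_{K^*}YD_M$, where $M,K$ are uniquely determined and $Y\in\bL(\sD_M,\sD_{K^*})$ is an arbitrary contraction. Hence $Y\mapsto C_{N+1}$ is exactly the correspondence between contractions $Y$ and admissible last coefficients, and it remains to identify $D_M$, $D_{K^*}$ and $-KA^*M$ with the objects in \eqref{centr}--\eqref{centr1}.

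The next step is to compute the defect operators. From $B=D_{A^*}M$ and the uniqueness statement one has $M=D_{A^*}^{-1}B$ (Moore--Penrose pseudo-inverse), whence $D_M^2=I-M^*M=I-B^*(I-AA^*)^{-1}B$. Since $(I-AA^*)^{-1}=I_\sN\oplus D^{-2}_{T^*_{N-1}}$ acts on $\sN\oplus\hat\sN$ by decoupling the first coordinate (carrying $C_0$) from the rest (carrying $\begin{bmatrix}C_1&\cdots&C_N\end{bmatrix}^{*}{}^{*}$), feeding in $B$ gives
\[
D_M^2=I-C_0^*C_0-\left(D^{-1}_{T^*_{N-1}}\begin{bmatrix}C_1\cr \vdots\cr C_N\end{bmatrix}\right)^{*}D^{-1}_{T^*_{N-1}}\begin{bmatrix}C_1\cr \vdots\cr C_N\end{bmatrix},
\]
which is exactly $\left(D^2_{T_N}\right)_\sM\uphar\sM$ by \eqref{frm1}. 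The symmetric computation from $C=KD_A$, $K=CD_A^{-1}$ and $D_{K^*}^2=I-KK^*=I-C(I-A^*A)^{-1}C^*$ yields $D_{K^*}^2=\left(D^2_{\wt T_N}\right)_\sN\uphar\sN$ by \eqref{frm2}. Consequently $\sD_M=\cran\left(D^2_{T_N}\right)_\sM$ and $\sD_{K^*}=\cran\left(D^2_{\wt T_N}\right)_\sN$, the two square roots occurring in \eqref{centr} are precisely $D_M$ and $D_{K^*}$, and the domain and range requirements on $Y$ in \eqref{centr} coincide with those in Theorem~\ref{ParContr1}.

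Finally I would identify the central term. With $M=D_{A^*}^{-1}B$, $K=CD_A^{-1}$ and the intertwining $D_AA^*=A^*D_{A^*}$ (the relation $T^*D_{T^*}=D_TT^*$ of the Introduction applied to $T=A$), passing to pseudo-inverses gives $-KA^*M=-CA^*D_{A^*}^{-2}B$. Evaluating $CA^*$ and $D_{A^*}^{-2}B$ through the same coordinate splitting reduces this to $-\begin{bmatrix}C_N&\cdots&C_1\end{bmatrix}T^*_{N-1}D^{-2}_{T^*_{N-1}}\begin{bmatrix}C_1&\cdots&C_N\end{bmatrix}^{*}{}^{*}$, and the defect intertwining $D^{-1}_{T_{N-1}}T^*_{N-1}=T^*_{N-1}D^{-1}_{T^*_{N-1}}$ rewrites this as the right-hand side of \eqref{centr1}; thus $-KA^*M=\dot C_{N+1}$. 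Combining the three identifications turns $C_{N+1}=-KA^*M+D_{K^*}YD_M$ into \eqref{centr}, and the bijectivity in Theorem~\ref{ParContr1} guarantees that \eqref{centr} describes exactly the admissible $C_{N+1}$. The main obstacle is the careful bookkeeping of the Moore--Penrose pseudo-inverses across the decomposition $A=T_{N-1}\oplus0$: one must verify that taking pseudo-inverses commutes with the coordinate embeddings, so that $(I-AA^*)^{-1}$ and $(I-A^*A)^{-1}$ genuinely decouple as stated, and that the intertwining identities persist in pseudo-inverse form on the relevant ranges; the remaining manipulations are the Schur-complement computations already carried out in Proposition~\ref{KrShortforT}.
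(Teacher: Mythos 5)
Your proposal is correct and takes essentially the same route as the paper: the paper also isolates $C_{N+1}$ as the free corner of $T_{N+1}$ under the identical splitting (first input copy of $\sM$ versus the rest, last output copy of $\sN$ versus the rest, with the middle block $\begin{bmatrix}0&0\cr T_{N-1}&0\end{bmatrix}$, i.e.\ your $T_{N-1}\oplus 0$), applies the corner-completion parametrization of Theorem~\ref{ParContr1} with uniquely determined $U=D_{D^*}^{-1}B$, $V^*=D_D^{-1}C^*$ (your $M$, $K$), and identifies $D_U^2$, $D_{V^*}^2$ with the shorted operators via \eqref{frm1}--\eqref{frm2} and the cross term with $\dot C_{N+1}$. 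The only difference is cosmetic: you reorder coordinates so the corner sits in the upper-left and cite \eqref{twee} verbatim (checking the cross term through the intertwining $D^{-1}_{T_{N-1}}T^*_{N-1}=T^*_{N-1}D^{-1}_{T^*_{N-1}}$), whereas the paper keeps it in the lower-left and computes $-VD^*U$ by direct block multiplication.
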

\begin{proof} Represent the matrix
\[
T_{N+1}=\begin{bmatrix}C_0&0&0&\ldots&0&0\cr C_1&C_0&0&\ldots&0&0\cr
\vdots&\vdots&\vdots&\vdots&\vdots&\vdots\cr
C_N&C_{N-1}&C_{N-2}&\ldots&C_0&0 \cr
C_{N+1}&C_{N}&C_{N-1}&\ldots&C_1&C_0\end{bmatrix}\in\bL(\sM^{N+2},\sN^{N+2})
\]
 in the form
\[
T_{N+1}=\begin{bmatrix}B&D\cr A&C
\end{bmatrix}:\begin{array}{l}\sM\\\oplus\\\sM^{N+1}\end{array}\to\begin{array}{l}\sN^{N+1}\\\oplus\\\sN\end{array}
\]
with
\[
\begin{array}{l}
B=\begin{bmatrix}C_0\cr C_1\cr\vdots\cr C_N\end{bmatrix},\;
D=\begin{bmatrix}0&0&0&\ldots&0&0\cr C_0&0&0&\ldots&0&0\cr
\vdots&\vdots&\vdots&\vdots&\vdots&\vdots\cr
C_{N-1}&C_{N-2}&C_{N-3}&\ldots&C_0&0
\end{bmatrix},
\\
A=\begin{bmatrix} C_{N+1} \end{bmatrix},\; C=\begin{bmatrix}C_N&
C_{N-1}&\ldots&C_0  \end{bmatrix}.
\end{array}
\]
On the other hand
\[
D=\begin{bmatrix}0&0\cr T_{N-1}&0
\end{bmatrix}:\begin{array}{l}\sM^N\\\oplus\\\sM\end{array}\to\begin{array}{l}\sN\\\oplus\\\sN^N\end{array}.
\]
The operator $D$ is a contraction and
\[
D_D=\begin{bmatrix} D_{T_{N-1}}&0\cr 0&I
\end{bmatrix},\;D_{D^*}=\begin{bmatrix} I&0\cr 0&D_{T^*_{N-1}}
\end{bmatrix}.
\]
From Theorem \ref{ParContr1} it follows that $T_{N+1}$ is a
contraction if and only if $A$ is of the form (see Section
\ref{parcontmatr})
\[
A=-VD^*U+D_{V^*}YD_{U},
\]
where $C=VD_{D},$ $B=D_{D^*}U,$ $Y\in\bL(\sD_{U},\sD_{V^*}),$
$||Y||\le 1$. Thus,
\[
A=-(D^{-1}_{D}C^*)^*D^*D^{-1}_{D^*}B+D_{V^*}YD_{U}.
\]
We have
\[
V^*= D^{-1}_D C^*=
\begin{bmatrix}D^{-1}_{T_{N-1}}\begin{bmatrix}C^*_N\cr
C^*_{N-1}\cr\vdots\cr C^*_1\end{bmatrix}\cr  C^*_0
\end{bmatrix},\;U=D^{-1}_{D^*}B=\begin{bmatrix} C_0\cr
D^{-1}_{T^*_{N-1}}\begin{bmatrix}C_1\cr C_2\cr\vdots\cr
C_N\end{bmatrix}
\end{bmatrix}.
\]
Then
\begin{equation}
\label{dg} D^2_U=I-C^*
C_0-\left(D^{-1}_{T^*_{N-1}}\begin{bmatrix}C_1\cr C_2\cr\vdots\cr
C_N\end{bmatrix} \right)^*D^{-1}_{T^*_{N-1}}\begin{bmatrix}C_1\cr
C_2\cr\vdots\cr C_N\end{bmatrix},
\end{equation}
\begin{equation}
\label{df*} D^2_{V^*}=I-C_0C^*_0- \left(
D^{-1}_{T_{N-1}}\begin{bmatrix}C^*_N\cr C^*_{N-1}\cr\vdots\cr
C^*_1\end{bmatrix}\right)^* D^{-1}_{T_{N-1}}\begin{bmatrix}C^*_N\cr
C^*_{N-1}\cr\vdots\cr C^*_1\end{bmatrix},
\end{equation}
and
\[
\begin{array}{l}
-VD^*U=-\left(\begin{bmatrix}D^{-1}_{T_{N-1}}\begin{bmatrix}C^*_N\cr
C^*_{N-1}\cr\vdots\cr C^*_1\end{bmatrix}\cr C^*_0
\end{bmatrix} \right)^*D^*\begin{bmatrix} C_0\cr
D^{-1}_{T^*_{N-1}}\begin{bmatrix}C_1\cr C_2\cr\vdots\cr
C_N\end{bmatrix}
\end{bmatrix}\\
\qquad\qquad=-\left(
D^{-1}_{T_{N-1}}\begin{bmatrix}C^*_N\cr C^*_{N-1}\cr\vdots\cr
C^*_1\end{bmatrix}
\right)^*T^*_{N-1}
 D^{-1}_{T^*_{N-1}}\begin{bmatrix}C_1\cr C_2\cr\vdots\cr
C_N\end{bmatrix}.
\end{array}
\]
From \eqref{frm1} and \eqref{frm2} we get \eqref{centr1}.
\end{proof}
\begin{remark}
For finite dimensional $\sM$ and $\sN$ formulas \eqref{centr1},
\eqref{dg}, and \eqref{df*} can be found in \cite{DFK}.
\end{remark}
Define consequentially the operators $\dot{C}_{N+1},$
$\dot{C}_{N+2},$ $\ldots$ by means of \eqref{centr1} using
$\dot{T}_{N},$ $\dot{T}_{N+1}$, $\dots$. The solution
$$\Theta_0(\lambda)=\sum\limits_{k=0}^N \lambda^kC_k+
\sum\limits_{n=1}^\infty \lambda^{N+n} \dot{C}_{N+n}$$ of the Schur
problem with data $\{C_k\}_{k=0}^N$ is called the \textit{central
solution} \cite{DFK}.
\begin{theorem}
\label{crcent} Let the data $C_0, C_1,\ldots, C_N\in\bL(\sM,\sN)$ be
the Schur sequence. Then the following statements are equivalent:
\begin{enumerate}
\def\labelenumi{\rm (\roman{enumi})}
\item $\Theta$ is the central solution to the Schur problem,
\item $\left(D^2_{T_\Theta}\right)_\sM\uphar\sM=\left(D^2_{T_N}\right)_\sM\uphar\sM,$
\item $\left(D^2_{\wt T_\Theta}\right)_\sN\uphar\sN=\left(D^2_{\wt
T_N}\right)_\sN\uphar\sN.$
\end{enumerate}
\end{theorem}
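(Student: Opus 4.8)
The plan is to collapse all three conditions onto the single statement that the Schur parameters of the solution $\Theta$ satisfy $\Gamma_{N+1}=\Gamma_{N+2}=\cdots=0$, the trailing parameters in $\bL(\sD_{\Gamma_N},\sD_{\Gamma^*_N})$ all vanishing. The first step is to record the Schur-parameter description of the central solution. Comparing the one-step completion formula \eqref{centr} of Theorem \ref{central} with Constantinescu's formula \eqref{ConstForm}, both write $C_{N+1}$ as a fixed operator depending only on $C_0,\ldots,C_N$ plus a sandwiched free parameter ($Y$ in the first, $\Gamma_{N+1}$ in the second) that vanishes exactly when that parameter is $0$. Since the admissible $C_{N+1}$ depends bijectively on the free parameter, the two fixed operators coincide, $\dot C_{N+1}=\mathrm{formula}_{N+1}(\Gamma_0,\ldots,\Gamma_N)$, and hence $C_{N+1}=\dot C_{N+1}\iff\Gamma_{N+1}=0$. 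Iterating the choice $Y=0$ that defines $\dot C_{N+1},\dot C_{N+2},\ldots$ therefore produces the solution $\Theta_0$ whose Schur parameters are $\Gamma_0,\ldots,\Gamma_N,0,0,\ldots$; by the one-to-one correspondence of Theorem \ref{SchurAlg}, a solution is central if and only if all of its Schur parameters past index $N$ vanish.

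For (i)$\Rightarrow$(ii): if $\Theta=\Theta_0$ then $\Gamma_n=0$ for $n>N$, so $D_{\Gamma_k}=I$ and $D^2_{\Gamma_n}=I$ on $\sD_{\Gamma_N}$ for all $k,n>N$, and every factor of index larger than $N$ in the product of Theorem \ref{Main5} is the identity. Thus for all $n\ge N$,
\[
D_{\Gamma_0}\cdots D_{\Gamma_{n-1}}D^2_{\Gamma_n}D_{\Gamma_{n-1}}\cdots D_{\Gamma_0}=D_{\Gamma_0}\cdots D_{\Gamma_{N-1}}D^2_{\Gamma_N}D_{\Gamma_{N-1}}\cdots D_{\Gamma_0},
\]
and passing to the strong limit, together with Theorem \ref{Main44} and $T_{\Theta_0,N}=T_N$, gives $\left(D^2_{T_\Theta}\right)_\sM\uphar\sM=\left(D^2_{T_N}\right)_\sM\uphar\sM$, which is (ii).

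For (ii)$\Rightarrow$(i): by Proposition \ref{unique1} the sequence $\left\{\left(D^2_{T_{\Theta,k}}\right)_\sM\uphar\sM\right\}_{k\ge N}$ is non-increasing with strong limit $\left(D^2_{T_\Theta}\right)_\sM\uphar\sM$ (Theorem \ref{Main5} and \eqref{limrav}); its value at $k=N$ is $\left(D^2_{T_N}\right)_\sM\uphar\sM$, so hypothesis (ii) forces the whole tail to be constant, hence every two consecutive terms agree. Writing these terms through Theorem \ref{Main44} and subtracting, with $I-D^2_{\Gamma_{n+1}}=\Gamma^*_{n+1}\Gamma_{n+1}$, yields
\[
D_{\Gamma_0}\cdots D_{\Gamma_n}\Gamma^*_{n+1}\Gamma_{n+1}D_{\Gamma_n}\cdots D_{\Gamma_0}\uphar\sM=0,
\]
that is $\Gamma_{n+1}D_{\Gamma_n}\cdots D_{\Gamma_0}h=0$ for every $h\in\sM$ and every $n\ge N$. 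The key lemma here is $\cran\left(D_{\Gamma_n}\cdots D_{\Gamma_0}\uphar\sM\right)=\sD_{\Gamma_n}$, proved by induction from $\cran D_{\Gamma_0}=\sD_{\Gamma_0}$ using the continuity of each $D_{\Gamma_k}$ and $\cran D_{\Gamma_k}=\sD_{\Gamma_k}$. It follows that $\Gamma_{n+1}=0$ on $\sD_{\Gamma_n}$ for all $n\ge N$, and by the first paragraph $\Theta=\Theta_0$.

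Finally, (i)$\iff$(iii) follows by duality: $\wt\Theta$ has Schur parameters $\{\Gamma^*_n\}$, $\wt T_\Theta=T_{\wt\Theta}$, and $\wt T_N=T_N(C^*_0,\ldots,C^*_N)=T_{\wt\Theta,N}$; moreover $\Theta$ is central iff $\Gamma_n=0$ for $n>N$ iff $\Gamma^*_n=0$ for $n>N$ iff $\wt\Theta$ is central. Applying the equivalence (i)$\iff$(ii) already proved to $\wt\Theta$ in place of $\Theta$ turns it into ``(iii)$\iff$$\wt\Theta$ is central'', hence into (iii)$\iff$(i). Conceptually the main obstacle is the identification in the first paragraph of the central solution with the vanishing of the trailing Schur parameters; the remaining genuinely technical point is the density lemma $\cran\left(D_{\Gamma_n}\cdots D_{\Gamma_0}\uphar\sM\right)=\sD_{\Gamma_n}$ together with the careful bookkeeping of the nested defect subspaces on which the factors $D_{\Gamma_k}$ act, while everything else is routine.
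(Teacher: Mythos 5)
Your proof is correct, but it takes a genuinely different route from the paper's. The paper never needs the Schur-parameter description of the central solution inside the proof: it works directly with the free parameter $Y$ of the one-step formula \eqref{centr}, using Corollary \ref{iscois1} (applied to the block decomposition from the proof of Theorem \ref{central}) to get
\[
\left(D^2_{T_{N+1}}\right)_\sM=\left(\left(D^2_{T_N}\right)_\sM\right)^{1/2}D^2_Y\left(\left(D^2_{T_N}\right)_\sM\right)^{1/2}P_\sM,
\]
so that $Y=0$ preserves the shorted operator; induction gives \eqref{ravenstvo}, and \eqref{limrav} yields (i)$\Rightarrow$(ii),(iii), while in the converse direction the constancy of the non-increasing sequence forces $S^{1/2}Y^*YS^{1/2}=0$, hence $Y=0$, at every step. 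The fact that the Schur parameters of $\Theta_0$ are $\Gamma_0,\ldots,\Gamma_N,0,0,\ldots$ is derived in the paper only \emph{after} the theorem, as a byproduct of \eqref{ravenstvo} and \eqref{Vazhno1}. You invert this logic: you establish the parameter description first, by matching the ball \eqref{centr} against Constantinescu's formula \eqref{ConstForm}, and then run the monotone-collapse argument at the Schur-parameter level via Theorems \ref{Main44} and \ref{Main5}, your density lemma $\cran\left(D_{\Gamma_n}\cdots D_{\Gamma_0}\uphar\sM\right)=\sD_{\Gamma_n}$ playing the role that injectivity of $S^{1/2}$ on $\cran S$ plays in the paper. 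What your route buys is a transparent duality step for (i)$\iff$(iii) (since $\Gamma_n=0\iff\Gamma^*_n=0$) and an explicit telescoping identity $A_n^*\Gamma^*_{n+1}\Gamma_{n+1}A_n=0$, $A_n=D_{\Gamma_n}\cdots D_{\Gamma_0}$; the paper's version is shorter because the $Y$-parametrization already encodes the shorted operators. One step of yours deserves tightening: bijectivity of the two parametrizations of the admissible $C_{N+1}$ does not by itself imply that the two base points coincide --- two bijective parametrizations of a set by sandwiched contractions could a priori be centered differently. The correct one-line repair is that each base point is a center of symmetry of the set of admissible $C_{N+1}$ (replace the free parameter by its negative), and a bounded set has at most one center of symmetry, since two distinct centers would generate an unbounded orbit of translates; together with injectivity of $D_{\Gamma^*_0}\cdots D_{\Gamma^*_N}$ on $\sD_{\Gamma^*_N}$ and the dense range of $A_N\uphar\sM$ in $\sD_{\Gamma_N}$, this gives $\dot{C}_{N+1}={formula}_{N+1}(\Gamma_0,\ldots,\Gamma_N)$ and $C_{N+1}=\dot{C}_{N+1}\iff\Gamma_{N+1}=0$, after which the rest of your argument goes through as written.
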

\begin{proof}
Let $\Theta$ be a solution to the Schur problem,
$\Theta(\lambda)=\sum\limits_{k=0}^\infty\lambda^kC_k$. Then
$C_{N+1}$ is  given by \eqref{centr} with some contraction $Y$. For
corresponding Toeplitz operators $T_{N+1}$, $\wt T_{N+1}$ from
Proposition \ref{unique1} we obtain
\[\begin{array}{l}
\left(D^2_{T_{N+1}}\right)_\sM=\left(\left(D^2_{T_N}\right)_\sM\right)^{1/2}D^2_Y\left(\left(D^2_{T_N}\right)_\sM\right)^{1/2}P_\sM,\\
\left(D^2_{\wt T_{N+1}}\right)_\sN=\left(\left(D^2_{\wt
T_N}\right)_\sN\right)^{1/2}D^2_{Y^*}\left(\left(D^2_{\wt
T_N}\right)_\sN\right)^{1/2}P_\sN.
\end{array}
\]
Since $Y=0$ corresponds to $\dot{C}_{N+1}$, we get
\[
\left(D^2_{\dot{T}_{N+1}}\right)_\sM\uphar\sM=\left(D^2_{T_N}\right)_\sM\uphar\sM.
\]
Similarly
\[
\left(D^2_{\wt {\dot{T}}_{N+1}}\right)_\sN\uphar\sN=\left(D^2_{\wt
T_N}\right)_\sN\uphar\sN.
\]
By induction
\begin{equation}
\label{ravenstvo}
\left(D^2_{\dot{T}_{N+n}}\right)_\sM\uphar\sM=\left(D^2_{{T}_{N}}\right)_\sM\uphar\sM,\;
\left(D^2_{\wt{\dot{T}}_{N+n}}\right)_\sN\uphar\sN=\left(D^2_{\wt{T}_{N}}\right)_\sN\uphar\sN
\end{equation}
for each $n\ge 1$. Hence, if $\Theta=\Theta_0$ is the central
solution, then \eqref{ravenstvo} and \eqref{limrav} imply
\[
\left(D^2_{\dot{T}_{\Theta}}\right)_\sM\uphar\sM=\left(D^2_{\dot{T}_{N+1}}\right)_\sM\uphar\sM,\;
\left(D^2_{\wt
{\dot{T}}_{\Theta}}\right)_\sN\uphar\sN=\left(D^2_{\wt
T_N}\right)_\sN\uphar\sN.
\]
Similarly (iii)$\Rightarrow$ (i) and (ii)$\Rightarrow$ (i).
\end{proof}
 Thus, for the lower
 triangular Toeplitz matrix $T_{\Theta_0}$, corresponding to
 $\Theta_0,$ we obtain the following statement.
\begin{theorem}
 \label{entropy} Let the data $C_0, C_1,\ldots,
C_N\in\bL(\sM,\sN)$ be the Schur sequence. Then the central solution
${\Theta}_0 \in\bS(\sM,\sN)$ is a unique among other solutions
$\Theta$, satisfying
\[
\begin{array}{l}
\left(D^2_{T_{{\Theta}_0}}\right)_\sM\uphar\sM=\max\limits_{\Theta}
\left\{\left(D^2_{T_\Theta}\right)_\sM\uphar\sM\right\}\\
\qquad\iff \left(D^2_{\wt
T_{{\Theta}_0}}\right)_\sN\uphar\sN=\max\limits_{\Theta}
\left\{\left(D^2_{\wt T_\Theta}\right)_\sN\uphar\sN\right\}.
\end{array}
\]
\end{theorem}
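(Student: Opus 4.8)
The plan is to show that the operator $\left(D^2_{T_N}\right)_\sM\uphar\sM$, which is completely determined by the data $C_0,\dots,C_N$, is a universal upper bound (in the order of nonnegative self-adjoint operators) for $\left(D^2_{T_\Theta}\right)_\sM\uphar\sM$ over all solutions $\Theta$, and that this bound is attained precisely by the central solution. First I would fix an arbitrary solution $\Theta(\lambda)=\sum_{k\ge 0}\lambda^k C_k$ of the Schur problem. Its Taylor coefficients form an infinite Schur sequence whose truncation of length $N+1$ reproduces the data, so that $T_{\Theta,N}=T_N$ and hence $\left(D^2_{T_{\Theta,N}}\right)_\sM\uphar\sM=\left(D^2_{T_N}\right)_\sM\uphar\sM$. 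The monotonicity established in Proposition \ref{unique1} applies verbatim to each consecutive pair $T_{\Theta,k},T_{\Theta,k+1}$, since the argument there uses only that the larger matrix is a contraction, which holds for every truncation of a solution. Thus $\left\{\left(D^2_{T_{\Theta,k}}\right)_\sM\uphar\sM\right\}_{k\ge N}$ is non-increasing and bounded above by its initial term $\left(D^2_{T_N}\right)_\sM\uphar\sM$. Passing to the strong limit and invoking \eqref{limrav} yields
\[
\left(D^2_{T_\Theta}\right)_\sM\uphar\sM\le\left(D^2_{T_N}\right)_\sM\uphar\sM
\]
for every solution $\Theta$.

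Next I would identify when equality holds. By Theorem \ref{crcent} the equality $\left(D^2_{T_\Theta}\right)_\sM\uphar\sM=\left(D^2_{T_N}\right)_\sM\uphar\sM$ is equivalent to $\Theta$ being the central solution $\Theta_0$. Consequently $\Theta_0$ attains the upper bound, so the supremum over all solutions is a genuine maximum in the operator order, equal to $\left(D^2_{T_{\Theta_0}}\right)_\sM\uphar\sM=\left(D^2_{T_N}\right)_\sM\uphar\sM$; moreover any solution realizing this maximum must coincide with $\Theta_0$, which gives the asserted uniqueness.

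Finally, the $\sN$-version and the equivalence $\iff$ follow by symmetry. Replacing $\Theta$ by $\wt\Theta$, $T_N$ by $\wt T_N$, and $\sM$ by $\sN$ throughout, the same three steps show that $\Theta_0$ is the unique solution maximizing $\left(D^2_{\wt T_\Theta}\right)_\sN\uphar\sN$. Since Theorem \ref{crcent} characterizes $\Theta_0$ simultaneously by its condition (ii) (the $\sM$-maximality) and its condition (iii) (the $\sN$-maximality), the two maximality properties are each equivalent to $\Theta=\Theta_0$ and therefore equivalent to one another, which is precisely the stated equivalence.

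I do not anticipate a serious obstacle: all the analytic content is already packaged in the monotonicity of Proposition \ref{unique1}, the strong-limit identities \eqref{limrav}, and the central-solution characterization of Theorem \ref{crcent}. The one point deserving care is the interpretation of $\max$ as a maximum in the partial order of nonnegative self-adjoint operators, where a family need not in general possess a largest element; what legitimizes it here is that the upper bound $\left(D^2_{T_N}\right)_\sM\uphar\sM$ is itself a member of the family, being attained at $\Theta_0$, so the family does have a largest element rather than merely a supremum.
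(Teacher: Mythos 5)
Your proof is correct and follows essentially the same route as the paper, which obtains Theorem \ref{entropy} as an immediate consequence of Theorem \ref{crcent} combined with the monotonicity from Proposition \ref{unique1} and the limit relations \eqref{limrav}. You merely spell out these steps explicitly, including the worthwhile observation that the upper bound $\left(D^2_{T_N}\right)_\sM\uphar\sM$ is itself attained at $\Theta_0$, so the maximum in the operator order is genuinely a largest element and not just a supremum.
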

Note that the solution $\Theta_0$ is often called \textit{maximal
entropy solution} \cite{FoFr}. If the choice sequence of $T_N$ are
$\Gamma_0=C_0,\Gamma_1,\ldots, \Gamma_N$, then from \eqref{Vazhno1}
and Theorem \ref{crcent}
\[\begin{array}{l}
\left(D^2_{T_{\Theta_0}}\right)_{\sM}=\left(D^2_{T_{N}}\right)_{\sM}=D_{\Gamma_0}D_{\Gamma_1}\cdots
D_{\Gamma_{N-1}} D^2_{\Gamma_N} D_{\Gamma_{N-1}}\cdots
D_{\Gamma_1}D_{\Gamma_0}P_\sM,\\
\left(D^2_{\wt T_{\Theta_0}}\right)_{\sN}=\left(D^2_{\wt
T_{N}}\right)_{\sN}=D_{\Gamma^*_0}D_{\Gamma^*_1}\cdots
D_{\Gamma^*_{N-1}} D^2_{\Gamma^*_N} D_{\Gamma^*_{N-1}}\cdots
D_{\Gamma^*_1}D_{\Gamma^*_0}P_\sN.
\end{array}
\]
From \eqref{ravenstvo} and \eqref{Vazhno1} it follows that the Schur
parameters of $\Theta_0$ are operators
\[
\Gamma_0,\Gamma_1,\ldots, \Gamma_N,
0\in\bL(\sD_{\Gamma_N},\sD_{\Gamma^*_N}),
0\in\bL(\sD_{\Gamma_N},\sD_{\Gamma^*_N}),\ldots.
\]
The function $\Theta_0$ is also given by \eqref{allsol} with
$W(\lambda)=0,\; \lambda\in\dD$. Let
$\{\Theta_n\in\bS(\sD_{\Gamma_{n-1}}\sD_{\Gamma^*_{n-1}})\}_{n\ge
0}$ be functions associated with $\Theta_0$ in accordance with Schur
algorithm. Then
$\Theta_{N+1}=\Theta_{N+2}=\cdots=0\in\bS(\sD_{\Gamma_{N}},\sD_{\Gamma^*_{N}})$.
Let
\[
\tau_0=\left\{\begin{bmatrix}D&C\cr B& A\end{bmatrix}; \mathfrak
M,\mathfrak N,\mathfrak H\right\}
\]
be a simple conservative realization of the central solution
$\Theta_0$. Clearly, $D=C_0=\Gamma_0$. Then by Theorem
\ref{ITERATES11} the simple conservative systems
\[
\begin{array}{l}
\tau^{(k)}_{N+1}=\left\{\begin{bmatrix}0&D^{-1}_{\Gamma^*_{N}}
\cdots D^{-1}_{\Gamma^*_{0}}(CA^{N+1-k})\cr
A^k\left(D^{-1}_{\Gamma_{N}}\cdots D^{-1}_{\Gamma_{0}}
\left(B^*\uphar\sH_{N+1,0}\right)\right)^*&A_{N+1-k,k}\end{bmatrix};
\sD_{\Gamma_{N}},\sD_{\Gamma^*_{N}}, \sH_{N+1-k,k}\right\},\\
k=0,1,\ldots,N+1 \end{array}
\]
realize the function $\Theta_{N+1}=0$. Hence, the  unitarily
equivalent contractions $\left\{A_{N+1-k,k}\right\}_{k=0}^{N+1}$ are
orthogonal sums of unilateral shifts and co-shifts of multiplicities
$\dim\sD_{\Gamma_N}$ and $\dim\sD_{\Gamma^*_N}$, correspondingly
\cite{OAM2009}.
\subsection{Uniqueness solution to the Schur problem}
Here we are interested in the case of uniqueness of the solution to
the Schur problem. The following statement takes place.
\begin{theorem}
\label{OPUNI} Let the data $C_0,C_1,\ldots, C_N\in\bL(\sM,\sN)$ be
the Schur sequence. Then the following statements are equivalent
\begin{enumerate}
\def\labelenumi{\rm (\roman{enumi})}
\item the Schur problem has a unique solution;
\item either $(D^2_{T_N})_\sM=0$ or $(D^2_{\wt T_N})_\sN=0$;
\item either $\sM\cap \ran D_{T_N}=\{0\}$ or $\sN\cap \ran D_{\wt T_N}=\{0\}$.
\end{enumerate}
\end{theorem}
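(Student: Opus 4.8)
The plan is to reduce all three conditions to a single statement about the last Schur parameter $\Gamma_N$, namely that $\Gamma_N$ is an isometry or a co-isometry. First I would recall that, by Theorem~\ref{SchPr2} (or directly from the parametrization \eqref{allsol} of all solutions, whose free parameter $W$ runs over $\bS(\sD_{\Gamma_N},\sD_{\Gamma^*_N})$, a singleton precisely when one of those defect spaces is trivial), the solvable Schur problem has a unique solution exactly when one of $\Gamma_0,\dots,\Gamma_N$ is an isometry or a co-isometry. Because the Schur algorithm trivializes all subsequent defect spaces once one degenerates — if $\Gamma_r$ is an isometry for some $r\le N$ then $\sD_{\Gamma_r}=\{0\}$, hence $\sD_{\Gamma_N}=\{0\}$, and symmetrically for co-isometries — this condition is equivalent to: $\Gamma_N$ is an isometry or $\Gamma_N$ is a co-isometry. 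This settles the analysis of (i).

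Next I would connect (ii) to the same dichotomy. Equalities \eqref{Vazhno1} and \eqref{Vazhno2} remain valid for the Schur sequence (as noted before Proposition~\ref{unique1}), so evaluating the quadratic form of $(D^2_{T_N})_\sM$ on $h\in\sM$ and using selfadjointness of the defect operators gives
\[
\left(\left(D^2_{T_N}\right)_\sM h,h\right)=\left\|D_{\Gamma_N}D_{\Gamma_{N-1}}\cdots D_{\Gamma_0}h\right\|^2,\quad h\in\sM.
\]
Since $(D^2_{T_N})_\sM$ is nonnegative and annihilates $\sM^\perp$, it vanishes if and only if $D_{\Gamma_N}D_{\Gamma_{N-1}}\cdots D_{\Gamma_0}=0$; the analogous computation from \eqref{Vazhno2} shows $(D^2_{\wt T_N})_\sN=0$ iff $D_{\Gamma^*_N}D_{\Gamma^*_{N-1}}\cdots D_{\Gamma^*_0}=0$.

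The key technical step — the point I expect to be the main obstacle — is to show that $D_{\Gamma_N}\cdots D_{\Gamma_0}$ has dense range in $\sD_{\Gamma_N}$, so that it is the zero operator exactly when $\sD_{\Gamma_N}=\{0\}$, i.e. exactly when $\Gamma_N$ is an isometry. I would prove $\cran(D_{\Gamma_N}\cdots D_{\Gamma_0})=\sD_{\Gamma_N}$ by induction on $N$: the base case is $\cran D_{\Gamma_0}=\sD_{\Gamma_0}$ by definition, and the inductive step uses that $D_{\Gamma_k}$ is bounded with $\cran D_{\Gamma_k}=\sD_{\Gamma_k}$, together with the elementary fact that for a bounded operator $R$ and a set $\cL$ with $\overline{\cL}=\sD_{\Gamma_{k-1}}$ one has $\overline{R\cL}=\overline{R\,\sD_{\Gamma_{k-1}}}$. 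Consequently $(D^2_{T_N})_\sM=0\iff\Gamma_N$ is an isometry and $(D^2_{\wt T_N})_\sN=0\iff\Gamma_N$ is a co-isometry, so that (ii) is equivalent to ``$\Gamma_N$ is an isometry or a co-isometry'', matching the reformulation of (i). This yields (i)$\iff$(ii).

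Finally, (ii)$\iff$(iii) is immediate from \eqref{nol}: taking $S=D^2_{T_N}$ (so $S^{1/2}=D_{T_N}$) and $\cK=\sM$ gives $(D^2_{T_N})_\sM=0\iff\ran D_{T_N}\cap\sM=\{0\}$, and likewise $(D^2_{\wt T_N})_\sN=0\iff\ran D_{\wt T_N}\cap\sN=\{0\}$. Combining the three equivalences completes the proof.
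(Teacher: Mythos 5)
Your proof is correct, and it follows the same skeleton as the paper's second proof — Theorem \ref{SchPr2} combined with the shorted-operator formulas \eqref{Vazhno1}--\eqref{Vazhno2} and the criterion \eqref{nol} — but the mechanics differ in an instructive way. The paper argues through a minimal index: if uniqueness holds, some $\Gamma_p$ is isometric (or co-isometric), whence $(D^2_{T_p})_\sM=0$ by Theorem \ref{Main44}, and Corollary \ref{zero} (the monotonicity of the shorted operators along the sequence $T_k$) pushes this forward to $(D^2_{T_N})_\sM=0$; conversely it picks the least $p$ with $(D^2_{T_p})_\sM=0$ and asserts that $\Gamma_p$ is then an isometry. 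You instead collapse everything onto the last parameter, showing $(D^2_{T_N})_\sM=0\iff D_{\Gamma_N}\cdots D_{\Gamma_0}=0\iff \sD_{\Gamma_N}=\{0\}$ via your induction $\cran\left(D_{\Gamma_N}\cdots D_{\Gamma_0}\right)=\sD_{\Gamma_N}$. That dense-range lemma is precisely the fact the paper leaves implicit in the converse step of its second proof (to conclude that $D_{\Gamma_p}$ annihilates all of $\sD_{\Gamma_{p-1}}$ one needs $\ran\left(D_{\Gamma_{p-1}}\cdots D_{\Gamma_0}\right)$ dense there), so your version is, if anything, more self-contained; the only care required is the convention about vacuous isometries on trivial defect spaces when some $\Gamma_r$ with $r<N$ degenerates, which matches the paper's own convention $\Gamma_n=0\in\bL(0,\sD_{\Gamma^*_N})$. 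Note also that the paper offers a shorter route you did not take: its first proof reads uniqueness directly off the one-step parametrization \eqref{centr}, where the freedom in $C_{N+1}$ is the term $\bigl(\left(D^2_{\wt T_N}\right)_\sN\uphar\sN\bigr)^{1/2}Y\bigl(\left(D^2_{T_N}\right)_\sM\uphar\sM\bigr)^{1/2}$, so the solution is unique exactly when one of the two shorted operators vanishes — bypassing Theorem \ref{SchPr2} and the Schur parameters altogether.
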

\begin{proof}
We give two proves of the theorem.

\textit{The first proof}. The equivalence of (ii) and (iii) follows
from \eqref{nol}. Let the Schur problem has a unique solution
$\wh\Theta(\lambda)=\sum\limits_{k=0}^N \lambda^kC_k+
\sum\limits_{n=1}^\infty \lambda^{N+n} \wh{C}_{N+n}$. Because
$\{C_0,\ldots, C_N,\wh C_{N+1}\}$ is the Schur sequence, from
\eqref{centr} it follows that $\wh C_{N+1}=\dot{C}_N$ and either
$(D^2_{T_N})_\sM=0$ or $(D^2_{\wt T_N})_\sN=0$. So, (i) implies
(ii). In particular, ewe get that $\wh\Theta=\Theta_0$.

If (ii) holds true, then again from \eqref{centr} we get that
$\Theta_0$ is a unique solution of the Schur problem.

\textit{The second proof}.
 The matrix $T_N$ defines a
sequence of contractions (the choice sequence)
$$\Gamma_0(=C_0),\;\Gamma_1\in\bL(\sD_{\Gamma_0},\sD_{\Gamma^*_0}),\ldots,
\Gamma_N\in\bL(\sD_{\Gamma_{N-1}},\sD_{\Gamma^*_{N-1}}).$$ Suppose
that the Schur problem has a unique solution. Then by Theorem
\ref{SchPr2} one of $\Gamma's$ is an isometry or co-isometry. Assume
$\Gamma_p$ is isometry, where $p\le N$. From Theorem \ref{Main44} it
follows that $(D^2_{T_p})_\sM=0$. Corollary \ref{zero} yields the
equality $(D^2_{T_N})_\sM=0$. If we assume that $\Gamma^*_p$ is
isometry, then similarly we get $(D^2_{\wt T_N})_\sN=0$.

Now suppose $(D^2_{T_N})_\sM=0$. Let $p\le N$ is such that
$(D^2_{T_p})_\sM=0$, but $(D^2_{T_{p-1}})_\sM\ne 0$. Note that in
this case $\sD_{\Gamma_{p-1}}\ne \{0\}$, $\Gamma_p$ is isometry,
$$\sD_{\Gamma_{p}}=\sD_{\Gamma_{p+1}}=\cdots=\sD_{\Gamma_{N-1}}=\{0\},\; \Gamma_{p+1}=\cdots=\Gamma_N=0.$$
It follows that the solution to the Schur problem is unique and is
of the form
\[
\Theta(\lambda)=\cM_{\Gamma_0}\circ\cM_{\Gamma_1}\circ\cdots\circ\cM_{\Gamma_{p-1}}(\lambda\Gamma_p),\;
\lambda\in\dD.
\]
Similarly, the equality $(D^2_{\wt T_N})_\sN=0$ implies the
uniqueness. Thus $(i)\iff (ii)$.
\end{proof}
Observe that $(D^2_{\wt T_N})_\sN=0$ $\iff$
$(D^2_{T^*_N})_{\sN_N}=0$ (see\eqref{auxil}).
\begin{remark}
\label{AAKpap} V.M.~Adamyan, D.Z.~Arov, and M.G.~Kre\u\i n  in
\cite{AAK1} considered the following generalized Nehari--
Carath\'eodory--Fej\'{e}r problem: given a sequence of complex
numbers $\{\gamma_k\}_{1}^\infty,$ find a function $f\in
L_\infty(\dT)$ with principal part $\sum_{k=1}^\infty
\gamma_k\zeta^{-k}$ and with minimal $L_\infty$-norm. By Hehari's
theorem  \cite{Nehari} this problem has a solution if and only if
the Hankel matrix  $\Gamma=||\gamma_{j+k-1}||$ is bounded in $l_2$.
A criteria of the uniqueness solution is established in the form
\cite[Theorem 2.1]{AAK1}
\begin{equation}
\label{b} \lim\limits_{\rho\downarrow ||\Gamma||}\left((\rho^2
I-\Gamma^*\Gamma)^{-1}\vec e ,\vec e\right)=\infty,
\end{equation}
for the vector $\vec e=(1,0,0,\ldots)\in l_2.$ Because
\[
 \lim\limits_{x\uparrow 0}\,\left((B- xI)^{-1}g,g\right)=\left\{
\begin{array}{ll}
    \| B^{-1/2}g\|^2, & g\in\ran B^{1/2},\\
    +\infty,           & g\notin\ran B^{1/2}
\end{array}\right.
\]
for an arbitrary nonnegative selfadjoint operator $B$ ($B^{-1/2}$ is
the Moore-Penrose pseudo-inverse), equality \eqref{b} means that
$$\vec e\notin\ran\left(s^2
I-\Gamma^*\Gamma\right),$$ where $s=||\Gamma||$. Then by \eqref{nol}
one has that \eqref{b} is equivalent to the equality
$$\left(s^2 I-\Gamma^*\Gamma\right)_E=0,$$
where $E=\{\lambda \vec e,\;\lambda\in\dC\}$. The results of
\cite{AAK1} have been extended to the case of operator-valued
 functions in the paper \cite{AAK2} (see also \cite{Peller}).
 The corresponding uniqueness criteria \cite[Theorem 1.3]{AAK2} also takes the limit form similar to the scalar case.
As has been mentioned in Introduction the Schur problem can be
reduced to the above problem and the matrix $s^2 I-\Gamma^*\Gamma$
can be reduced to the square of the defect operator for a lower
triangular Toeplitz matrix.
\end{remark}

\end{document}